\newcommand{\D}{{\rm D}}
\newcommand{\trm}{\operatorname{Trace}_m}
\newcommand{\Rr}{\operatorname{R}}
\newcommand{\ee}{\mathbf{e}}
\newcommand{\vv}{\mathbf{v}}
 \newcommand{\Div}{\operatorname{div}}
 \newcommand{\HH}{\mathcal{H}}
 \newcommand{\LL}{\mathcal{L}}
 \newcommand{\spt}{\operatorname{spt}}
 \newcommand{\RR}{\mathbf{R}}  % reals
 \newcommand{\BB}{\mathbf{B}}  %ball
 \newcommand{\dist}{\operatorname{dist}}
 \newcommand{\area}{\operatorname{area}}
 \newcommand{\eps}{\epsilon}
 \newcommand{\Tan}{\operatorname{Tan}}
\newcommand{\grad}{\nabla}
 \newcommand{\xx}{\mathbf{x}}
 \newcommand{\yy}{\mathbf{y}}
\def\begfig {
\begin{figure}
\small }
\def\endfig {
\normalsiue
\end{figure}
}
    \newtheorem{theorem}    {Theorem}       [section]
    \newtheorem{lemma}      [theorem]       {Lemma}
    \newtheorem{corollary}  [theorem]     {Corollary}
    \newtheorem{proposition}       [theorem]       {Proposition}
    \newtheorem*{theorem*}{Theorem}
    \theoremstyle{definition}
    \newtheorem{definition}  [theorem] {Definition}
    \theoremstyle{definition}
    \newtheorem{remark}   [theorem]       {Remark}
    \newtheorem{a-counterexample} [theorem] {A counterexample}
\begin{document}

\renewcommand{\thesubsection}{\thetheorem}
   % this is so subsections and theorems, etc will be
   % numbered together.

\title[Controlling Area Blow-up]{Controlling Area Blow-up in 
Minimal\\ or Bounded Mean Curvature Varieties}
\author{Brian White}
\address{Department of Mathematics, Stanford University, Stanford, CA 94305}
\thanks{This research was supported by the National Science Foundation
  under grants~DMS-0406209, DMS~1105330, and DMS~1404282}
\email{bcwhite@stanford.edu}
\subjclass[2010]{Primary: 53A10; Secondary: 49Q05, 49Q20}
\keywords{Minimal surface, varifold, area bounds}
\date{November 27, 2012. Revised June 24, 2015.}

\begin{abstract}
Consider a sequence of minimal varieties $M_i$ in a Riemannian manifold $N$ such that
the measures of the boundaries are uniformly bounded on compact sets.  Let $Z$ be the set
of points at which the areas of the $M_i$ blow-up.  We prove that $Z$ behaves in some
ways like a minimal variety without boundary.  In particular, it satisfies the same maximum and barrier principles
that a smooth minimal  submanifold satisfies.   
For suitable open subsets $W$ of $N$, this allows one to show that if the areas of the $M_i$ are 
uniformly bounded on compact subsets of $W$, then the areas are in fact uniformly bounded on
all compact subsets of $N$.   Similar results are proved for varieties with bounded mean curvature.
The results about area blow-up sets are used to show that
the Allard Regularity Theorems can be applied in some 
situations where key hypotheses appear to be missing. 
In particular,
we prove a version of the Allard Boundary Regularity Theorem that does not require
any area bounds.  For example, we prove that if a sequence of smooth minimal submanifolds converge
as sets to a subset of a smooth, connected, properly embedded manifold 
with nonempty boundary, and if the convergence of the
boundaries is smooth, then the convergence is smooth everywhere.
\end{abstract}

\maketitle

\tableofcontents

\section{Introduction}
If $M$ is an embedded, $m$-dimensional manifold-with-boundary in a Riemannian manifold $\Omega$ and if $U$
is a subset of $\Omega$, then $|M|(U)$ and $|\partial M|(U)$ will denote
    the $m$-dimensional area of $M\cap U$ and the $(m-1)$-dimensional area of $(\partial M)\cap U$.
    For most of this article, the reader is not assumed to have any knowledge of varifolds, but
    for readers who do have such knowledge, if $M$
    an $m$-dimensional varifold, then $|M|(U)$ denotes
    the mass of $M$ in $U$ (written $\|M\|(U)$ in~\cite{allard-first-variation} 
    and $\mu_M(U)$ in~\cite{simon-book}),
    and $|\partial M|(U)$ denotes the generalized boundary measure of $M$ applied to
    the set $U$. (In~\cite{allard-first-variation}, $|\partial M|(U)$ is written $\|\delta M\|_{\rm sing}(U)$.)

Let $M_i$ be a sequence of $m$-dimensional minimal varieties in a Riemannian manifold $\Omega$,
or, more generally, varieties with mean curvature bounded by some $h<\infty$.  
Even more generally, the hypothesis that the mean curvature is bounded above by $h$ can be replaced
by the hypothesis that not ``too much'' of $M_i$ has mean curvature $>h$, i.e., that
\[
  \limsup_{i\to\infty} \int_{M_i\cap K} (|H|- h)^+\,dA < \infty
\]
for every compact subset $K$ of $\Omega$, where $H$ is the mean curvature vector and
where $t^+$ denotes the positive part of $t$ (that is, $t^+ = \max\{t, 0\}$).
 We suppose that 
the boundaries have uniformly bounded measure in compact sets $K$:
\[
    \limsup_{i\to\infty} |\partial M_i| (K)  < \infty.
\]
Let $Z$ be the set of points at which the areas of the $M_i$ blow up:
\[
  Z = \{ x\in \Omega: \text{ $\limsup_i |M_i|(\BB(x,r) = \infty$ for every $r>0$} \}
\]
Equivalently, $Z$ is the smallest closed subset of $\Omega$ such that the areas of the $M_i$
are uniformly bounded as $i\to\infty$ on compact subsets of $\Omega\setminus Z$.

It is useful to have natural conditions that imply that $Z$ is empty, since if $Z$ is empty, then
the areas of the $M_i$ are uniformly bounded on all compact subsets of $\Omega$
and thus (for example) a subsequence of the $M_i$ will converge as varifolds to a limit varifold
of locally bounded first variation.
This paper gives some such conditions.  It also gives some properties shared by every such area blowup set $Z$.

First we prove that every such set $Z$ satisfies the following maximum principle:

\begin{theorem}[Maximum Principle, \S\ref{main-theorem}]\label{intro-maximum-principle}
 If $f:\Omega\to \RR$ is a ${\rm C}^2$ function and if $f|Z$ has a local maximum at $p$,
then 
\[
    \trm(\D^2f(p)) \le h\, |\D f(p)|,
\]
 where $\trm(\D^2f(p))$ is the sum of the $m$ lowest eigenvalues
of the Hessian of $f$ at $p$.
\end{theorem}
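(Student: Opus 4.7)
The plan is to argue by contradiction, using the first variation formula against a carefully chosen compactly supported test vector field. Suppose $\trm(D^2 f(p)) > h|Df(p)|$; after subtracting a constant we may assume $f(p)=0$. The hypothesis supplies $r_0>0$ with $f\le 0$ on $Z\cap \BB(p,r_0)$, and implicitly $p\in Z$. Setting $\tau := \trm(D^2 f(p)) - h|\nabla f(p)| > 0$, shrink $r_0$ so that $\trm(D^2 f) - h|\nabla f|\ge \tau/2$ throughout $\BB(p,r_0)$.

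Introduce the penalized function $g(x) := f(x)+\alpha - \gamma\, d(x,p)^2$, with $\gamma>0$ small enough that $\trm(D^2 g) - h|\nabla g|\ge c := \tau/4$ on $\BB(p,r_0)$ (feasible by continuity and the fact that $D^2(d(\cdot,p)^2)\approx 2\,\mathrm{Id}$ near $p$), and then with $\alpha,\epsilon>0$ small enough that $\sqrt{(\alpha+\epsilon)/\gamma} < r_0/4$. Then $g(p)=\alpha>0$, while on $Z\cap \BB(p,r_0)$ we have $g\le \alpha - \gamma d(x,p)^2 \le -\epsilon$ whenever $d(x,p)\ge r_0/4$, giving the key inclusion
\[
    \{g>-\epsilon\}\cap Z\cap \BB(p,r_0) \;\subseteq\; \BB(p,r_0/4).
\]

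Choose smooth cutoffs $\chi_1:\RR\to[0,1]$ non-decreasing with $\chi_1\equiv 0$ on $(-\infty,-\epsilon]$ and $\chi_1\equiv 1$ on $[0,\infty)$; and $\chi_2\in C^\infty_c(\Omega,[0,1])$ with $\chi_2\equiv 1$ on $\BB(p,r_0/2)$ and $\spt\chi_2\subseteq \BB(p,r_0)$. Set $\chi := \chi_1(g)\,\chi_2$ and apply the first variation formula to $M_i$ with $X:=\chi\nabla g$. Expanding $\Div_{M_i}(\chi\nabla g)$ into three pieces and using the pointwise bound
\[
    \trace(D^2 g|_{TM_i}) + \vec H\cdot \nabla g \;\ge\; \trm(D^2 g) - h|\nabla g| \;\ge\; c
\]
together with non-negativity of $\chi_1'(g)\chi_2|\nabla^{M_i} g|^2$ yields
\[
    c\int_{M_i}\chi \;\le\; \int_{\partial M_i}\chi\,|\nabla g| \;+\; \Bigl|\int_{M_i}\chi_1(g)\,\nabla^{M_i}\chi_2\cdot\nabla g\Bigr| + O(1),
\]
where the $O(1)$ term absorbs the contribution of $\int(|\vec H|-h)^+\,dA$ in the general essentially-bounded-mean-curvature hypothesis.

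The crux is that $\spt(\chi_1(g)\nabla\chi_2)\subseteq \{g>-\epsilon\}\cap(\BB(p,r_0)\setminus \BB(p,r_0/2))$ is a compact subset of $\Omega\setminus Z$ by the key inclusion, so $|M_i|$ there is uniformly bounded and the error integral is $O(1)$; the boundary integral is uniformly bounded by the hypothesis on $|\partial M_i|$. Thus $\int_{M_i}\chi \le C/c$ uniformly in $i$. Since $\chi(p)=1$, continuity gives some $\BB(p,\rho)$ on which $\chi \ge 1/2$, so $|M_i|(\BB(p,\rho))\le 2C/c$ uniformly, contradicting $p\in Z$. The main obstacle is the simultaneous tuning of parameters: $\gamma$ must be small to keep $g$ "$m$-subharmonic on the $M_i$", yet large enough that $\gamma\,d(x,p)^2$ overwhelms the sign of $f$ on $Z$ away from $p$; the local-maximum hypothesis on $f|Z$ is exactly what makes these competing requirements simultaneously satisfiable.
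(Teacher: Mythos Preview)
Your proof is correct and follows essentially the same strategy as the paper's: argue by contradiction, apply the first variation formula to a vector field of the form (nonnegative localizing factor)\,$\times\,\nabla(\text{test function})$, use $\trm(D^2 g)\le\trace(D^2g|_{T M_i})$ for the main lower bound, and control the error term by observing that its support is a compact set disjoint from $Z$ (hence has uniformly bounded $|M_i|$-measure). The paper packages the localization differently: rather than subtracting $\gamma\,d(\cdot,p)^2$ and multiplying by cutoffs $\chi_1(g)\chi_2$, it first invokes a preparatory lemma to replace $f$ by a function whose superlevel sets are compact and which attains a strict global maximum on $Z$ at $p$, and then takes $X=f\,\nabla f=\nabla(\tfrac12 f^2)$ on the region $\{f\ge 0\}$; the rank-one term $Df^T Df$ in $D^2(\tfrac12 f^2)$ plays exactly the role of your nonnegative term $\chi_1'(g)\chi_2|\nabla^{M_i}g|^2$. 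Your direct penalization avoids that lemma at the cost of a slightly more delicate parameter tuning, but the two arguments are really the same proof in different clothing.
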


A closed set $Z$ that satisfies the 
conclusion of Theorem~\ref{intro-maximum-principle} will be called an $(m,h)$ set.
The concept of an $(m,h)$ set can be regarded as a  generalization of the concept
of an $m$-dimensional, properly embedded submanifold without boundary and with mean curvature bounded
by $h$.  In particular, if $M$ is a smooth, properly embedded, $m$-dimensional submanifold without boundary,
then $M$ is an $(m,h)$ set if and only if its mean curvature is bounded by $h$.
(See Corollary~\ref{main-theorem-examples-corollary} and Theorem~\ref{barrier-theorem}.)

We also prove that any $(m,h)$ set $Z$ satisfies the same barrier principle that is satisfied by $m$-dimensional
submanifolds of mean curvature bounded by $h$:

\begin{theorem}[Barrier Principle, \S\ref{barrier-theorem}]\label{intro-barrier-theorem}
Let $\Omega$ be a ${\rm C}^1$ Riemannian manifold without boundary,
and let $Z$ be
an $(m,h)$ subset of $\Omega$.
Let $N$ be a closed region in $\Omega$ with smooth boundary such that 
    $Z\subset N$,
and let $p\in Z\cap \partial N$.  Then
\[
  \kappa_1 + \dots + \kappa_m \le h
\]
where $\kappa_1\le \kappa_2 \le \dots \le \kappa_{n-1}$ are the principal curvatures
of $\partial N$ at $p$ with respect to the unit normal that points into $N$.
\end{theorem}

The converse is also true.  (See Theorem~\ref{converse-theorem}.)

In the case $\dim(N)=m+1$, there is a also
a strong barrier principle (Theorem~\ref{strong-barrier-theorem}): in the 
notation of Theorem~\ref{intro-barrier-theorem}, if $\dim(N)=m+1$,
if the mean curvature of $\partial N$ is everywhere greater than or
equal to $h$ (with respect to the normal that points into $N$), and if $Z\subset N$ is an
$(m,h)$ set that touches $\partial N$ at a point $p$, then $Z$ contains the entire 
connected component of  $\partial N$ containing $p$.

The support of every $m$-dimensional varifold with mean curvature bounded by $h$ is 
an $(m,h)$ set. (See Corollary~\ref{main-theorem-examples-corollary}.)
Thus Theorem~\ref{intro-barrier-theorem}
includes as a special case the barrier principle for varifolds proved in~\cite{White-maximum}.

The following theorem allows one to conclude in some circumstances that $Z$ is empty:

\begin{theorem}[Constancy Theorem, \S\ref{constancy-theorem}]
Suppose that an $(m,h)$ set $Z$ is a subset of a connected, ${\rm C}^1$ properly embedded, $m$-dimensional 
submanifold $M$ of the ambient space $\Omega$. Then $Z=\emptyset$ or $Z=M$.  In other
words, the characteristic function of $Z$ is constant on $M$.
\end{theorem}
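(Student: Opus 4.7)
The strategy is a blow-up argument. Since $Z$ is closed in $\Omega$ and $Z\subset M$, $Z$ is closed in $M$; by connectedness of $M$ it suffices to show $Z$ is also open in $M$. Suppose, for contradiction, that some $p\in Z$ is a boundary point of $Z$ in $M$, and pick $q_i\in M\setminus Z$ with $q_i\to p$. Let $r_i=\dist(q_i,Z)>0$ and let $p_i\in Z$ be a closest point, so $|p_i-q_i|=r_i$; then $p_i\to p$ and $r_i\to 0$. Writing $u_i=(q_i-p_i)/r_i$, the ball-exclusion $|x-q_i|\ge r_i$ for $x\in Z$ gives, after expanding the square,
\[
\langle u_i,\,x-p_i\rangle\le \frac{|x-p_i|^2}{2r_i}.
\]

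Perform a blow-up: in normal coordinates at $p$, set $\tilde Z_i=(Z-p_i)/r_i\subset T_p\Omega$. A direct change-of-variables on $C^2$ test functions shows that $\tilde Z_i$ is an $(m,hr_i)$-set. After passing to a subsequence, $\tilde Z_i$ converges locally in the Hausdorff metric to a closed set $Z_\infty\subset T_p\Omega$, and $u_i\to u$ with $|u|=1$. Two facts apply. \textbf{(i)} The maximum-principle condition is stable under Hausdorff convergence of closed sets combined with $hr_i\to 0$: given a $C^2$ test function $f$ with $f|_{Z_\infty}$ having a strict global maximum at $p_\infty$ on a closed ball, the maximizers of $f|_{\tilde Z_i}$ on that ball converge to $p_\infty$, and applying $(m,hr_i)$ and letting $i\to\infty$ yields $\trm(D^2f(p_\infty))\le 0$; the general local-maximum case reduces to this one by subtracting a strictly concave bump. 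Thus $Z_\infty$ is an $(m,0)$-set. \textbf{(ii)} Because $M$ is $C^1$ and is the graph $z=\phi(y)$ over $T_pM$ near $p$ with $D\phi(0)=0$, the rescaled graphs converge Hausdorff-locally to the linear subspace $T_pM$, so $Z_\infty\subset T_pM$, and the chord direction $u$ also lies in $T_pM$. Finally $0\in Z_\infty$, and the rescaled ball-exclusion becomes $\langle u,y\rangle\le|y|^2/2$ for $y\in Z_\infty$.

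It remains to produce a $C^2$ function contradicting the $(m,0)$-property of $Z_\infty$ at $0$. Choose an orthonormal basis $e_1=u,\,e_2,\dots,e_m$ of $T_pM$ and $e_{m+1},\dots,e_n$ of $T_pM^\perp$, and write $(y_1,y',z)\in T_p\Omega$ accordingly; set
\[
f(y_1,y',z)=\epsilon_1 y_1+\tfrac{\lambda}{2}(y_1^2+|z|^2)-\tfrac{\mu}{2}|y'|^2,
\]
with $\epsilon_1>0$, $\mu>2\epsilon_1$ and $\lambda>(m-1)\mu$. On $Z_\infty$ we have $z=0$, and the constraint $y_1\le(y_1^2+|y'|^2)/2$ forces $|y'|^2\ge y_1(2-y_1)\ge y_1$ whenever $0<y_1\le 1$; so $f|_{Z_\infty}\le(\epsilon_1-\mu/2)y_1+(\lambda/2)y_1^2<0$ for $y_1>0$ small, while for $y_1\in(-2\epsilon_1/\lambda,0)$ the sum $\epsilon_1 y_1+(\lambda/2)y_1^2$ is already negative; hence $0$ is a strict local maximum of $f|_{Z_\infty}$. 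But $|Df(0)|=\epsilon_1$ and $D^2f(0)=\operatorname{diag}(\lambda,-\mu,\dots,-\mu,\lambda,\dots,\lambda)$ (with $\lambda$ once, $-\mu$ repeated $m-1$ times, and $\lambda$ repeated $n-m$ times), giving $\trm(D^2f(0))=\lambda-(m-1)\mu>0=0\cdot|Df(0)|$, the required contradiction. The main obstacle is verifying the blow-up stability in \textbf{(i)}; the test function is modeled on the example $f(x,y,z)=x+\tfrac{3}{2}(x^2+z^2)-y^2$, which shows that the half-plane $\{x\le 0\}\subset\RR^2\subset\RR^3$ is not a $(2,0)$-set in $\RR^3$.
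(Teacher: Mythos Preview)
Your proof is correct and follows the same blow-up strategy as the paper: reduce to an $(m,0)$ set in $T_p\Omega$ contained in $T_pM$ and lying to one side of a hypersurface through $0$, then exhibit a test function violating the $(m,0)$ condition at $0$.  The difference lies in how the exclusion is set up.  The paper takes an open \emph{geodesic} ball $B\subset M$ contained in $M\setminus Z$ with a point $p\in Z$ on its boundary (Lemma~\ref{closed-set-lemma}), then dilates about $p$ by factors $\lambda_i\to\infty$; the blow-up of $M\setminus B$ is a closed half-plane $H\subset\Tan_pM$, so the limit set $Z^*$ satisfies the \emph{linear} constraint $x_1\le 0$.  This permits the very simple test function
\[
   f(x)=x_1+x_1^2+\sum_{i>m}x_i^2,
\]
for which $\trm(D^2f(0))=2>0$ and $f|_H\le 0$ is immediate.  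Your ambient-ball exclusion around the points $q_i$ instead yields the \emph{parabolic} constraint $y_1\le |y|^2/2$ in the limit, which is why you need the extra negative term $-\tfrac{\mu}{2}|y'|^2$ and the chain of inequalities $\mu>2\epsilon_1$, $\lambda>(m-1)\mu$ to force a local maximum.  Both routes work; the paper's intrinsic choice trades a bit more care in the setup (the geodesic-ball lemma) for a one-line endgame, while your approach is more direct at the start but pays for it with a fussier test function.
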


\begin{corollary}\label{intro-corollary}
Let $\Sigma$ be a closed, proper subset of a connected, ${\rm C}^1$-embedded, $m$-dimensional 
submanifold $M$ of $\Omega$ .  Suppose that $M_i$ is a sequence of $m$-dimensional varieties 
(or, more generally, varifolds)
in $\Omega$ such that the boundary measures of the $M_i$ are uniformly bounded on compact sets
and such that the mean curvatures of the $M_i$ are uniformly bounded.   If the areas of the $M_i$
are uniformly bounded on compact subsets of $\Omega\setminus \Sigma$, then they are also uniformly
bounded on compact subsets of $\Omega$.
\end{corollary}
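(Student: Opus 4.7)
The plan is to apply the Constancy Theorem directly to the area blow-up set of the sequence $M_i$, using the hypotheses to locate that set inside $M$ and then excluding the ``all of $M$'' alternative.

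First I would let $Z$ denote the area blow-up set associated with the sequence $M_i$, as defined in the introduction. Since the mean curvatures of the $M_i$ are uniformly bounded by some $h < \infty$, the integral condition
\[
  \limsup_{i\to\infty} \int_{M_i\cap K} (|H|-h)^+\,dA < \infty
\]
holds trivially (the integrand is zero) for every compact $K\subset\Omega$. Combined with the uniform bound on boundary measures, Theorem~\ref{intro-maximum-principle} applies, so $Z$ is an $(m,h)$ set.

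Next I would translate the hypothesis that the areas of the $M_i$ are uniformly bounded on compact subsets of $\Omega\setminus\Sigma$ into the containment $Z\subset \Sigma$: indeed, if $x\in \Omega\setminus\Sigma$, then since $\Sigma$ is closed, some ball $\BB(x,r)$ has compact closure in $\Omega\setminus\Sigma$, and on this ball the $|M_i|$-masses are uniformly bounded, so $x\notin Z$. Since $\Sigma\subset M$ by hypothesis, this gives $Z\subset M$.

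Now I would invoke the Constancy Theorem: $Z$ is an $(m,h)$ set contained in the connected, $C^1$-embedded, $m$-dimensional submanifold $M$, so $Z = \emptyset$ or $Z = M$. The second alternative is ruled out because $Z\subset \Sigma$ and $\Sigma$ is a \emph{proper} subset of $M$. Hence $Z=\emptyset$, which by the definition of the blow-up set means that the $|M_i|$-masses are uniformly bounded on every compact subset of $\Omega$. No step is really an obstacle here; the only thing to be careful about is checking that the mean-curvature-bound hypothesis of the Corollary is strong enough to invoke Theorem~\ref{intro-maximum-principle} (and hence to know $Z$ is an $(m,h)$ set), which it is, as noted above.
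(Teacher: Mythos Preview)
Your proof is correct and is precisely the argument the paper intends: apply the main theorem (\S\ref{main-theorem}, stated in the introduction as Theorem~\ref{intro-maximum-principle}) to see that the area blow-up set $Z$ is an $(m,h)$ set, observe that the hypothesis forces $Z\subset\Sigma\subsetneq M$, and then invoke the Constancy Theorem to conclude $Z=\emptyset$. The paper does not spell out a separate proof of this corollary, but your write-up is exactly the intended deduction.
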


In Section~\ref{allard-section}, 
we use the results above (specifically, Corollary~\ref{intro-corollary}) 
to prove a theorem (\S\ref{more-extended-allard-theorem}) that extends 
Allard's Regularity Theorem in the case of integer-multiplicity varifolds.
(Allard's Theorem holds more generally for varifolds with densities bounded below by $1$,
but our theorem is false under that weaker assumption: 
see~\S\ref{counterexample-section}.)
For example, for minimal varieties, we have:

\begin{theorem}[\S\ref{extended-allard-theorem}]\label{intro-allard-theorem}
Suppose $M_i$ is a sequence of proper $m$-dimensional minimal varieties-without-boundary
 (or stationary integral varifolds) 
in a Riemannian manifold $\Omega$.  Suppose the $M_i$ converge as sets (see
 remark~\ref{remark:kuratowski}) to a subset of an $m$-dimensional,
connected, ${\rm C}^1$ properly embedded submanifold $M$ of $\Omega$.  If the $M_i$ converge weakly to $M$ with
multiplicity one anywhere, then they converge smoothly to $M$ everywhere.
\end{theorem}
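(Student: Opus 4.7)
\medskip\noindent
\textbf{Proof plan.} The strategy is to use the Constancy Theorem for $(m,h)$-sets proved earlier in the paper to upgrade the hypothesized multiplicity-one convergence at a single point to uniform area bounds on all of $\Omega$, then to identify the varifold limit as $[[M]]$ and to invoke Allard's Regularity Theorem.

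First I would show that the area blow-up set $Z$ of $\{M_i\}$ is empty. Since the $M_i$ are stationary (so $h=0$) and have no boundary, Theorem~\ref{intro-maximum-principle} makes $Z$ an $(m,0)$-set. The set-convergence of the $M_i$ to a subset of $M$, together with properness of $M$ in $\Omega$, shows that any $x\notin M$ has a neighborhood eventually disjoint from $M_i$, so $Z\subset M$. The multiplicity-one weak convergence at some $p_0\in M$ gives uniform area bounds on a neighborhood of $p_0$, so $p_0\notin Z$; thus $Z$ is a proper subset of the connected $C^1$-embedded submanifold $M$, and the Constancy Theorem forces $Z=\emptyset$. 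Hence $|M_i|$ is uniformly bounded on every compact subset of $\Omega$.

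Next, after passing to a subsequence, $M_i\to V$ as varifolds, with $V$ stationary and integer-rectifiable (by Allard's compactness theorem). The set-convergence hypothesis forces $\spt V\subset M$. The classical constancy theorem for stationary integer-rectifiable varifolds supported in a connected $C^1$-embedded submanifold of the same dimension then gives $V=k\,[[M]]$ for some non-negative integer $k$; the multiplicity-one hypothesis at $p_0$ pins $k=1$, so $V=[[M]]$. Since every subsequence of the $M_i$ has a further subsequence converging to this same limit, the full sequence $M_i$ converges to $[[M]]$ as varifolds. Because $V=[[M]]$ has density $1$ at every point of $M$, Allard's Regularity Theorem applies at each $p\in M$ to yield a neighborhood on which the convergence $M_i\to M$ is smooth.

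I expect the substantive step to be the first one: leveraging the $(m,0)$-set maximum principle and Constancy Theorem to turn local area control at $p_0$ into global area control on $\Omega$. Once that is in place, the identification of the limit as $[[M]]$ via the classical constancy theorem for varifolds, and the application of Allard's theorem at each point of $M$, are essentially routine.
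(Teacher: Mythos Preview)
Your proposal is correct and follows essentially the same route as the paper: use Theorem~\ref{main-theorem} to see that the area blow-up set $Z$ is an $(m,0)$ set contained properly in $M$, apply the Constancy Theorem~\ref{constancy-theorem} to conclude $Z=\emptyset$, pass to a varifold limit $V$ supported in $M$, use the constancy theorem for stationary varifolds to identify $V$ as $M$ with constant multiplicity (hence multiplicity one), and finish with Allard's Regularity Theorem. The only minor additions you make beyond the paper's own proof are the explicit verification that $Z\subset M$ and the subsequence-of-a-subsequence remark to get convergence of the full sequence, both of which are fine.
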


The key word here is ``anywhere": to invoke Allard's theorem directly, one needs to assume
the weak, multiplicity one convergence $M_i\to M$ everywhere.

An analogous result (\S\ref{more-extended-allard-theorem})  
 holds if the $M_i$ have uniformly bounded mean curvatures.  
 
Allard's Regularity Theorem does not require bounded mean curvature, but rather only
mean curvature in $\LL^p$ for some $p$ greater than the dimension.
Similarly, Theorem~\ref{more-extended-allard-theorem} does not require that the surfaces $M_i$
in question have bounded
mean curvature, but rather that they satisfy the weaker hypothesis that
\begin{equation}\label{eq:Lp-type-condition}
   \limsup_i\int_{M_i\cap K} ( |H|-h)^+)^p\,dA  < \infty
\end{equation}
for some $h<\infty$, for some $p>m$, and for every compact $K\subset \Omega$. In that case, the conclusion
is not smooth convergence but rather ${\rm C}^1$ convergence with local ${\rm C}^{1,1-m/p}$ bounds.
\begin{comment}
Note that~\eqref{eq:Lp-type-condition} is weaker than the hypothesis that
\[
   \limsup_ i\int_{M_i\cap K}|H|^p\,dA <\infty,
\]
which one would need to apply Allard's theorem directly. 
\end{comment}

\begin{remark}\label{remark:kuratowski}
In Theorem~\ref{intro-allard-theorem} (and elsewhere in this paper),  we say that the sets $S_i\subset \Omega$
{\em converge as sets} to $S\subset\Omega$ provided:
\[
  S = \{ x\in \Omega: \limsup_i d(x, S_i) = 0 \}  = \{x\in \Omega: \liminf_i d(x,S_i) = 0\},
\]
where $d(x,A)=\inf\{ d(x,a): a\in A\}$.
This notion (due to Kuratowski) is in general weaker than convergence 
 with respect to the Hausdorff metric on closed sets. (For compact $\Omega$, the 
 two notions are equivalent.)   
Such convergence has a nice compactness property:
if $S_i$ is a sequence of closed subsets of a Riemannian manifold $\Omega$,
then (according to the Arzela-Ascoli theorem) 
by passing to a subsequence, we can assume that the functions $d(\cdot,S_i)$
converge to a limit function $f$.   It follows that the $S_i$ converge as sets to the zero
set of $f$.
\end{remark}

Section~\ref{allard-boundary-section} gives a version of Allard's Boundary Regularity Theorem 
that does not assume any area bounds.

Sections~\ref{minimal-hypersurfaces-section} and~\ref{bounded-mean-curvature-section}
 give additional results for the case of codimension one varieties, i.e., the case of $(m,h)$ sets
in an $(m+1)$-dimensional manifold.  For example, as a special case of those results, we have:

\begin{theorem}\label{intro-minimal-hypersurface-theorem}
Let $m<7$ and let $N$ be closed, mean convex region in $\RR^{m+1}$ with smooth boundary.
Suppose that $\partial N$ is not a minimal surface, and that $N$ does not contain
any smooth, stable, properly embedded minimal hypersurface.
If $Z$ is an $(m,0)$ set contained in $N$, then $Z=\emptyset$.
\end{theorem}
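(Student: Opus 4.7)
The proof is by contradiction: assume $Z \neq \emptyset$, and the plan is to produce a smooth, stable, properly embedded minimal hypersurface in $N$, contradicting the second hypothesis. The strategy is to shrink $N$ by a (weak) mean curvature flow of $\partial N$, using the strong barrier principle (Theorem \ref{strong-barrier-theorem}) and the maximum principle (Theorem \ref{intro-maximum-principle}) to trap $Z$ inside the shrinking region, and then extract the desired hypersurface from the long-time limit.

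First I would reduce to the case where $\partial N$ is strictly mean convex. Because $\partial N$ is mean convex with $H \not\equiv 0$, a short-time mean curvature flow of $\partial N$ produces an inner approximation $\partial N'$ on which $H > 0$ on every nonstationary component (by the parabolic strong maximum principle); the edge case of a stationary minimal component of $\partial N$ would already give, by a Plateau-type construction on one side of the component, a smooth stable properly embedded minimal hypersurface in $N$ and hence a contradiction. So assume $\partial N$ itself is strictly mean convex.

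Now run the level-set mean curvature flow of $\partial N$ forward in time, producing a nested family $\{N_t\}_{t \geq 0}$ of closed regions with $N_0 = N$ and $N_s \subset N_t$ for $s \geq t$. The key claim is $Z \subset N_t$ for every $t \geq 0$. Suppose otherwise and let $T$ be the first contact time, with $p \in Z \cap \partial N_T$ a smooth point of $\partial N_T$. Since $\partial N_T$ has nonnegative inward mean curvature, Theorem \ref{strong-barrier-theorem} forces $Z$ to contain the entire connected component $C$ of $\partial N_T$ through $p$. Applying Theorem \ref{intro-maximum-principle} at any point of $C$ with $f$ the signed distance to $\partial N_T$ from inside then forces the inward mean curvature of $C$ to be $\leq 0$ pointwise; combined with the strict mean convexity of $\partial N_t$ (inherited from $\partial N$ by the strong maximum principle for the flow), this is a contradiction.

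Having established $Z \subset N_t$ for all $t$, I analyze the long-time behavior. Since $|\partial N_t|$ is nonincreasing, one of two things happens: either (i) the flow becomes extinct in finite time, contradicting $\emptyset \neq Z \subset N_t$; or (ii) some subsequence $\partial N_{t_k}$ converges as varifolds to a stationary integral limit $V$ supported in $N$. In case (ii), because the $\partial N_{t_k}$ arise from a mean convex flow and $m<7$, the regularity theory for mean convex mean curvature flow together with Schoen--Simon regularity for stable minimal hypersurfaces implies that $\spt V$ is a smooth, properly embedded, stable minimal hypersurface in $N$, again a contradiction. The main obstacle is making the mean curvature flow argument rigorous through possible singular times and extracting a limit in case (ii) with all the required properties (smoothness, properness, and stability); both steps depend essentially on the dimensional restriction $m<7$ and on the author's structural theorems for mean convex mean curvature flow.
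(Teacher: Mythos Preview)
Your strategy is essentially the paper's: run mean curvature flow on $\partial N$, keep $Z$ trapped inside the evolving region via the barrier principle, and extract a smooth stable minimal hypersurface from the long-time limit using the structure theory of mean convex mean curvature flow (the reference \cite{white-size}). The paper's execution is cleaner in a few places. First, the reduction to strictly mean convex $\partial N$ is unnecessary: the paper treats the mean convex, non-minimal case directly (its Case~1), and your ``Plateau-type construction'' for a hypothetical minimal component is both vague and not needed. Second, your first-contact-time argument for $Z\subset N_t$ assumes the contact point $p\in\partial N_T$ is smooth, which is not justified since the level-set flow can be singular; and once you grant smoothness, the detour through the strong barrier principle is superfluous, since the weak Barrier Principle (Theorem~\ref{barrier-theorem}) at $p$ already gives $H(\partial N_T,p)\le 0$, contradicting strict mean convexity. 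The paper bypasses both issues by citing the avoidance principle for $(m,0)$ sets under mean curvature flow (Corollary~\ref{avoidance-corollary}), which is proved using only the barrier principle and holds for weak flows. Finally, note that the avoidance step does not use $m<7$; the dimension restriction enters only for the regularity (and stability) of $\partial K_\infty$, which comes from \cite{white-size}.
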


(We remark that in $\RR^3$, the hypothesis that $N$ not contain a smooth, stable
properly embedded minimal hypersurface is redundant.)

We also prove (Corollaries~\ref{halfspace-theorem-1} and~\ref{halfspace-theorem-2}) 
that the Hoffman-Meeks Halfspace Theorems for proper minimal surfaces in $\RR^3$
hold for arbitrary $(2,0)$ sets in $\RR^3$.

Finally, in section~\ref{distance-section}, we prove

\begin{theorem}
If $Z\subset \RR^n$ is an $(m,h)$ set, then the set $Z(s)$ of points at distance $\le s$ from $Z$
is also an $(m,h)$ set.
\end{theorem}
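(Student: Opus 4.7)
My plan is to verify the maximum principle of Theorem~\ref{intro-maximum-principle} directly for $Z(s)$ by reducing each test case to the known $(m,h)$ property of $Z$ via a Euclidean translation. Fix $s>0$ (the case $s=0$ is trivial since $Z(s)=Z$), and let $f\in C^2(\RR^n)$ be a function such that $f|Z(s)$ has a local maximum at some $q\in Z(s)$; the goal is to show $\trm(D^2 f(q))\le h|Df(q)|$.

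I would first dispatch the interior case $d(q,Z)<s$. Since $d(\cdot,Z)$ is continuous, $q$ lies in the topological interior of $Z(s)$ in $\RR^n$, so $q$ is an unrestricted local maximum of $f$; then $Df(q)=0$ and $D^2f(q)$ is negative semidefinite, whence the inequality is immediate.

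The substantive case is $d(q,Z)=s$. Here the plan is to choose a nearest point $p\in Z$ to $q$ (which exists because $Z$ is closed and $Z\cap \bar{\BB}(q,s+1)$ is compact), set $v=q-p$ so that $|v|=s$, and consider the translated function $\tildeg(x)=f(x+v)$. The key observation is that $Z+v\subset Z(s)$, because every $x\in Z$ satisfies $d(x+v,Z)\le|v|=s$. Consequently, for $x\in Z$ near $p$ the point $x+v$ lies in $Z(s)$ near $q$, so $\tildeg(x)=f(x+v)\le f(q)=\tildeg(p)$, showing that $\tildeg|Z$ has a local maximum at $p$. Applying the $(m,h)$ property of $Z$ to $\tildeg$ at $p$, combined with $D\tildeg(p)=Df(q)$ and $D^2\tildeg(p)=D^2f(q)$, then yields $\trm(D^2 f(q))\le h|Df(q)|$.

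I do not anticipate a serious obstacle: once the translation trick is spotted, the proof is essentially formal. The only step requiring a moment of attention is confirming that the local-maximum property transfers to $\tildeg|Z$ at $p$, which rests on the elementary fact that $Z(s)$ is exactly the union of the translates $Z+v$ with $|v|\le s$. The reliance on Euclidean translation is what restricts the statement to $\RR^n$; a version in a curved ambient would presumably need a more elaborate argument (e.g.\ using nearest-point retractions and comparison of Hessians along geodesics).
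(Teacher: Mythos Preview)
Your proof is correct and follows essentially the same translation argument as the paper: pick a nearest point of $Z$, translate the test function by the corresponding vector, and apply the $(m,h)$ property of $Z$. The paper's version is marginally more streamlined in that it does not split off the interior case $d(q,Z)<s$; the translation argument already covers it, since if $|v|<s$ one still has $Z+v\subset Z(s)$ and the rest goes through verbatim.
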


We also prove an analogous result for $(m,h)$ subsets of Riemannian manifolds.

Readers who are interested in minimal varieties (rather than varieties of bounded curvature)
may skip section~\ref{bounded-mean-curvature-section} 
and much of sections~\ref{allard-section} and~\ref{allard-boundary-section}.  
(The portions of~\ref{allard-section} 
and~\ref{allard-boundary-section} that may be
skipped are indicated there.) 

\section{Area Blowup}\label{area-blowup-section}

\begin{definition}\label{(m,h)-definition}
Let $\Omega$ be a smooth manifold without boundary and with a ${\rm C}^1$ Riemannian metric $g$.
Let $Z$ be a closed subset of $\Omega$.
We say that $Z$ is an {\em $(m,h)$ subset} of $(\Omega,g)$
provided it has the following property: if $f:\Omega\to \RR$ is a ${\rm C}^2$
function such that $f|Z$ has a local maximum at $p$, then
\begin{equation}\label{the-test}
  \trm(\D^2f(p)) \le  h\,|\D f(p)|. 
\end{equation}
    If there is such a function $f$ for which~\eqref{the-test} does not hold, we say that $Z$ fails
to be an $(m,h)$ set at the point $p$.
\end{definition}

Here $|\D f|$ is the norm of the derivative of $f$ (or, equivalently, the norm of the gradient of $f$)
with respect to the metric $g$, and 
 $\trm(\D^2f)$ is the sum of the lowest $m$ eigenvalues of the
 Hessian of $f$ with respect to the metric $g$.
 In other words, $\trm(\D^2f(p))$ is the sum of the lowest $m$ eigenvalues of the 
 matrix of second partial derivatives of $f$ in any system of normal coordinates at $p$.
Note this matrix is also the matrix (with respect to the same normal coordinates) for the endomorphism 
$\D (\nabla f(p))$, where $\D$ is the covariant derivative (with respect to the Levi-Civita
connection) and $\nabla f$ is the gradient of $f$.  Thus $\trm(\D^2f)$ is equal
to $\trm(\D \nabla f)$.

\begin{remark}\label{closed-set-of-h-remark}
Let $Z\subset \Omega$ be a closed set.  It follows immediately from the definition
that the set of $h$ for which $Z$ is an $(m,h)$ set either is empty or has the form $[\eta,\infty)$
for some $0\le \eta<\infty$.
\end{remark}

\begin{remark}
Suppose $N$ is a smooth Riemannian $n$-manifold with boundary and with a ${\rm C}^1$ Riemannian metric $g$.
Then $N$ can be embedded into a smooth open $n$-manifold $\Omega$ and the metric $g$ can be extended
to be a ${\rm C}^1$ Riemannian metric on all of $\Omega$.   A closed subset $Z$ of $N$ is called an $(m,h)$ subset
of $(N,g)$ if and only if it is an $(m,h)$ subset of $(\Omega,g)$.  It is straightforward to show that this condition is indepedent
of the choice of $\Omega$ and of choice of the extension of the metric.
\end{remark}

The following lemma implies that in the definition of $(m,h)$ subset, it suffices
to consider test functions $f$ with additional properties.

\begin{lemma}\label{special-f-lemma}
Suppose $Z\subset \Omega$ is a closed subset  that fails to be an $(m,h)$ subset at the point $p\in Z$.
Then there is a ${\rm C}^2$ function $f:\Omega\to \RR$ such that
\begin{enumerate}[\rm (1)]
\item\label{item:strictly-positive} $\trm(\D^2f(p)) > h\,|\D{}f(p)|$.
\item\label{item:unique} The restriction of $f$ to $Z$ attains its maximum value of $0$ uniquely at the point $p$:
\[  
     \text{$f(x) < f(p)=0$ for all $x\in Z$, $x\ne p$}.
\]
\item\label{item:proper}
The set $\{x: f(x)\ge a\}$ is compact for every $a\in \RR$.  Indeed, if $u:\Omega\to (-\infty, 0]$
is any smooth, proper function, we can choose $f$ so that $f$ coincides with $u$ outside of some compact
set.
\end{enumerate}
\end{lemma}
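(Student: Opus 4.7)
The plan is to start from a $C^2$ function $f_0$ that witnesses the failure of the $(m,h)$ condition at $p$---so $\trm(D^2 f_0(p)) > h\,|Df_0(p)|$ and $p$ is a local maximum of $f_0|Z$---and to modify $f_0$ successively to acquire the three listed properties. The key invariant is that condition~\eqref{item:strictly-positive} depends only on the $2$-jet of $f$ at the single point $p$, so any modification that agrees with $f_0$ (up to order two) at $p$ automatically preserves it. After subtracting the constant $f_0(p)$ I may assume $f_0(p) = 0$, and I fix a relatively compact open $W\ni p$ with $f_0 \le 0$ on $Z\cap W$.

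First I would turn the local maximum on $Z$ into a strict one. In a coordinate chart $x$ centered at $p$, pick a smooth function $\rho:\Omega\to[0,\infty)$, supported in $W$, that equals the quadratic form $x\mapsto g_p(x,x)$ on a smaller neighborhood $W'$ of $p$. Then $\rho(p) = 0$, $D\rho(p) = 0$, and the covariant Hessian (for which the Christoffel correction vanishes at $p$ since $D\rho(p)=0$) satisfies $D^2\rho(p) = 2g_p$, so $\trm(D^2\rho(p)) = 2m$. Hence $f_1 := f_0 - \epsilon\rho$ has $Df_1(p) = Df_0(p)$ and $\trm(D^2 f_1(p)) = \trm(D^2 f_0(p)) - 2m\epsilon$; choose $\epsilon > 0$ small enough to retain~\eqref{item:strictly-positive}. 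On $Z\cap W'\setminus\{p\}$, $f_1 \le -\epsilon\rho < 0$. To globalize and to enforce~\eqref{item:proper}, I paste $f_1$ against the given $u\le 0$. By properness, $\{u\ge-\delta\}$ is compact for every $\delta>0$; pick $\delta>0$ and a compact $K_2$ whose interior contains $\{u\ge-\delta\}\cup\spt\rho$, together with smooth cutoffs $\psi_1,\psi_2:\Omega\to[0,1]$ such that $\psi_1 = 1$ on a small neighborhood of $p$, $\spt\psi_1\subset W'$, and $\psi_2 = 1$ on $\spt\psi_1\cup\{u\ge-\delta\}$, $\spt\psi_2\subset K_2$. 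Then set
\[
  f := \psi_1 f_1 + (\psi_2-\psi_1)(-N) + (1-\psi_2)\,u
\]
for a large constant $N > 0$.

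It remains to verify the three conclusions. At $p$ one has $\psi_1=\psi_2=1$, so $f = f_1$ and~\eqref{item:strictly-positive} survives; outside $\spt\psi_2\subset K_2$ one has $\psi_1=\psi_2=0$ and $f = u$, giving~\eqref{item:proper}. For~\eqref{item:unique}, note that by construction $\psi_2\ge\psi_1$ pointwise and $u\le 0$, so all three terms of $f$ are $\le 0$ on $Z$: on $\{\psi_1>0\}$, where $\psi_2 = 1$ and (since $\spt\psi_1\subset W'\subset W$) $f_1\le 0$, one has $f = \psi_1 f_1 - (1-\psi_1)N$, which vanishes only when $\psi_1 = 1$ and $f_1 = 0$, forcing $x=p$; on $\{\psi_1 = 0, \psi_2 > 0\}$, $f \le -\psi_2 N < 0$; and outside $\spt\psi_2$, $f = u \le -\delta < 0$ by the choice of $\psi_2$. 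The main obstacle is not any single analytic step but the geometric bookkeeping: arranging the nested neighborhoods $V\subset \spt\psi_1\subset W'\subset\{\psi_2=1\}\subset\spt\psi_2\subset K_2$ and choosing $N$ large enough so that all the sign inequalities go through simultaneously, while keeping $f$ identically equal to $f_1$ near $p$ and identically equal to $u$ outside $K_2$.
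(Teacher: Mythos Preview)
Your argument is correct; the cut-and-paste strategy with cutoffs is exactly the paper's approach, and your bookkeeping is sound.  The paper's execution is, however, noticeably more economical: rather than introducing a separate quadratic bump $\rho$ to force strictness and a constant $-N$ as a buffer between two cutoffs, the paper first modifies the given proper function $u$ on a compact set so that $u(p)=0$, $u(x)<0$ for $x\neq p$, and $D^2u(p)=0$ (note $Du(p)=0$ is automatic since $p$ is the maximum of $u$), and then simply sets $f_{\text{new}} = \phi f_0 + u$ for a single cutoff $\phi$ supported in your $W$ and equal to $1$ near $p$.  The modified $u$ then does triple duty: it forces the strict global maximum on $Z$ at $p$, it provides the properness, and because its $2$-jet at $p$ vanishes it leaves condition~(1) untouched.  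Your approach buys nothing extra, but it is a perfectly valid alternative.
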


\begin{proof}
By hypothesis,  there is a ${\rm C}^2$ function
$f:\Omega\to \RR$ such that~\eqref{item:strictly-positive} holds and such that $f|Z$ has a local maximum
at $p$:
\[
     \max f | Z\cap\BB = f(p),
\]
where $\BB$ is some open neighborhood of $p$.
By replacing $f$ by $f-f(p)$, we can assume that $f(p)=0$.

Let $u:\Omega\to (-\infty, 0]$ be a smooth, proper function.
By modifying $u$ on a compact set, we can assume that $u(p)=0$, that $u(x)<0$ for all $x\ne p$,
and that $\D^2u(p)=0$.

Let $\phi:\Omega\to \RR$ be a smooth, nonnegative function that is supported in $\BB$ and that is equal to $1$ in some
neighborhood of $p$.  Replacing $f$ by $\phi f + u$ gives a function with all the asserted properties.
\end{proof}

The following corollary says that we can choose the function $f$ in Lemma~\ref{special-f-lemma} to be smooth
(not just ${\rm C}^2$), provided we are allowed to move the point $p$ slightly:

\begin{corollary}\label{special-f-corollary}
Suppose $Z\subset \Omega$ is a closed subset  that fails to be an $(m,h)$ subset at the point $q\in Z$.
Then there is a point $p\in Z$ (which may be chosen arbitrarily close to $q$) and a smooth
function $f:\Omega\to \RR$ having the 
properties asserted in Lemma~\ref{special-f-lemma}.
\end{corollary}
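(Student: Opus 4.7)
The plan is to observe that the construction in the proof of Lemma~\ref{special-f-lemma} produces a smooth function whenever the input $C^2$ function is already smooth (since the cutoff $\phi$ and the auxiliary proper function $u$ are smooth by choice). So it suffices to exhibit, at some point $p$ arbitrarily close to $q$, a \emph{smooth} function $\tilde f$ whose restriction to $Z$ has a local maximum at $p$ and which satisfies the strict inequality $\trm(D^2\tilde f(p)) > h\,|D\tilde f(p)|$. Applying Lemma~\ref{special-f-lemma} to such a smooth witness then yields the desired smooth function with all three properties.

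To upgrade the $C^2$ witness provided by the hypothesis, I would first arrange that the local maximum is strict. Let $f$ be a $C^2$ function such that $f|Z$ attains a local max at $q$ and $\trm(D^2f(q)) > h\,|Df(q)|$. In a coordinate chart centered at $q$, replace $f$ by $f(x) - C\,\chi(x)\,|x-q|^4$, where $\chi$ is a smooth bump equal to $1$ near $q$. Both $Df(q)$ and $D^2f(q)$ are unchanged by this modification, so the strict inequality at $q$ persists, while $f|Z$ now has a \emph{strict} local maximum at $q$.

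Next I would mollify $f$ in the coordinate chart to obtain smooth functions $\tilde f_\epsilon$ with $\tilde f_\epsilon \to f$ in $C^2$ on a neighborhood of $q$. Choose a small closed ball $\overline V$ about $q$ on which $\max_{Z\cap \partial V} f < f(q)$. By uniform $C^0$ convergence, for all sufficiently small $\epsilon$ the function $\tilde f_\epsilon|_{Z\cap\overline V}$ attains its maximum at an interior point $p_\epsilon \in V$, so $p_\epsilon$ is a local max of $\tilde f_\epsilon|Z$ and $p_\epsilon \to q$ as $\epsilon\to 0$. Since $\trm(D^2f(q)) > h\,|Df(q)|$ is an open condition preserved under both $C^2$-small perturbations of the function and small movements of the base point, $C^2$-convergence gives $\trm(D^2\tilde f_\epsilon(p_\epsilon)) > h\,|D\tilde f_\epsilon(p_\epsilon)|$ for small $\epsilon$.

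Finally, with $\tilde f_\epsilon$ now a \emph{smooth} witness that $Z$ fails to be an $(m,h)$ set at $p_\epsilon$, I would rerun the construction in the proof of Lemma~\ref{special-f-lemma} starting from $\tilde f_\epsilon$: multiplying by a smooth cutoff supported near $p_\epsilon$ and adding a smooth proper function $u$ with $u(p_\epsilon)=0$, $u<0$ elsewhere, and $D^2u(p_\epsilon)=0$ produces a smooth function with all three properties at the point $p := p_\epsilon$, which can be made as close to $q$ as desired. The only mildly delicate step is ensuring that the new maximum $p_\epsilon$ lies in the interior of $V$ rather than on $\partial V$; this is exactly what the quartic penalty in the strictness step buys, and the rest is routine convolution smoothing in a chart.
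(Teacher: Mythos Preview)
Your argument is correct and follows essentially the same route as the paper: approximate the $C^2$ witness by smooth functions converging in $C^2$, use the strict maximum to locate interior maximizers $p_\epsilon\to q$ of the approximants on $Z$, invoke the openness of the inequality $\trm(D^2f)>h|Df|$ under $C^2$ perturbation and base-point drift, and then rerun the cutoff-plus-proper-function construction from Lemma~\ref{special-f-lemma} with smooth inputs. The only procedural difference is that the paper first applies Lemma~\ref{special-f-lemma} to obtain a globally proper $C^2$ function with a unique global maximum on $Z$ and then smooths globally, whereas you first enforce a strict \emph{local} maximum via the quartic penalty and then mollify in a chart; both orderings accomplish the same thing.
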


\begin{proof}
Let $f$ be a ${\rm C}^2$ function having all the properties asserted by Lemma~\ref{special-f-lemma}
 with $q$ in place of $p$.
Let $f_i:\Omega\to \RR$ be a sequence of smooth functions such that $f_i$ converges 
to $f$ uniformly and also locally in ${\rm C}^2$.   It follows that each $f_i|Z$ attains its maximum at some point $p_i$,
and that the $p_i$ converge to $q$.
Furthermore, the local ${\rm C}^2$ convergence implies that
\[
    \trm(\D^2f_i(p_i)) > h\,|\D{}f_i(p_i)|
\]
for all sufficiently large $i$.
For each such $i$, we can modify $f_i$ exactly as in the proof of Lemma~\ref{special-f-lemma}
to get a smooth function $\tilde f_i$ that has 
properties~\eqref{item:strictly-positive}, \eqref{item:unique}, and \eqref{item:proper} 
(with $\tilde f_i$ and $p_i$ in place of $f$ and $p$.)
\end{proof}

\begin{theorem}\label{main-theorem}
Let $\Omega$ be a smooth, $n$-dimensional manifold without boundary.
Let $g_i$ ($i=1,2,3,\dots$) and $g$ be ${\rm C}^1$ Riemannian metrics on $\Omega$
such that the $g_i$ converge to $g$ in ${\rm C}^1$.

For each $i$, let $M_i$ be an $m$-dimensional varifold in $\Omega$
such that the mean curvature of $M_i$ with respect to $g_i$ is bounded
by $h<\infty$ and such that the boundaries of the $M_i$ are uniformly bounded
on compact subsets of $\Omega$:
\begin{equation}\label{boundaries-controlled}
   \text{$\limsup_i |\partial M_i|(U) < \infty$ for all $U\subset\subset \Omega$}.
\end{equation}

Let 
\begin{equation}\label{Z-def}
   Z = \{ x\in \Omega: \text{$\limsup_i |M_i|(\BB(x,r)) = \infty$ for every $r>0$} \}.
\end{equation}

Then $Z$ is an $(m,h)$ subset of $\Omega$.

More generally, the conclusion remains true if the hypothesis that the mean curvatures
are bounded by $h$ is replaced by the hypothesis that
\begin{equation}\label{integral-bound}
  \limsup_i \int_{M_i\cap K} (|H|- h)^+\,dA <\infty
\end{equation}
for every compact $K\subset \Omega$, where $H$ is the mean curvature vector
and where $t^+ =\max\{t,0\}$.
\end{theorem}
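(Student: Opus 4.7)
The plan is a proof by contradiction via the first variation of area. Suppose $Z$ fails to be an $(m,h)$ subset at some point $q\in Z$. By Corollary~\ref{special-f-corollary}, I may take a point $p\in Z$ and a smooth function $f:\Omega\to\RR$ for which $\trm(D^2f(p)) > h\,|Df(p)|$, for which $f|Z$ is uniquely maximized at $p$ with value $0$, and for which every superlevel set $\{f\ge a\}$ is compact. The idea is to use $f$ to construct a compactly supported test vector field whose first variation on $M_i$ blows up with the mass $|M_i|$ near $p$, contradicting~\eqref{boundaries-controlled} together with the mean curvature hypothesis.

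Concretely, I would first combine continuity of $\trm(D^2f)$ and $|Df|$ with the $C^1$-convergence $g_i\to g$ (so that the Christoffel symbols---and hence the Hessians of $f$---converge uniformly on compacts) to select an open neighborhood $U$ of $p$ with compact closure and a constant $\delta>0$ such that $\trm(D^2f) - h_i\,|\nabla f| \ge \delta$ on $U$ for all sufficiently large $i$, where all derivatives are computed with respect to $g_i$. Since $p$ is the \emph{unique} maximizer of $f|Z$ and $Z\cap\{f\ge a\}$ is compact for every $a$, I can then choose $c<0$ close to $0$ so that $Z\cap\{f\ge c\}\subset U$. I then pick a smooth nondecreasing $\phi:\RR\to[0,\infty)$ with $\phi\equiv 0$ on $(-\infty,c]$ and $\phi>0$ on $(c,\infty)$, yielding a smooth vector field $X:=\phi(f)\nabla f$ compactly supported in $\{f\ge c\}$.

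The main computation is to apply the first variation formula for $M_i$ to $X$, using the identity $\Div_{M_i}(\phi(f)\nabla f) = \phi'(f)|\nabla^{M_i}f|^2 + \phi(f)\,\Div_{M_i}(\nabla f)$, the nonnegativity of $\phi'$, the inequality $\Div_{M_i}(\nabla f)\ge \trm(D^2f)$ at every tangent $m$-plane (the trace of the Hessian over an $m$-plane is minimized by the span of the lowest $m$ eigenvectors), and the mean curvature bound $|H|\le h_i$---or, under the weaker hypothesis~\eqref{integral-bound}, by splitting $|H|$ into its part bounded by $h_i$ and the excess $(|H|-h_i)^+$, whose integral on the compact set $\spt\phi(f)$ stays bounded. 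This should produce an estimate
\begin{equation*}
  \int \phi(f)\bigl(\trm(D^2f) - h_i\,|\nabla f|\bigr)\,dM_i \;\le\; C,
\end{equation*}
with $C$ independent of $i$, the boundary contribution being controlled by~\eqref{boundaries-controlled} on $\spt\phi(f)$. Splitting this integral into pieces over $U$ and over $\{f\ge c\}\setminus U$ then yields the contradiction: on $U$ the integrand dominates $\delta\phi(f)$; on $\{f\ge c\}\setminus U$---a compact set disjoint from $Z$ by construction---the mass $|M_i|$ remains bounded by the very definition of $Z$; yet $|M_i|(B)\to\infty$ for every ball $B$ around $p\in Z$, forcing the left-hand side to diverge.

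The main difficulty is arranging the cutoff so that $\spt\phi(f)$ meets $Z$ only inside $U$ (where the good pointwise inequality holds) while still capturing the area blowup at $p$; this is exactly what the uniqueness of the maximizer in Lemma~\ref{special-f-lemma} and the properness of its superlevel sets provide. A secondary subtlety is that the metrics converge only in $C^1$, but this is already enough to transfer the strict inequality at $p$ into a uniform $\delta$ on $U$ with respect to every $g_i$ for $i$ large.
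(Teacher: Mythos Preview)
Your proposal is correct and follows essentially the same route as the paper's own proof: contradiction via the first variation applied to a test vector field of the form $\phi(f)\nabla f$, splitting the domain into a good region near $p$ where $\trm(D^2f)-h_i|\nabla f|\ge\delta$ and a bad region disjoint from $Z$ where the mass is controlled. The only cosmetic difference is that the paper takes $\phi(t)=t^+$ (working on $N=\{f\ge0\}$ with $X=f\nabla f=\nabla(\tfrac12 f^2)$, so that the Hessian of $\tfrac12 f^2$ is computed directly), whereas you use a smooth nondecreasing cutoff $\phi$; your choice makes $X$ globally smooth and avoids the slight awkwardness of a merely Lipschitz test field, at no cost.
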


\begin{remark}
Readers who are primarily interested in minimal varieties 
 may wish to read the following proof under that assumption that the $M_i$
are minimal, i.e., that $h= 0$: in that case a number of terms in the proof
drop out.   Similarly, readers primarily interested in bounded mean curvature varieties
may wish to make the assumption that $M_i$ has mean curvature bounded by $h$
(instead of making the more general assumption~\eqref{integral-bound}), since a few terms
in the proof then drop out.
\end{remark}

\begin{proof}  To simplify notation, we give the proof in case the $M_i$
are properly embedded manifolds-with-boundary.   But (aside from the notation)
exactly the same proof
works for general varifolds.  We prove the result by contradiction.
Thus suppose $Z$ fails to be an $(m,h)$ set at a point $p\in Z$.

By Lemma~\ref{special-f-lemma}, there is a ${\rm C}^2$ function $f:\Omega\to \RR$
such that 
\begin{gather}
    \trm(\D^2f(p)) >h\,  |\D f(p)| ,  \\
    \label{strict-minimum} \text{$f(x)< f(p)$ for every $x\in Z\setminus \{p\}$, and} \\
    \label{proper}\text{$\{f\ge a\}$ is compact for every $a\in \RR$}.
\end{gather}

Choose $\delta>0$ so that 
\[
  \left[     \trm(\D^2f)(p) - h\, |\D f(p)|  \, \right]_g > \delta.
\]
where the subscript $g$ indicates that the expression inside the brackets is
with respect to the metric $g$.
Let $\BB\subset \Omega$ be a compact ball centered at $p$
such that
\begin{equation}\label{min-trace}
 \min_\BB   \left[     \trm(\D^2f) - h\, |\D{}f| \,  \right]_g > \delta.
\end{equation}
(Such a set $\BB$ exists because the inequality $[\trm(\D^2f) - h\,|\D f|\,]_g > \delta$ 
defines an open set containing $p$.)

By~\eqref{strict-minimum} and~\eqref{proper},
\[
    \max_{Z \setminus\operatorname{interior}(\BB)} f  < f(p).
\]
By adding a constant to $f$, we can assume that
\begin{equation}\label{adjust-height}
    \max_{Z \setminus\operatorname{interior}(\BB)} f< 0 < f(p).
\end{equation}

Let $N=\{f\ge 0\}$.  By~\eqref{proper} and~\eqref{adjust-height}, 
 $N\setminus \operatorname{interior}(\BB)$ is a compact subset of $Z^c$,
so by definition of $Z$,
\begin{equation}\label{N-minus-B}
    \limsup_i |M_i| \, (N\setminus \BB) < \infty.
\end{equation}
Let $\BB^*$ be a small closed ball centered at $p$ such that $\BB^*$ is in the interior
of $\BB\cap N$.  
Choose constants $\Gamma$, $\gamma>0$, and $\tau\ge 0$ such that
\begin{align}
\label{Gamma-intro}
\max_N f\,|\D f|_g &< \Gamma,
\\
\label{gamma-intro}
\min_{\BB^*} f &> \gamma,\quad\text{and}
\\
\label{tau-intro}
\min_N [\,f\,\trm(\D^2f)]_g &> -\tau.
\end{align}
Note that the left sides of~\eqref{min-trace}, \eqref{Gamma-intro},
 and~\eqref{tau-intro}
all depend ${\rm C}^1$-continuously on the metric $g$.
Thus for all sufficiently large $i$, the inequalities hold with $g_i$
in place of $g$; for the rest of the proof, we restrict ourselves to such $i$.
All metric-dependent quantities below are with respect to $g_i$.

Let $X_i$ be the gradient of $\frac12 f^2$ with respect to the metric $g_i$:
\[
X_i=
\grad\left(\frac12 f^2\right)
=
f \grad f.
\]
Thus
\[
       |X_i| = f \,|\D f| \le \Gamma
\]
on $N$ by~\eqref{Gamma-intro}.
Also,
\[
\D X_i = f \D (\nabla f) + \D f \otimes \nabla f,
\]
so
\begin{equation}\label{eq:traces}
\trm(\D X_i) = \trm(f \D (\nabla f) + \D f \otimes \nabla f).
\end{equation}
The endomorphisms $f \D (\nabla f)$ and $\D f \otimes \nabla f$ of the tangent space (to the ambient space)
 correspond (using the metric) to the quadratic forms $f \D^2f$ and $df\otimes df$,
so~\eqref{eq:traces} implies that
\begin{align*}
\trm(\D X_i) 
&= \trm( f \D^2 f  + df\otimes df)
\\
&\ge \trm (f \D^2 f).
\end{align*}
This last inequality holds because $df\otimes df$ is positive semidefinite, which implies
that the eigenvalues of $f \D^2f+ df\otimes df$
are bounded below by the corresponding eigenvalues of $f\D^2f$.
(See Lemma~\ref{rayleigh-lemma}.)
Thus
\begin{equation*}
\trm(\D X_i)
\ge
\trm( f \D^2 f)
=
f \trm( \D^2 f)
\end{equation*}
on $N$.

Since $\Div_{M_i}X_i \ge \trm(\D X_i)$, this implies that
\begin{equation}\label{Div-bound}
\Div_{M_i}X_i 
\ge
\begin{cases} 
f\, (\,|\D f| \,h + \delta) &\text{on $N\cap\BB$} 
\\
 -\tau      &\text{on $N\setminus \BB$}
\end{cases}
\end{equation}
by~\eqref{min-trace} and~\eqref{tau-intro} (for $g_i$).

Since $X_i=0$ on $\partial N$, we have
\begin{align*}
\int_{M_i\cap N}\Div_{M_i}X_i\,dA 
&= \int_{M_i\cap N}-H\cdot X_i\,dA + \int_{\partial M_i\cap N} X_i\cdot \nu\,dS
\\
&\le \int_{M_i\cap N} h \,|X_i|\,dA 
   +\int_{M_i\cap N}(|H|-h)^+\,|X_i|\,dA + \int_{\partial M_i\cap N} |X_i|\,dS
\\
&\le \int_{M_i\cap N} h f \,|\D f|\,dA + 
    \Gamma\, \int_{M_i\cap N} (|H|-h)^+\,dA +\Gamma\, |\partial M_i|(N)
\\
&\le \int_{M_i\cap N} h f\, |\D f|\,dA + O(1)
\end{align*}
where $O(1)$ stands for any quantity that is bounded independent of $i$.
Thus
\[
\int_{M_i\cap N\cap \BB} (\Div_{M_i}X_i - h f \,|\D f|\,)\,dA 
\le 
\int_{M_i\cap (N\setminus \BB)} (h f\, \,|\D f|  - \Div_{M_i}X_i)\,dA + O(1).
\]
Thus by~\eqref{Div-bound},
\begin{align*}
\int_{M_i\cap N\cap \BB} \delta f\,dA 
&\le
\int_{M_i\cap(N\setminus \BB)} (h f \,|\D f| + \tau)\,dA + O(1)
\\
&\le
(\Gamma h + \tau)\, |M_i|\,(N\setminus \BB) + O(1)
\\
&\le
O(1),
\end{align*}
where the last step is by~\eqref{N-minus-B}.
Since $\BB^*\subset N\cap \BB$ and since $f>\gamma$ on $\BB^*$, this implies that
\begin{equation}\label{final}
\delta \gamma\, |M_i|\, ( \BB^*) 
\le
 O(1).
\end{equation}
However, the left side of~\eqref{final} is unbounded 
since $p\in Z$ and $\BB^*$ is a ball centered at $p$. The contradiction proves the theorem.  
\end{proof}

\begin{corollary}\label{main-theorem-examples-corollary}
Let $M$ be a proper, $m$-dimensional submanifold of $\Omega$ with no boundary
and with mean curvature everywhere $\le h$. 
Then $M$ is an $(m,h)$ subset of $\Omega$.

 More generally, let $M$ be an
 $m$-dimensional varifold (not necessarily rectifiable) 
 of locally bounded first variation with mean curvature everywhere $\le h$
and with no generalized boundary.  Then the support of $M$ is an $(m,h)$ subset of $\Omega$.
\end{corollary}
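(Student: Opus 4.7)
The plan is to realize $M$ (or its support) as the area blow-up set of an explicit sequence of varifolds and then apply Theorem~\ref{main-theorem}. The trick is to scale $M$ itself by larger and larger multiplicities.

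Set $g_i \equiv g$, and let $M_i := iM$ be the varifold obtained from $M$ by multiplying its multiplicity (equivalently, its weight measure) by $i$. Because $\|iM\| = i\,\|M\|$ and $\delta(iM) = i\,\delta M$, the Radon--Nikodym derivative giving the mean curvature vector is unchanged: the mean curvature of $M_i$ coincides pointwise with that of $M$, so it is bounded by $h_i := h$. Similarly the generalized boundary satisfies $|\partial M_i| = i\,|\partial M| = 0$ since $M$ has no generalized boundary. All the hypotheses of Theorem~\ref{main-theorem} are therefore satisfied.

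I then identify the blow-up set~\eqref{Z-def}. For $x \in \spt\|M\|$ and any $r > 0$ one has $\|M\|(\BB(x,r)) > 0$, so $|M_i|(\BB(x,r)) = i\,\|M\|(\BB(x,r)) \to \infty$, giving $x \in Z$. Conversely, if $x \notin \spt\|M\|$, then some $\BB(x,r)$ has $\|M\|$-mass zero, so $|M_i|(\BB(x,r)) = 0$ for every $i$, and $x \notin Z$. Hence $Z = \spt\|M\|$, and Theorem~\ref{main-theorem} yields the second conclusion. For the first conclusion, a properly embedded smooth $m$-submanifold $M$ is identified with the multiplicity-one varifold it carries, and its support is $M$ itself because $M$ is closed in $\Omega$.

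The argument is essentially a one-line reduction to Theorem~\ref{main-theorem}, so I do not expect a genuine obstacle. The only point that deserves verification is that multiplication of a varifold by a positive constant preserves both the pointwise bound on the mean curvature vector and the vanishing of the generalized boundary; this is immediate from the linearity of $\delta V$ and $\|V\|$ in $V$.
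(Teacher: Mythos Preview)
Your proof is correct and follows essentially the same approach as the paper: take $M_i = iM$, observe that the mean curvature bound and the vanishing of the generalized boundary are preserved, identify the area blow-up set as the support of $M$, and invoke Theorem~\ref{main-theorem}. Your write-up is in fact more explicit than the paper's in verifying that $Z=\spt\|M\|$ and that scaling preserves the relevant hypotheses.
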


\begin{proof}
If $M$ is a manifold, let $M_i$ (for $i=1,2,\dots$) be obtained by multiplying the multiplicity of $M$ everywhere by $i$.
Then the area blowup set $Z$ is $M$ itself, and so $M=Z$ is an $(m,h)$
set  by Theorem~\ref{main-theorem}. 
  Similarly, if $M$ is a general $m$-varifold (i.e., a measure on a certain Grassman bundle),
one lets $M_i$ be the result of multiplying $M$ by $i$.  Exactly the same argument shows
that the support of $M$ is an $(m,h)$ set.
\end{proof}

Conversely, if a smooth $m$-dimensional manifold is an $(m,h)$ set, then its mean curvature is everywhere $\le h$.
(This follows from the Barrier Principle~\ref{barrier-theorem}.)

\section{Limits of $(m,h)$ subsets}\label{limits-section}

\begin{theorem}\label{limit-theorem}
Suppose for $i=1,2,3,\dots$ that $Z_i$ is an $(m,h_i)$ subset
of ${\rm C}^1$ Riemannian manifold $(\Omega, g_i)$.
Suppose that the $g_i$ converge in ${\rm C}^1$ to a Riemannian metric $g$,
that the $Z_i$ converge as sets (see remark~\ref{remark:kuratowski}) 
 to a closed set $Z$,
and that the $h_i$ converge to a limit $h$.

Then $Z$ is an $(m,h)$ subset of $(\Omega,g)$.
\end{theorem}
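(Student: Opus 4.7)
The plan is to argue by contradiction, transfer a strict-maximum test function from $Z$ to $Z_i$, then pass to the limit in the $(m,h_i)$ inequality.

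Suppose $Z$ fails to be an $(m,h)$ subset of $(\Omega,g)$ at some point $p\in Z$. By Corollary~\ref{special-f-corollary}, after moving $p$ slightly (which we may do since the failure set is closed and $Z$ is Hausdorff-closed), we may choose a smooth $f:\Omega\to\RR$ such that
\[
 \trm_g(D^2_g f(p)) > h\,|Df(p)|_g, \qquad f(p)=0,
\]
with $f(x)<0$ for every $x\in Z\setminus\{p\}$, and with $\{f\ge a\}$ compact for every $a\in\RR$.

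Next I would produce, for all large $i$, a point $p_i\in Z_i$ at which $f|Z_i$ attains a maximum, with $p_i\to p$. Fix any neighborhood $V$ of $p$ and $\eta>0$ small. Because $f$ is proper and $p$ is the unique maximizer of $f|Z$ with value $0$, the compact set $K:=\{f\ge -\eta\}\setminus V$ satisfies $\sup_{Z\cap K} f<0$; using the Hausdorff convergence $Z_i\to Z$ restricted to the compact set $\{f\ge -\eta\}$, continuity of $f$ gives $\sup_{Z_i\cap K} f<-\eta/2$ for all $i$ large. On the other hand, picking $q_i\in Z_i$ with $q_i\to p$ shows $Z_i\cap\{f\ge -\eta/2\}$ is a nonempty compact set; its maximizer $p_i$ of $f$ must then lie in $V$, and a point in $Z_i$ with $f<-\eta$ cannot beat it, so $p_i$ is in fact a global maximum of $f|Z_i$. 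Letting $V$ shrink, $p_i\to p$.

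Applying the $(m,h_i)$ property of $Z_i$ at $p_i$ (with respect to $g_i$) gives
\[
 \trm_{g_i}(D^2_{g_i}f(p_i)) \le h_i\,|Df(p_i)|_{g_i}
\]
for all large $i$. Since $f$ is smooth and the $g_i\to g$ in $C^1$, the Christoffel symbols of $g_i$ converge in $C^0$ to those of $g$; hence the $(0,2)$-tensor $D^2_{g_i}f$ converges in $C^0$ to $D^2_g f$, and $|Df|_{g_i}$ converges in $C^0$ to $|Df|_g$. The functional $\trm$ (sum of the smallest $m$ eigenvalues, computed with respect to the metric) depends continuously on both the tensor and the metric. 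Since $p_i\to p$ and $h_i\to h$, passing to the limit yields
\[
 \trm_g(D^2_g f(p)) \le h\,|Df(p)|_g,
\]
contradicting the strict inequality chosen above.

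The only nontrivial step is the max-transfer, where both the properness of $f$ and the Hausdorff (rather than merely pointwise) convergence of $Z_i$ are essential to ensure that the maximizer $p_i\in Z_i$ exists and is localized near $p$; the passage to the limit is then automatic from the $C^1$ convergence of metrics together with the smoothness of $f$ guaranteed by Corollary~\ref{special-f-corollary}.
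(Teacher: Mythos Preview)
Your proof is correct and follows essentially the same argument as the paper's: contradiction via a test function with a strict, proper, unique maximum on $Z$, transfer of the maximum to $Z_i$ using properness and Hausdorff convergence, then passage to the limit using $C^1$ convergence of the metrics. The only cosmetic differences are that you invoke Corollary~\ref{special-f-corollary} (smooth $f$) where the paper uses Lemma~\ref{special-f-lemma} ($C^2$ suffices), and you spell out the max-transfer step in more detail than the paper's one-line appeal to~\eqref{limits:strict-minimum} and~\eqref{limits:proper}; in that step, your bound $\sup_{Z_i\cap K}f<-\eta/2$ requires that $\eta$ be chosen small relative to $-\sup_{Z\setminus V}f$, which you should make explicit.
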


\begin{proof}
We prove the theorem by contradiction.
Suppose that $Z$ fails to be an $(m,h)$ subset at some point $p\in Z$.
By Lemma~\ref{special-f-lemma}, there is a ${\rm C}^2$ function $f:\Omega\to \RR$
such that 
\begin{gather}
    \label{limits:strict-inequality}  \trm(\D^2f(p)) > h\, |\D f(p)|  \\
    \label{limits:strict-minimum} \text{$f(x)< f(p)$ for every $x\in Z\setminus \{p\}$, and} \\
    \label{limits:proper}\text{$\{f\ge a\}$ is compact for every $a\in \RR$}.
\end{gather}

Now $Z_i$ is nonempty for all sufficiently large $i$, so by properness~\eqref{limits:proper},
$f|Z_i$ will attain its maximum at a point $p_i$.   
Furthermore, $p_i$ converges
to $p$ as $i\to\infty$ (by~\eqref{limits:strict-minimum} and~\eqref{limits:proper}).
 By~\eqref{limits:strict-inequality},
and by  the convergence of $g_i$ to $g$,
$h_i$ to $h$, and $p_i$ to $p$,
\[
  [\trm (\D^2f(p_i)) - h_i\,|\D f(p_i)|]_{g_i}>0,
\]
for all sufficiently large $i$, 
contradicting the hypothesis that $Z_i$ is an $(m,h_i)$ subset of $(\Omega,g_i)$.
\end{proof}

\begin{corollary}\label{blowup-corollary}
Suppose $Z$ is an $(m,h)$ subset of $(\Omega,g)$, where $\Omega$ is an open
subset of $\RR^n$  containing the origin, 
$g$ is a ${\rm C}^1$ Riemannian metric on $\Omega$, 
and $g_{ij}(0)$ is the Euclidean metric $\delta_{ij}$.
Let $\lambda_i$ be a sequence positive numbers tending to $\infty$, and suppose
that the dilated sets
\[
   \lambda_i Z := \{ \lambda_i p: p\in Z\}
\]
converge to a limit set $Z^*$.  Then $Z^*$ is an $(m,0)$ subset of $\RR^n$ (with respect to
the Euclidean metric.)
\end{corollary}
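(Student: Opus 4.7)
The plan is to realize each dilated set $\lambda_i Z$ as an $(m, h_i)$ subset of $\RR^n$ equipped with a rescaled Riemannian metric $g_i$, chosen so that $g_i \to \delta$ in $C^1$ on compact sets and $h_i \to 0$; Theorem~\ref{limit-theorem} then immediately delivers that the Hausdorff limit $Z^*$ is an $(m, 0)$ subset of $(\RR^n, \delta)$.

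To set this up, I would first let $\psi_i$ denote the dilation diffeomorphism sending $Z$ to $\lambda_i Z$, and equip the image with the pushforward metric $(\psi_i)_* g$, so that $\psi_i$ is tautologically an isometry. Because the $(m, h)$ condition of Definition~\ref{(m,h)-definition} is phrased in purely intrinsic terms (the Hessian and gradient of scalar test functions), $\lambda_i Z = \psi_i(Z)$ is automatically an $(m, h)$ subset of $(\psi_i(\Omega), (\psi_i)_* g)$. Next I would rescale the metric by a constant conformal factor: under $\tilde g = c^2 g$ with constant $c$, a short calculation shows that $|Df|_{\tilde g} = c^{-1} |Df|_g$, while the Hessian as a $(0, 2)$-tensor is unchanged (constant conformal factors leave the Christoffel symbols invariant), so $\trm$ of the Hessian-as-operator scales as $c^{-2}$. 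Substituting into $\trm(D^2 f) \le h|Df|$ yields $\trm(\tilde D^2 f) \le (h/c) |\tilde D f|$, i.e., an $(m, h)$ set in $g$ is an $(m, h/c)$ set in $c^2 g$.

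I would then pick the rescaling constant so that the resulting metric $g_i$ on $\RR^n$ coincides, in Euclidean coordinates, with $g$ composed with the inverse of the dilation and evaluated near the origin of $\Omega$. The $C^1$-regularity of $g$ together with the normalization $g_{ij}(0) = \delta_{ij}$ then forces $g_i \to \delta$ in $C^1$ on every compact subset of $\RR^n$, while the associated $h_i$ tends to $0$. Since moreover $\lambda_i Z \to Z^*$ in the Hausdorff topology by hypothesis, all assumptions of Theorem~\ref{limit-theorem} are met, and the theorem concludes that $Z^*$ is an $(m, 0)$ subset of $(\RR^n, \delta)$.

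The only substantive step in the plan is the scaling rule for $(m, h)$ sets under a constant conformal change of metric; the principal thing to get right is the choice of conformal factor so that the two limits $g_i \to \delta$ and $h_i \to 0$ occur \emph{simultaneously}, after which Theorem~\ref{limit-theorem} does all the remaining work.
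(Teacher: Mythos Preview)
Your proposal is correct and follows essentially the same route as the paper: define the rescaled metric $g_i$ on $\lambda_i\Omega$ as the pushforward of $g$ under $x\mapsto\lambda_i x$ multiplied by $\lambda_i^2$, observe that $\lambda_i Z$ is then an $(m,h/\lambda_i)$ subset for $g_i$, note that $g_i\to\delta$ in $C^1$ and $h/\lambda_i\to 0$, and invoke Theorem~\ref{limit-theorem}. You have simply decomposed the paper's one-step rescaling into an isometric pushforward followed by a constant conformal change, and spelled out the scaling law $(m,h)\mapsto(m,h/c)$ under $g\mapsto c^2 g$ more explicitly than the paper does.
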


\begin{proof}
Let $g_i$ be the metric on $\lambda_i\Omega$ obtained from $g$ by dilation.
(In other words, $g_i$ is the result of pushing forward the metric $g$ by the map $x\mapsto \lambda_ix$,
and then multiplying by $\lambda_i^2$.)

Then $Z_i$ is an $(m, h/\lambda_i)$ subset of $(\Omega_i, g_i)$, so by 
Theorem~\ref{limit-theorem},
$Z$ is an $(m,0)$ subset of $\RR^n$.
\end{proof}

\section{The Constancy Theorem}\label{constancy-section}
   
\begin{theorem}[Constancy Theorem]\label{constancy-theorem}
Let $\Omega$ be an open subset of a manifold with ${\rm C}^1$ Riemannian metric $g$.
Let $Z$ be an $(m,h)$ set in $(\Omega,g)$.
Suppose $Z$ is a subset of a connected, $m$-dimensional, properly embedded submanifold $M$
of $\Omega$.
Then $Z=\emptyset$ or $Z=M$.  In other words, the characteristic function of $Z$ is constant on $M$.
\end{theorem}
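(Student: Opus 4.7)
The plan is to show that $Z\cap M$ is both closed and open in $M$; since $M$ is connected and $Z\subset M$, this will force $Z=\emptyset$ or $Z=M$. Closedness is automatic because $Z$ is closed in $\Omega$, so the real content is openness. I argue openness by contradiction: if $p\in Z$ fails to be interior to $Z$ in $M$, then there exist $q_i\in M\setminus Z$ with $q_i\to p$. For each large $i$, choose $p_i\in Z$ realizing the distance from $q_i$ to $Z$ within a fixed compact neighborhood of $p$, and set $\lambda_i=|q_i-p_i|$; then $p_i\to p$, $\lambda_i\to 0$, and the open ball $\BB(q_i,\lambda_i)$ is disjoint from $Z$.

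The next step is to blow up around $p_i$ at scale $\lambda_i^{-1}$. The rescaled metrics converge to a Euclidean inner product in $C^1$, and the $(m,h)$ condition rescales to $(m,h\lambda_i)$; by Theorem~\ref{limit-theorem} and Hausdorff compactness, a subsequence of the rescaled sets converges to an $(m,0)$ set $Z^*\subset\RR^n$. Since $Z\subset M$ and $M$ is $C^1$, $Z^*\subset T_pM$; since $p_i\in Z$, $0\in Z^*$; and since $(q_i-p_i)/\lambda_i$ is a unit vector whose (subsequential) limit $v$ lies in $T_pM$, the disjointness $\BB(q_i,\lambda_i)\cap Z=\emptyset$ passes to the limit as $\BB(v,1)\cap Z^*=\emptyset$. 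Equivalently, $u\cdot v\le |u|^2/2$ for every $u\in Z^*$.

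Applying Corollary~\ref{blowup-corollary} a second time, now to $Z^*$ at the origin, I obtain a further blow-up $Z^{**}$ which is again an $(m,0)$ subset of $\RR^n$ contained in $T_pM$ and containing $0$. Under the dilation, the quadratic inequality becomes linear: $u\cdot v\le 0$ for every $u\in Z^{**}$. So $Z^{**}\subset T_pM\cap\{x:v\cdot x\le 0\}$---a half-space in the tangent plane.

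Finally, I derive a contradiction with the $(m,0)$ property of $Z^{**}$ via an explicit test function. Choose orthonormal coordinates $(x_1,x',x'')\in\RR\times\RR^{m-1}\times\RR^{n-m}$ with $v=e_1$ and $T_pM=\RR^m\times\{0\}$, and set
\[
f(x)=x_1+m\,x_1^2-|x'|^2+m\,|x''|^2.
\]
On $Z^{**}$ one has $x''=0$ and $x_1\le 0$; since $x_1+mx_1^2=x_1(1+mx_1)\le 0$ whenever $|x_1|\le 1/m$, we have $f|_{Z^{**}}\le 0=f(0)$ in a neighborhood of $0$, so $0$ is a local maximum of $f|_{Z^{**}}$. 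But $|Df(0)|=1$ and $D^2f(0)$ has eigenvalues $2m$ (multiplicity $n-m+1$) and $-2$ (multiplicity $m-1$), so $\trm(D^2f(0))=-2(m-1)+2m=2>0$, violating the $(m,0)$ condition. The chief subtlety is the need for \emph{two} blow-ups rather than one: a single blow-up only yields the quadratic ``ball'' barrier $u\cdot v\le |u|^2/2$, which any natural test function would have to pay for in its own quadratic term; the second blow-up converts the ball into a flat hyperplane and permits the simple polynomial $f$ above.
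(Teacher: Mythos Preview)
Your proof is correct, but it is more elaborate than the paper's, and the extra step is avoidable.  The paper also argues by contradiction and blow-up, but it sets things up so that \emph{one} blow-up suffices.  Instead of choosing a sequence $q_i\to p$ and nearest points $p_i\in Z$ (which makes the excluded ball $\BB(q_i,\lambda_i)$ shrink at the same rate as the rescaling), the paper first invokes Lemma~\ref{closed-set-lemma} to find a \emph{fixed} geodesic ball $B\subset M\setminus Z$ whose boundary contains a point $p\in Z$, and then dilates about $p$.  Because $B$ has fixed positive radius, after dilation by $\lambda_i\to\infty$ the set $\lambda_i(M\setminus B)$ converges to a closed half-plane $H$ of $T_pM$ with $0\in\partial H$, and the limit set $Z^*$ already lies in $H$.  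Thus the half-space barrier appears after a single rescaling, and the simpler test function $f(x)=x_1+x_1^2+\sum_{i>m}x_i^2$ (no $-|x'|^2$ term, no factor of $m$) immediately yields $\trm(D^2f(0))=2>0$.

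Your diagnosis that a second blow-up is forced is accurate \emph{given your setup}: with only the unit-ball barrier $u\cdot v\le|u|^2/2$, one can check that no $C^2$ test function can make $\trm(D^2f(0))>0$ while keeping $f\le 0$ on the constraint set near $0$, since the curvature of the excluded ball forces the tangential Hessian of $f$ to be too negative.  So your second blow-up is a legitimate workaround---but the paper shows that a smarter choice of the initial ball (fixed radius rather than shrinking) sidesteps the issue entirely.  Both approaches ultimately reduce to the same endgame: an $(m,0)$ set in a half-plane of $\RR^m\subset\RR^n$ containing the origin, contradicted by an explicit quadratic.
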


\begin{proof}
The result is essentially local, so we may assume that $\Omega\subset \RR^n$.
Suppose the result is false, i.e., that $Z$ is a nonempty proper subset of $M$.
Then $M\setminus Z$ contains an open geodesic ball $B$ 
whose boundary contains a point $p\in Z$.  (See Lemma~\ref{closed-set-lemma} below if that is not clear.)
By translation, we can assume that $p=0$.  By making a linear change of coordinates, we may
assume that the metric $g$ is the Euclidean metric at $0$ (i.e., that $g_{ij}(0)=\delta_{ij}$.)

Now let $\lambda_i$ be a sequence of positive numbers such that $\lambda_i\to\infty$.
Note that the sets
\[
  \lambda_i(M\setminus B) := \{ \lambda_i x: x\in M\setminus B\}
\]
converge to a closed halfspace $H$ of $\Tan_0M$ with $0\in\partial H$.
Thus by passing to a subsequence, we can assume that the sets $\lambda_iZ$
converge to a closed subset $Z^*$ of $H$ with $0\in Z^*\cap \partial H$.
By rotating, we can assume that $H$ is the halfplane
\[
  H = \{x\in \RR^n: \text{$x_1\le 0$ and $x_i=0$ for all $i>m$}  \}.
\]
By Corollary~\ref{blowup-corollary}, $Z^*$ is an $(m,0)$ subset of $\RR^n$ (with respect to
the Euclidean metric.)

Now consider the function
\begin{align*}
&f: \RR^n\to\RR  \\
&f(x) = x_1 + (x_1)^2 + \sum_{i>m} (x_i)^2.
\end{align*}
Note that $f|H$ has a local maximum at $0$, so
  $f | Z^*$ has a local maximum at $0$.
But
\[
  \trm (\D^2f(0)) = 2 > 1 = |\D f(0)|,
\]
 contradicting the fact that $Z^*$ is an $(m,0)$ set.
\end{proof}

\begin{corollary}\label{constancy-corollary}
Suppose that $Z$ is an $(m,h)$ subset of $\Omega$.
Suppose also that $Z$ is contained in $M$, where $M$ is either
\begin{enumerate}
\item a ${\rm C}^1$ submanifold of dimension $\le m-1$, or
\item a connected, $m$-dimensional, ${\rm C}^1$ manifold-with-boundary such that the boundary is nonempty.
\end{enumerate}
Then $Z=\emptyset$.
\end{corollary}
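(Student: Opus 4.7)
The plan is to establish both cases by a tangent blow-up at a carefully chosen point $p\in Z$ and then exhibit an explicit quadratic test function violating the $(m,0)$ property of the blow-up limit (cf.~Corollary~\ref{blowup-corollary} and the proof of Theorem~\ref{constancy-theorem}).

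\textbf{Case (1).} Suppose $Z\neq\emptyset$ and pick $p\in Z$. Since the $(m,h)$ property is local, I work in a chart around $p$ with $p=0$, $g_{ij}(0)=\delta_{ij}$, and $\Tan_0 M = L:=\operatorname{span}(e_1,\dots,e_k)$, where $k=\dim M\le m-1$. Choose $\lambda_i\to\infty$ and, after passing to a subsequence, let $Z^*\subset\RR^n$ be the Hausdorff-on-compact-sets limit of $\lambda_i Z$; then $0\in Z^*$. Because $M$ is $C^1$ with tangent space $L$ at $0$, the dilations $\lambda_i M$ converge locally to $L$, so $Z^*\subset L$. By Corollary~\ref{blowup-corollary}, $Z^*$ is an $(m,0)$ subset of $\RR^n$. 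Now test with
\[
f(x) := x_{k+1}^2+\cdots+x_n^2.
\]
Since $Z^*\subset L$, $f\equiv 0$ on $Z^*$, so $0$ is a local maximum of $f|_{Z^*}$; one has $Df(0)=0$, and $D^2 f(0)$ has eigenvalues $0$ (multiplicity $k$) and $2$ (multiplicity $n-k$). Using $k\le m-1<m\le n$,
\[
\trm(D^2 f(0))=2(m-k)\ge 2 > 0 = 0\cdot|Df(0)|,
\]
contradicting that $Z^*$ is an $(m,0)$ set.

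\textbf{Case (2).} Suppose $Z\neq\emptyset$. If $Z\subset\partial M$, then $Z$ lies in a $C^1$ submanifold of dimension $m-1$ and case~(1) already gives $Z=\emptyset$, a contradiction. Otherwise, pick $q\in Z\cap\operatorname{int}(M)$. Applying Theorem~\ref{constancy-theorem} in a small neighborhood $U$ of $q$ (in which $M$ restricts to a properly embedded, connected, $m$-dimensional $C^1$ submanifold without boundary) shows that $Z\cap U=M\cap U$. A standard open-closed argument in the connected manifold $\operatorname{int}(M)$ then yields $Z\supset\operatorname{int}(M)$, and by closedness $Z\supset\overline{\operatorname{int}(M)}=M$. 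In particular $\partial M\subset Z$. Fix $p\in\partial M$ and redo the blow-up: in suitable coordinates at $p$, the dilations $\lambda_i M$ converge to the tangent half-space
\[
H = \{x\in\RR^n:\,x_1\le 0,\ x_{m+1}=\cdots=x_n=0\},
\]
and since $Z=M$ near $p$, the blow-up $Z^*$ equals $H$. By Corollary~\ref{blowup-corollary}, $H$ is an $(m,0)$ set; but the function $f(x)=x_1+x_1^2+\sum_{i>m}x_i^2$ from the proof of Theorem~\ref{constancy-theorem} has $f|_H$ locally maximized at $0$ with $\trm(D^2 f(0))=2>1=|Df(0)|$, a contradiction.

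\textbf{Main obstacle.} The technical heart is the blow-up in case~(1): one must confirm that the tangent plane $L$ controls the dilated limit, i.e.~$Z^*\subset L$, so that the perpendicular quadratic $f$ is automatically a local maximum on $Z^*$ with the desired Hessian trace. The rest of case~(2) is bookkeeping—Theorem~\ref{constancy-theorem} applied locally on $\operatorname{int}(M)$, together with the half-space test function imported verbatim from its proof.
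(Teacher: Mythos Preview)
Your proof is correct, but it takes a longer route than the paper. The paper's argument is two sentences: in either case, $M$ is (locally) contained in a connected, properly embedded, $m$-dimensional $C^1$ submanifold $\hat{M}$ of $\Omega$ without boundary---in case~(1) by thickening $M$ in $m-\dim M$ normal directions, in case~(2) by extending $M$ across $\partial M$ via a collar. Since $Z\subset M\subsetneq\hat{M}$, the Constancy Theorem~\ref{constancy-theorem} immediately gives $Z=\emptyset$. Your case~(1) argument is genuinely different and has its own appeal: it bypasses Theorem~\ref{constancy-theorem} entirely, blowing up directly and using a perpendicular quadratic test function to exhibit the failure of the $(m,0)$ condition; this makes transparent \emph{why} containment in a lower-dimensional set is fatal. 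Your case~(2), on the other hand, effectively reproves the machinery of Theorem~\ref{constancy-theorem}---the local application on $\operatorname{int}(M)$, the open--closed argument, and then a boundary blow-up with the very same test function---where the paper's extension trick dispatches the case in one line.
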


\begin{proof}
Note that (in either case) $M$ is contained in an $m$-dimensional, ${\rm C}^1$ manifold $\hat{M}$ without boundary.
Now apply the Constancy Theorem~\ref{constancy-theorem} to $Z$ and $\hat{M}$.
\end{proof}

\begin{lemma}\label{closed-set-lemma}
Let $M$ be a connected Riemannian manifold without boundary.
Let $K$ be a proper, nonempty, closed subset of $M$.  Then $M\setminus K$
contains an open geodesic ball $B$ whose boundary contains
a point in $K$.
\end{lemma}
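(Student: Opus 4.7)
The plan is to produce the ball $B$ by starting at a point near the boundary of $K$ and taking a geodesic ball around it whose radius equals the distance to $K$. Because $M$ is not assumed complete, the main subtlety is ensuring that the distance from a chosen center $q$ to $K$ is actually attained by some point $p^* \in K$ (rather than merely being an infimum), so that $p^*$ lies on the topological boundary of the open geodesic ball of that radius. I will handle this by working entirely inside a small totally normal (geodesically convex) neighborhood, where closed balls of small radius are compact and coincide with $\exp$-images of Euclidean balls.

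First I would observe that $\partial K = K \cap \overline{M \setminus K}$ is nonempty: since $K$ is a proper, nonempty, closed subset of the connected manifold $M$, the set $K$ cannot be open, and any point $p \in K$ with arbitrarily close points outside $K$ lies in $\partial K$. Pick such a $p$. Next I would invoke the standard fact that $p$ has a geodesically convex normal neighborhood: there exists $\rho > 0$ such that the closed ball $\overline{B(p, 2\rho)}$ is compact and, for every $y$ in this ball, $\exp_y$ is a diffeomorphism from a Euclidean ball in $T_yM$ of radius $>2\rho$ onto an open set containing $\overline{B(p, 2\rho)}$; in particular, for all such $y$ and all $0 < s \le \rho$, the open ball $B(y, s)$ equals $\exp_y(B(0,s))$ and its topological boundary is exactly $\{x : d(y, x) = s\}$.

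Since $p \in \partial K$, I can choose $q \in M \setminus K$ with $d(q, p) < \rho$. Set $r = \dist(q, K)$. The set $K \cap \overline{B(p, 2\rho)}$ is a nonempty compact subset of $M$ (it contains $p$), so the continuous function $k \mapsto d(q, k)$ attains its minimum there at some $p^*$. For any $k \in K \setminus \overline{B(p, 2\rho)}$ one has $d(q, k) \ge d(p, k) - d(p, q) > 2\rho - \rho > d(q, p) \ge d(q, p^*)$, so this minimum equals $r$ and $d(q, p^*) = r \le d(q, p) < \rho$.

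Finally, $B(q, r) \subset M \setminus K$ because any $x \in B(q, r)$ satisfies $d(x, q) < r = \dist(q, K)$ and hence $x \notin K$. Moreover, $B(q, r)$ lies in the convex normal neighborhood (since $d(p, x) < \rho + r < 2\rho$), so by the choice of $\rho$ its topological boundary is exactly the distance-$r$ sphere around $q$, which contains $p^* \in K$. This is the desired ball. The only real obstacle is the completeness issue addressed in the second paragraph; once one is inside a convex normal neighborhood everything reduces to a Euclidean-style argument.
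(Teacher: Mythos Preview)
Your proof is correct and follows essentially the same approach as the paper: pick a boundary point of $K$, choose a nearby point $q\in M\setminus K$, and take the open geodesic ball about $q$ of radius $\dist(q,K)$. You are more explicit about handling non-completeness (working inside a totally normal neighborhood to guarantee the distance is attained and that the distance sphere is the topological boundary), whereas the paper simply chooses the center close enough that the relevant closed geodesic ball is compact; but the underlying idea is identical.
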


\begin{proof}
Let $q$ be a point in the boundary of $K$, i.e, in $K\cap\overline{M\setminus K}$.
Choose a point $p\in M\setminus K$ sufficiently close to $q$ that the
closed geodesic ball of radius $\dist(p,q)$ about $p$ is compact.
Then the open geodesic ball of radius $\dist(p,K)$ centered at $p$ has the desired
properties.
\end{proof}

\section{Versions of Allard's Regularity Theorem}\label{allard-section}

We begin with the case of minimal varieties:

\begin{theorem}\label{extended-allard-theorem}
Let $\Omega$ be a smooth Riemannian manifold (not necessarily complete).
Let $M_i\subset\Omega$ be a sequence of $m$-dimensional, properly embedded minimal submanifolds 
without boundary.   
Suppose that the $M_i$ converge as sets
(see remark~\ref{remark:kuratowski}) to a  
subset of an $m$-dimensional, connected, smoothly embedded submanifold $M$ 
of $\Omega$.
Suppose also that some point in $M$ has a neighborhood $U\subset \Omega$ such that $M_i\cap U$ 
converges weakly to $M\cap U$ with multiplicity one, i.e., such that
 \[
    \int_{M_i} \phi\,dA \to \int_{M}\phi\,dA \tag{*}
 \]
 for every continuous, compactly supported function $\phi: U\to\RR$.
Then $M_i$ converges to $M$ smoothly and with multiplicity one everywhere.

The result remains true if each $M_i$ is minimal with respect to a Riemannian metric $g_i$
provided the metrics $g_i$ converge smoothly to a limit Riemannian metric.
The result is also true if each $M_i$ is a $g_i$-stationary integral varifold, or, more generally,
a $g_i$-stationary varifold with density $\ge 1$ at every point in its support.
\end{theorem}

\begin{proof}
By Theorem~\ref{main-theorem}, the area blowup set $Z$ is an $(m,0)$ set.
By hypothesis, the area blowup set $Z$ is disjoint from $U$ and is therefore a proper subset
of $M$.  
Hence by the Constancy Theorem~\ref{constancy-theorem}, $Z=\emptyset$.  In other words, the areas of the $M_i$
are uniformly bounded on compact subsets of $\Omega$.
Thus (after passing to a subsequence)
the $M_i$ converge in the varifold sense to a stationary varifold $V$ supported in $M$.

By the constancy theorem for stationary varifolds 
 (\cite{allard-first-variation}*{\S 4.6(3)} or
     \cite{simon-book}*{\S41}),  $V$ is  $M$ with some constant
multiplicity.   By hypothesis, the multiplicity is equal to $1$ in $U$.  Therefore it is equal to $1$
everywhere.  But then the convergence $M_i\to M$ is smooth by the Allard Regularity Theorem. 
(More precisely, the convergence is $C^{1,\alpha}$ for some $\alpha>0$ by Allard's theorem,
which then implies by standard elliptic regularity that the convergence is smooth.)
\end{proof}

\begin{remark}
In case the $M_i$ are smooth minimal submanifolds, the proof actually requires very little geometric measure
theory.  In particular, the existence of a varifold limit and the constancy theorem follow rather directly from
the definition of varifold.  And if the $M_i$'s are smooth, the required version of the Allard Regularity Theorem
has a very elementary proof: see \cite{white-local-regularity}*{Theorem~1.1}.  (The proof
in~\cite{white-local-regularity} is for compact $M$, but with minor modification, 
the proof works for noncompact $M$.)
\end{remark}

\begin{remark}\label{multiplicity-remark}
The multiplicity one hypothesis~\thetag{*} here (and also in Theorem~\ref{more-extended-allard-theorem})
can be weakened to:
\[
   \limsup_i \int_{M_i}\phi\,dA \le \int_M \phi\,dA
\]
for every continuous, nonnegative, compactly supported function $f:U\to\RR$,
provided we assume that the $M_i$ converge as a sets to a {\em nonempty} subset
of $M$.  The proof is almost exactly as before.
\end{remark}

Readers interested in minimal (rather than bounded mean curvature) varieties may skip
the rest of this section.

\begin{theorem}\label{more-extended-allard-theorem}
Let $\Omega$ be a smooth Riemannian manifold (not necessarily complete).
Let $M_i\subset \Omega$ be a sequence of $m$-dimensional submanifolds without boundary
such that
\begin{equation}\label{eq:p-hypothesis}
   \limsup_i\int_{M_i\cap K} ( |H|-h)^+)^p\,dA  < \infty
\end{equation}
for some some $h<\infty$, some $p>m$, and for every compact $K\subset \Omega$.  
Suppose that the $M_i$ converge as sets
(see remark~\ref{remark:kuratowski}) to a  
subset of an $m$-dimensional, connected, ${\rm C}^1$ embedded submanifold $M$ 
of $\Omega$.
Suppose also that some point in $M$ has a neighborhood $U\subset \Omega$ such that $M_i\cap U$ 
converges weakly to $M\cap U$ with multiplicity one.
Then $M_i$ converges to $M$ in ${\rm C}^1$.  
Furthermore, the $M_i$ are locally uniformly bounded in $C^{1,1-m/p}$:
\[
  \limsup_i \left( \sup_{x, y\in M_i\cap K,\, x\ne y} \frac{d(\Tan(M_i,x), \Tan(M_i,y))}{d(x,y)^{1-m/p}} \right) < \infty
\]
for every compact $K\subset \Omega$.

The result remains true if the $M_i$ are integer-multiplicity rectifiable varifolds, or, more generally,
if the $M_i$ are varifolds with the gap $\alpha$ property (see Definition~\ref{gap-alpha-definition} below) 
for some $\alpha>1$.
\end{theorem}

The multiplicity one hypothesis in $U$ can be weakened slightly; see Remark~\ref{multiplicity-remark}. 

Unlike the minimal case (Theorem~\ref{extended-allard-theorem}), 
Theorem~\ref{more-extended-allard-theorem} fails for varifolds if the gap $\alpha$ hypothesis
is replaced by the weaker hypothesis that the density is $\ge 1$ almost everywhere.
See~\S\ref{counterexample-section} for an example of such failure.

\begin{definition}\label{gap-alpha-definition}
Let $V$ be an rectifiable $m$-varifold and $\alpha>1$.
We say that $V$ has the {\em gap $\alpha$ property} if
the density $\Theta(V,x)$ of $V$ at $x$ belongs to
\[
     \{1\} \cup [\alpha, \infty)
\]
for $\mu_V$ almost every $x$.
\end{definition}

\begin{proof}[Proof of Theorem~\ref{more-extended-allard-theorem}]
Let $h'=h+1$.
Then
\[
   (|H|-h')^+ \le ((|H|-h)^+)^p,
\]
so, by~\eqref{eq:p-hypothesis}, 
\[
   \limsup_i\int_{M_i\cap K} ( |H|-h')^+\,dA  < \infty
\]
for every compact $K\subset \Omega$.
Hence, exactly as in the proof of Theorem~\ref{extended-allard-theorem}, 
the areas of the $M_i$ must be uniformly bounded on compact sets.  Thus 
by passing to a subsequence, we can assume that the $M_i$'s converge to a limit varifold $V$.
Also, 
\begin{align*}
 \left( \int_{M_i\cap K} |H|^p\,dA \right)^{1/p}
&\le
\left( \int_{M_i\cap K} h^p\,dA \right)^{1/p}
+ 
\left( \int_{M_i\cap K} ((|H|-h)^+)^p\,dA \right)^{1/p}
\\
&\le
h \area(M_i\cap K)^{1/p}
+
 \left( \int_{M_i\cap K} ((|H|-h)^+)^p\,dA \right)^{1/p}
\end{align*}
Since the area of $M_i\cap K$ is bounded as $i\to\infty$,
the hypothesis~\eqref{eq:p-hypothesis} implies that
\[
 \limsup_i \left( \int_{M_i\cap K} |H|^p\,dA \right)^{1/p} < \infty.
\]

Recall that the density of $V$ at $x$ is
\[
  \Theta(V,x) = \lim_{r\to 0}\frac{|V|\BB(x,r)}{\omega_mr^m},
\]
provided the limit exists, where $\omega_m$ is the volume of the unit ball in $\RR^m$.
By the monotonicity formula for the $M_i$'s (which implies the same monotoncity for $V$),
$\Theta(V,x)$ exists everywhere and is upper semicontinuous in $x$, and it also has the 
property:
\[
   \text{$\Theta(V,x)\ge \limsup\Theta(M_i,x_i)$ provided $x_i\to x$}.
\]
(See, for example,~\cite{simon-book}*{\S17.8} for proof of these upper-semicontinuity
properties.)
In particular, $\Theta(V,x)\ge 1$ for every point in $\spt(V)$.

Now let $W$ be the set of points where $\Theta(V,x)=1$.  By hypothesis, $W$ is a nonempty
subset of $M$.  

We claim that $W$ is a relatively open subset of $M$.  To see that, suppose $x\in W$, i.e., that
$\Theta(V,x)=1$.  By the Allard Regularity Theorem, there is a open ball $B\subset \Omega$ around $x$
such that for all sufficiently large $i\ge i_0$, $\spt(M_i)\cap B$ is a ${\rm C}^1$ submanifold
and such that the $\spt(M_i)\cap U$ converge in ${\rm C}^1$ to $M\,\cap\, U$.   
By the upper-semicontinuity of density~\cite{simon-book}*{\S17.8},
and by replacing $B$ by a smaller open ball around $x$ and by replacing $i_0$ by a larger number,
we can assume that $\Theta(M_i,\cdot)<\alpha$ at almost all points of $\spt(M_i)\cap B$ for $i\ge i_0$.
By the gap $\alpha$ property, $\Theta(M_i,\cdot)=1$ at almost all points of $\spt(M_i)\cap B$ for $i\ge i_0$.
Hence the measures $\mu_{M_i}$ and $\HH^m\llcorner (\spt(M_i))$ coincide in $B$,
which implies (because of the ${\rm C}^1$ convergence) that $\mu_V$ and $\HH^m\llcorner M$ also coincide
in $B$. That in turn implies that $\Theta(V,\cdot)\equiv 1$ in $B\cap M$, so $B\cap M\subset W$.
This proves that $W$ is a relatively open subset of $M$.

Now if $W\ne M$, then  there would be an open geodesic ball $D$ in $W$ and a point $x\in \overline{D}\cap W^c$. (Recall Lemma~\ref{closed-set-lemma}.)
By definition of $W$, 
\[
   \Theta(V,x)\ne 1.
\]
Now the tangent cone $C$ to $V$ at $x$ is a plane
with multiplicity $\Theta(V,x)$. 
 (The multiplicity is constant on the plane by the
 constancy theorem for stationary varifolds~(\cite{allard-first-variation}*{\S4.6(3)} or
     \cite{simon-book}*{\S41}).
However, the multiplicity is equal to $1$ on a halfplane of that cone, namely the tangent halfplane to $D$ at $x$,
so $\Theta(V,x)=1$, contradicting the fact that $x\notin W$.
The contradiction proves that $W=M$, i.e, that $\Theta(V,\cdot)\equiv 1$
on $M$.  The conclusion then follows from the Allard Regularity Theorem.
\end{proof}

\begin{a-counterexample}\label{counterexample-section}
As mentioned above, Theorem~\ref{more-extended-allard-theorem} fails if the gap $\alpha$ hypothesis is replace by the hypothesis that
the density $\ge 1$ almost everywhere. We now give an example of that failure.
Let $g:\RR\to \RR$ be a smooth function such that $g(x)=0$ if and only if $|x|\ge 1$.
Let $S_n$ be the union of the graph of $(1/n)g$ and the $x$-axis (i.e, the graph of the $0$ function).
Let $\phi:[1,\infty)\to [1,2]$  be a smooth function such that $\phi(t)=2$ for $1\le t\le 2$ and such
that $\phi(x)=1$ for $x\ge 3$.

Now let $M_n$ be the rectifiable varifold whose support is $S_n$ and whose
density $\Theta(V,(x,y))$ at $(x,y)\in S_n$ is $1$ if $|x|<1$ and $\phi(|x|)$ for $|x|\ge 1$.
Let $M$ be the $x$-axis.  Then $M_n$, $M$, and $\Omega=\RR^2$ satisfy all 
the hypotheses except for the gap $\alpha$ hypothesis.  Also, $\Theta(M_n,\cdot)\ge 1$
at every point of $\spt(M_n)$, i.e., at every point of $S_n$.  However,
we do not have ${\rm C}^1$ convergence $\spt(M_i)\to M$.  Indeed, none of the $M_i$ are
${\rm C}^1$ at the points $(1,0)$ and $(-1,0)$.
\end{a-counterexample}

\section{Versions of Allard's Boundary Regularity Theorem}\label{allard-boundary-section}

\begin{theorem}\label{extended-allard-boundary-theorem}
Let $\Omega$ be a smooth Riemannian manifold
and let $M\subset \Omega$ be an $m$-dimensional smooth, connected, properly embedded 
manifold-with-boundary
such that $\partial M$ is smooth and nonempty.
Let $M_i$ be a sequence of properly embedded $m$-dimensional minimal submanifolds-with-boundary of $\Omega$
such that the $M_i$ converge as sets to a subset of $M$,
and such that the boundaries $\partial M_i$ converge smoothly to $\partial M$.
Then $M_i$ converges smoothly to $M$.

The result remains true if each $M_i$ is minimal with respect to a Riemannian metric $g_i$
provided the metrics $g_i$ converge smoothly to a limit Riemannian metric.
\end{theorem}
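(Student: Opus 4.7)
The plan is to apply Theorem~\ref{main-theorem} and Corollary~\ref{constancy-corollary} to eliminate the area blowup set, then pass to a varifold limit, identify it as $M$ with multiplicity one, and finally invoke Allard's interior and boundary regularity theorems.

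First, since $\partial M_i \to \partial M$ smoothly, $|\partial M_i|(K)$ is uniformly bounded for every compact $K\subset\Omega$, and each $M_i$ is minimal ($h_i = 0$), so Theorem~\ref{main-theorem} (in its metric-varying form when the $M_i$ are minimal with respect to $g_i \to g$) shows that the area blowup set $Z$ is an $(m,0)$ subset of $\Omega$. The Hausdorff convergence places $Z\subset M$, and since $M$ is a connected, $C^1$-embedded $m$-dimensional submanifold-with-boundary with $\partial M\ne\emptyset$, Corollary~\ref{constancy-corollary}(2) forces $Z = \emptyset$. Hence $|M_i|$ is uniformly bounded on compact subsets of $\Omega$.

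Next I would pass to a subsequence so that $M_i\to V$ as varifolds, with $V$ an integral $m$-varifold supported in $M$. Since each $M_i$ is minimal with smooth boundary $\partial M_i$ and $\partial M_i\to\partial M$ smoothly, the first variation of $V$ is
\[
\delta V(X) \;=\; -\int_{\partial M} X\cdot\nu\,d\HH^{m-1},
\]
with $\nu$ the inward unit conormal to $\partial M$ in $M$. Thus $V$ is stationary on $\Omega\setminus\partial M$. The constancy theorem for stationary integral varifolds, applied on the connected manifold $M\setminus\partial M$, then implies that $V$ has constant integer multiplicity $k\ge 1$ on $M\setminus\partial M$. Computing $\delta V(X)$ for $X$ supported near $\partial M$ via the divergence theorem on $M$ gives a boundary contribution of weight $k$; comparison with the expression above forces $k=1$. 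Thus $V$ equals the multiplicity-one varifold associated to $M$, and the $M_i$ converge to $M$ with multiplicity one everywhere.

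With multiplicity-one convergence in hand, Allard's Regularity Theorem applied at each interior point of $M$ gives $C^{1,\alpha}$ convergence on compact subsets of $M\setminus\partial M$, upgraded to smooth convergence by elliptic regularity for the minimal surface system. At each $p\in\partial M$, the smooth convergence $\partial M_i\to \partial M$ combined with the multiplicity-one interior convergence forces $\Theta(V,p)=\tfrac12$, so Allard's Boundary Regularity Theorem yields $C^{1,\alpha}$ convergence up to $\partial M$ near $p$; smooth Dirichlet data and elliptic regularity again upgrade this to smooth convergence. The chief obstacle is the multiplicity identification: unlike Theorem~\ref{extended-allard-theorem}, there is no hypothesis of multiplicity-one convergence anywhere in the interior, and the only anchor for the multiplicity is the smooth convergence of the boundaries, which must be converted into interior information through the first-variation comparison above.
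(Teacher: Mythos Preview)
Your overall architecture matches the paper's: kill the blowup set via Theorem~\ref{main-theorem} and Corollary~\ref{constancy-corollary}, pass to a varifold limit $V$ supported in $M$, pin down the multiplicity using the boundary, and then invoke Allard. The gap is in the step where you assert
\[
   \delta V(X) = -\int_{\partial M} X\cdot \nu\,d\HH^{m-1}
\]
with $\nu$ the conormal to $\partial M$ \emph{in $M$}. For each $M_i$ one has $\delta M_i(X)=\int_{\partial M_i} X\cdot \nu_i\,d\HH^{m-1}$, but $\nu_i$ is the unit conormal to $\partial M_i$ \emph{in $M_i$}; it depends on $\Tan(M_i,\cdot)$ along $\partial M_i$, not merely on $\partial M_i$. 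Smooth convergence $\partial M_i\to\partial M$ says nothing about how $M_i$ meets $\partial M_i$, so you have no control over $\nu_i$ beyond $|\nu_i|=1$. Asserting $\nu_i\to\nu$ is essentially assuming the $C^1$ boundary convergence you are trying to prove.

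The paper's fix is exactly to use only $|\nu_i|=1$: from $|\delta M_i(X)|\le \int_{\partial M_i}|X|\,d\HH^{m-1}$ one passes to the limit to get
\[
   |\delta V(X)| \le \int_{\partial M}|X|\,d\HH^{m-1}.
\]
This already makes $V$ stationary off $\partial M$, so the constancy theorem gives $V = aM$ for some constant $a\ge 0$, hence $\delta V(X) = a\int_{\partial M} X\cdot\nu\,d\HH^{m-1}$. Feeding this back into the inequality and choosing $X$ with $X|_{\partial M}=f\nu$, $f\ge 0$, yields $a\le 1$, which is enough for Allard. Your ``comparison of two boundary expressions'' is the right instinct, but one side must be an inequality, not an equality.
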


See \S\ref{varifold-boundary-reg-theorem} 
for a generalization to submanifolds $M_i$ of bounded mean curvature or (even more
generally) to varifolds with $(|H|-h)^+$ in $\LL^p$ for some $p>m$.

Note that we are not assuming any area bounds.  To deduce the smooth convergence $M_i\to M$
directly from Allard's Regularity Theorems (boundary and interior), one would need to assume that the $M_i$
converge weakly (in the sense of Radon measures) to $M$ with multiplicity $1$.  Indeed, we prove
Theorem~\ref{extended-allard-boundary-theorem} 
by deducing weak, multiplicity~$1$ convergence from the hypotheses.

\begin{proof}
The area blowup set of the $M_i$ is an $(m,0)$ set by 
 Theorem~\ref{main-theorem},
and it is contained in a connected $m$-manifold with nonempty boundary, so it is empty
by Corollary~\ref{constancy-corollary}.
That is, the areas of the $M_i$ are uniformly bounded on compact subsets of $\Omega$.
Thus by passing to a subsequence, we can assume that the $M_i$ converge
as varifolds to a varifold $V$ supported in $M$.  

Let $X$ be a compactly supported smooth
vectorfield on $\Omega$.
If we think of $M_i$ as a rectifiable varifold (by assigning it multiplicity $1$ everywhere), recall
that its first variation operator $\delta M_i$ is given by
\begin{align*}
  \delta M_i(X) 
  &= \int_{M_i}\Div_{M_i}X\,d\HH^m \\
  &= -\int_{M_i} H\cdot X\,d\HH^m + \int_{\partial M_i} X\cdot\nu_i\,d\HH^{m-1}.
\end{align*}

Thus
\begin{equation}
\begin{aligned}\label{key-boundary-inequality}
  | \delta M_i(X) |
   &\le \int_{M_i} |H\cdot X|\,d\HH^m + \int_{\partial M_i} |X\cdot\nu_i|\,d\HH^{m-1} \\
   &\le \int_{\partial M_i} |X|\,d\HH^{m-1}.
\end{aligned}
\end{equation}
Taking the limit as $i\to\infty$ gives
\begin{equation}\label{pre-riesz}
   |\delta V(X)| \le \int_{\partial M}|X|\,d\HH^{m-1}    
\end{equation}
In particular, $\delta V(X)=0$ for $X$ compactly supported in $\Omega\setminus\partial M$,
so by the constancy theorem for stationary varifolds~(\cite{allard-first-variation}*{\S4.6(3)} or
     \cite{simon-book}*{\S 41}), $V$ is the rectifiable varifold obtained by assigning some
constant multiplicity $a\ge 0$ to $M$.

         (Strictly speaking, 
         the constancy theorem only tells us that $V$ and the varifold $M$ with multiplicity $a$
        coincide in $\Omega\setminus \partial M$. However, since
         $V$ has locally bounded first variation (by~\ref{pre-riesz}), 
         $|V|$ is absolutely continuous
         with respect to $\HH^m$ (see \cite{simon-book}*{\S3.2, \S40.5}). Thus $|V|(\partial M)= 0$, 
         so in fact the two varifolds coincide throughout $\Omega$.)

Thus by the first variation formula for $M$, 
\[
  \delta V(X) =  a \int_{\partial M} X\cdot \nu\,d\HH^{m-1},
\]
where $\nu$ is the unit normal vectorfield to $\partial M$ that 
 points out of $M$.   Substituting this into~\eqref{pre-riesz} gives
\begin{equation}\label{force-inequality}
  a \int_{\partial M}X\cdot \nu \, d\HH^{m-1} \le  \int_{\partial M} |X| \,d\HH^{m-1}.
\end{equation}
Now let $X$ be a vectorfield whose restriction to $\partial M$ is $f\nu$, where $f$ is a nonnegative
function that is strictly positive on some nonempty open set.  Then~\eqref{force-inequality} becomes
\[
 a \int_{\partial M} f\,d\HH^{m-1} \le \int f\,d\HH^{m-1},
\]
which implies that $a\le 1$.

We have shown: the $M_i$ converge as varifolds to $M$ with multiplicity $a$ where $a\le 1$.
By Allard's Regularity and Boundary Regularity Theorems
 (or by the simplified version 
 in \cite{white-local-regularity}), the convergence is smooth on compact
 subsets of $\Omega$.    
 
 (Concerning the simplified versions of Allard's theorems: the proof described in~\cite{white-local-regularity} 
 is for interior points, but  the method works equally well at the boundary.)
\end{proof}

Readers interested in minimal (rather than bounded mean curvature) varieties may skip
the rest of this section.

Theorem~\ref{extended-allard-boundary-theorem} remains true if we replace the hypothesis that the $M_i$ are minimal 
by the hypothesis that
\[
    \limsup_{i\to\infty} \int_{K\cap M_i} ((|H|-h)^+)^p\,dA < \infty
\]
for every compact $K\subset\Omega$, provided we also replace smooth
convergence (in the conclusion) by convergence in ${\rm C}^1$ (with uniform local
$C^{1,1-m/p}$ bounds). 
However, 
the proof of Theorem~\ref{extended-allard-boundary-theorem} does not work
in the more general setting.  (As in the minimal case, by passing to a subsequence we
can assume that the $M_i$ converge as varifolds to limit varifold $V$ supported in $M$.
However, $V$ need not be stationary in $\Omega\setminus\partial M$, 
and thus we cannot invoke the constancy
theorem for stationary varifolds as we did in the minimal case.)  
So a different proof is required.  In fact, we 
prove a more general result that also applies to varifolds:

\begin{theorem}\label{varifold-boundary-reg-theorem}
Let $V_i$ be a sequence of $m$-dimensional varifolds in a smooth Riemannian manifold $\Omega$ such that
\begin{enumerate}[\upshape (1)]
\item\label{V-hypothesis} for each $i$
and for each smooth, compactly supported vectorfield $X$ on $\Omega$,
\begin{equation*}
  \delta V_i(X) = -\int X\cdot H_i\,d|V_i|  + \int_{\Gamma_i} X\cdot \nu_i \,d\HH^{m-1},
\end{equation*}
where $\Gamma_i$ is a smooth, proper,  $(m-1)$-dimensional submanifold of $\Omega$,
$H_i$ is a Borel vectorfield on $\Omega$, and $\nu_i$ is a Borel vectorfield on $\Gamma_i$
with $|\nu_i(x)| \le 1$ for all $x$.
\item\label{density-ge-1} $\Theta(V_i,x)\ge 1$ at each point of $\spt(V_i)\setminus \Gamma_i$.
\item The $\spt(V_i)$ converge as sets (see remark~\ref{remark:kuratowski})
to a subset $S$ of a connected, proper, ${\rm C}^1$ submanifold $M$
with $\partial M$ smooth and nonempty.
\item $\Gamma_i$ converges smoothy to $\partial M$.
\item\label{george} For every compact $K\subset \Omega$,
\[
       \limsup_{i\to\infty} \int_K (|H_i| - h)^+)^p\,d|V_i| < \infty,
\]
where $p$ and $h$ are finite constants with $p>m$.
\end{enumerate}
Then, after passing to a subsequence, the $V_i$ converge to a limit $V$.  
If $z$ is a point in $\partial M$, 
then $\Theta(V,z)=1/2$, and $z$ has a neighborhood $U$
such that:
\begin{enumerate}[\upshape(i)]
\item\label{boundary-conclusion-holder}
      for all sufficiently large $i$, the set $\spt(V_i)\cap U$ is a $C^{1,1-m/p}$ manifold-with-boundary in $U$
   (the boundary being $\Gamma_i\cap U$), 
   with a $C^{1,1-m/p}$ bound independent of $i$, and
\item\label{boundary-conclusion-C1-convergence} $\spt(V_i)\cap U$ converges in ${\rm C}^1$ to $M\cap U$.
\end{enumerate}
Furthermore, if $\beta>1$, then $U$ can be chosen so that
\begin{enumerate}[\upshape(i)]
\setcounter{enumi}{2}
\item\label{boundary-conclusion-density}  $\sup_{x\in U\setminus \Gamma_i} \Theta(V_i,x) \le \beta$ 
for all sufficiently large $i$.
\end{enumerate}
The theorem remains true if the $V_i$ satisfy the hypotheses for a sequence $g_i$ 
of Riemannian metrics on $\Omega$ converging smoothly to a limit metric $g$.
\end{theorem}

\begin{corollary}
Suppose that the $V_i$ in Theorem~\ref{varifold-boundary-reg-theorem}
 are integer-multiplicity rectifiable varifolds or,
more generally, varifolds with the gap $\alpha$  property (\S\ref{gap-alpha-definition})
for some $\alpha>1$ independent
of $i$.  
  Then $S=M$, and every point (interior or boundary)
of $M$
has a neighborhood $U\subset \Omega$ for which~\eqref{boundary-conclusion-holder}
and~\eqref{boundary-conclusion-C1-convergence} hold, and for which $\Theta(V_i,\cdot)\equiv 1$
on $\spt(V_i)\cap U\setminus \Gamma_i$ for all sufficiently large~$i$.
\end{corollary}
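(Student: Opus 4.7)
The plan is to combine the boundary regularity from Theorem~\ref{varifold-boundary-reg-theorem} with the interior multiplicity-propagation argument from the proof of Theorem~\ref{more-extended-allard-theorem}, using the gap $\alpha$ hypothesis to upgrade the density bound in \eqref{boundary-conclusion-density} into exact multiplicity one, and then spreading this to all of $M\setminus\partial M$ by connectedness.

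First, I would apply Theorem~\ref{varifold-boundary-reg-theorem} with a choice $\beta\in(0,\alpha-1)$, so that $1+\beta<\alpha$. After passing to a subsequence, $V_i\to V$, and each $z\in\partial M$ has a neighborhood $U_z$ satisfying \eqref{boundary-conclusion-holder}--\eqref{boundary-conclusion-density}. The density bound $\Theta(V_i,\cdot)\le 1+\beta<\alpha$ on $\spt(V_i)\cap(U_z\setminus\Gamma_i)$, combined with the gap $\alpha$ property (which forbids densities in $(1,\alpha)$), forces $\Theta(V_i,\cdot)\equiv 1$ there for large $i$. Hence $|V_i|=\HH^m\llcorner\spt(V_i)$ in $U_z$, and the $C^1$ convergence to the manifold-with-boundary $M\cap U_z$ gives $|V|=\HH^m\llcorner M$ in $U_z$. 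In particular, the set $W:=\{x\in M\setminus\partial M:\Theta(V,x)=1\}$ contains an interior neighborhood of every boundary point, and is therefore nonempty.

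Next, following the argument in the proof of Theorem~\ref{more-extended-allard-theorem}, I would show that $W$ is both relatively open and relatively closed in the connected open manifold $M\setminus\partial M$. For openness at $x\in W$: choose a ball $B\subset\Omega\setminus\partial M$ around $x$; since $\Gamma_i\to\partial M$ smoothly, $B\cap\Gamma_i=\emptyset$ for large $i$, so Allard's interior Regularity Theorem applies to the $V_i$ on $B$, and upper semicontinuity of density together with the gap $\alpha$ property again force $\Theta(V_i,\cdot)\equiv 1$ on $\spt(V_i)\cap B'$ and hence $\Theta(V,\cdot)\equiv 1$ on $M\cap B'$ for a slightly smaller ball $B'$. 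For closedness: if $W$ were a proper subset of $M\setminus\partial M$, Lemma~\ref{closed-set-lemma} would yield an open geodesic ball $D\subset W$ and a point $x\in\overline{D}\setminus W$; the tangent cone to $V$ at $x$ is a stationary plane of constant multiplicity $\Theta(V,x)$, and since the tangent half-plane to $D$ at $x$ already carries multiplicity one, the whole plane must too, forcing $\Theta(V,x)=1$ and contradicting $x\notin W$.

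Therefore $W=M\setminus\partial M$, so $|V|=\HH^m\llcorner M$ and $S=M$. The structural conclusions \eqref{boundary-conclusion-holder}--\eqref{boundary-conclusion-density} then hold in a neighborhood of every point of $M$: at boundary points directly from Theorem~\ref{varifold-boundary-reg-theorem}, and at interior points from Allard's interior Regularity Theorem (applied in balls disjoint from $\partial M$, where $V_i$ has no generalized boundary for large $i$) together with the gap $\alpha$ multiplicity argument used above. The most delicate step is identifying the tangent cone as a plane of constant multiplicity in the closedness argument; this requires the $L^p$ mean curvature hypothesis (so that tangent cones are stationary) and the constancy theorem for stationary varifolds, and it is carried out exactly as in Theorem~\ref{more-extended-allard-theorem}.
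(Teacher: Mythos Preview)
Your proof is correct and follows the same route as the paper: boundary regularity plus the gap $\alpha$ property gives multiplicity one near a boundary point, and then the open--closed argument on $W=\{\Theta(V,\cdot)=1\}$ propagates this throughout $M\setminus\partial M$. The only difference is cosmetic: the paper cites Theorem~\ref{more-extended-allard-theorem} as a black box (applied in $\Omega\setminus\partial M$) for the interior propagation step, whereas you rerun that open--closed argument inline. Your version is slightly more careful, since the $V_i$ do have boundary in $\Omega\setminus\partial M$ (namely $\Gamma_i$), so Theorem~\ref{more-extended-allard-theorem} does not literally apply there; but on any relatively compact $W\subset\subset\Omega\setminus\partial M$ one has $\Gamma_i\cap W=\emptyset$ for large $i$, which is what both arguments actually use.

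One quibble: you take $\beta\in(0,\alpha-1)$ so that $1+\beta<\alpha$, but Theorem~\ref{varifold-boundary-reg-theorem} as stated requires $\beta>1$, and these are incompatible when $\alpha\le 2$ (in particular for integer multiplicity, where $\alpha=2$). The paper instead chooses $1<\beta<\alpha$ and concludes $\Theta(V_i,\cdot)<\alpha$ directly from~\eqref{boundary-conclusion-density}; this works because the proof of~\eqref{boundary-conclusion-density} actually derives a contradiction from $\Theta(V_i,x_i)\ge\beta$, so the stated bound ``$\le 1+\beta$'' should evidently read ``$\le\beta$''. With that reading, take $1<\beta<\alpha$ as the paper does and your argument goes through unchanged for every $\alpha>1$.
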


The corollary is false without the gap $\alpha$ assumption:
if we let $V_i$ be the portion of $M_i$ from \S\ref{counterexample-section} in the region $\{(x,y): |x|\le 5\}$
and if we let $M=[-5,5]\times\{0\}$,
then all the hypotheses
of Theorem~\ref{varifold-boundary-reg-theorem}
hold, but there are interior points $(x,y)$ of $M$ (namely the points $(\pm 1, 0)$) such that $(x,y)$ is a singular
point of every $V_i$.

\begin{proof}[Proof of corollary]
Let $z$ be a point in $\partial M$, let $\beta$ be a number such that $1<\beta<\alpha$.
Let $U$ be a neighborhood of $z$
satisfying the conclusions of the theorem.
Then by hypothesis~\eqref{density-ge-1} and by conclusion~\eqref{boundary-conclusion-density}, 
\[
     1 \le \Theta(V_i, x) < \alpha
\]
for all $x\in U\cap \spt(V_i)\setminus\Gamma_i$ and $i\ge i_0$, so by the gap $\alpha$ property, 
$\Theta(V_i,x)\equiv 1$ for such $x$ and $i_0$. 

Now by Theorem~\ref{more-extended-allard-theorem}, 
the multiplicity $1$ convergence in $U$ implies such convergence in all of 
$\Omega\setminus \partial M$.  But that implies that $S$ (the limit of the $\spt(V_i)$) is all
of $M$.  In particular, $S$ includes all of $\partial M$, so
 (by Theorem~\ref{varifold-boundary-reg-theorem}) we also get multiplicity $1$ convergence everywhere.
\end{proof}

\begin{proof}[Proof of Theorem~\ref{varifold-boundary-reg-theorem}] 
Let $h'=h+1$. Then, as in the proof of Theorem~\ref{more-extended-allard-theorem}, 
\[
       \limsup_{i\to\infty} \int_K (|H_i| - h')^+)\,d|V_i| < \infty,
\]
for every compact $K$.
Thus the area blowup set of the $V_i$ is an $(m,h')$-set by 
Theorem~\ref{main-theorem} and Definition~\ref{(m,h)-definition},
and it is contained in a connected $m$-manifold with nonempty boundary, so it is empty
by Corollary~\ref{constancy-corollary}.
In other words, the areas of the $V_i$ are uniformly bounded on compact sets.  It follows
(using hypothesis~\eqref{george} and Minkowski's inequality) that
\[
  \sup_K \int |H_i|^p\,d|V_i| < \infty
\]
for compact sets $K\subset \Omega$.  
By passing to a subsequence, we can assume that the $V_i$ converge to a varifold $V$.
Now let $z$ be a point in $S\cap \partial M$.  The remaining conclusions are local, so we can replace $\Omega$
by any open set containing $z$.  By isometrically embedding $\Omega$ into some $\RR^N$
and then enlarging it to get an open subset of $\RR^N$, we can assume that $\Omega$
is an open subset of $\RR^N$ with the Euclidean metric.  We may also assume that $z$ is the origin $0$.
By replacing $\Omega$ with an open ball whose closure is in $\Omega$, we can assume that
\begin{equation}\label{Lp-norm-finite}
  a:= \sup_i \left(\int |H_i|^p\,d|V_i|\right)^{1/p} < \infty.
\end{equation}
From the hypothesis~\eqref{V-hypothesis} and Holder's inequality, we have
\[
  |\delta V_i(X)| \le a \left(\int |X|^q\,d|V_i| \right)^{1/q} + \int_{\Gamma_i} |X|\,d\HH^{m-1}
\]
for all smooth, compactly supported vectorfields $X$, where $q=p/(p-1)$.
Passing to the limit gives
\begin{equation}\label{V-inequality}
  |\delta V(X)| \le a \left( \int |X|^q \,d|V| \right)^{1/q} + \int_{\partial M} |X|\,d\HH^{m-1}.
\end{equation}

For $r>0$, let $V^r$, $M^r$, and $\Omega^r$ be obtained from $V$, $M$, and $\Omega$ by dilation by $1/r$ 
about $0$.

We claim that
\begin{equation}\label{rescaling}
   \delta V^r(X) \le r^{1-m/p}a \left(\int |X|^q \, d|V^r|\right)^{1/q} +  \left( \int_{\partial M^r} |X|\,d\HH^{m-1} \right)
\end{equation}
for every smooth vectorfield $X$ supported in $\Omega^r$.
To prove the claim, fix an $r$ and let $\tilde X(x)=X(x/r)$.
Then
\begin{align*}
|\delta V^r(X)|
&=
r^{1-m} |\delta V(\tilde X)| 
\\
&\le
r^{1-m}a \left(\int |\tilde X|^q \,d|V|\right)^{1/q} + r^{1-m}  \left(\int_{\partial M} |\tilde X| \,d\HH^{m-1}\right)
\\
&=
r^{1-m/p}a \left(\int |X|^q \, d|V^r|\right)^{1/q} +  \left( \int_{\partial M^r}  |X|  \,d\HH^{m-1} \right).
\end{align*}
This proves the claim.

Let 
\begin{equation}
  \theta:
  = \Theta^*(|V|,0) \\
  := \limsup_{r\to 0} \frac{|V|\,\BB(0,r)}{\omega_mr^m} \\
  = \limsup_{r\to 0}  \frac{|V^r|\,\BB(0,1)}{\omega_m}
  \in [0,\infty].
\end{equation}
Consider a sequence of $r$'s tending to $0$ and let $\Lambda$ be the set of those $r$'s.
Choose $\Lambda$ so that
\begin{equation}\label{V-normalized}
   \lim_{r\in \Lambda\to 0} \frac{|V^r|\,\BB(0,1)}{\omega_m} = \theta.
\end{equation}
By passing to a further subsequence, we can assume that the supports of $V^r$ converge as $r\in\Lambda\to 0$
to a subset of $M':=\Tan(M,0)$, the tangent halfplane to $M$ at $0$.
Thus by~\eqref{rescaling} and Corollary~\ref{constancy-corollary}, 
the areas of the $V^r$ are uniformly bounded on compact sets, so, by passing
to a further subsequence, we can assume that the $V^r$ converge to a limit varifold $V'$ as $r\in\Lambda\to 0$.
From~\eqref{rescaling}, we see that
\begin{equation}\label{V'-inequality}
  \delta V'(X) \le   \int_{\partial M'} |X|  \,d\HH^{m-1} 
\end{equation}
for all smooth, compactly supported $X$.  In particular, $V'$ is stationary in $\RR^N\setminus \partial M'$,
so, by the constancy theorem for stationary varifolds~(\cite{allard-first-variation}*{\S 4.6(3)} or
     \cite{simon-book}*{\S41}),
$V'$ is the halfplane $M'$ with some constant multiplicity.  
By~\eqref{V-normalized}, that multiplicity is $2\theta$.  
Thus
\[
  \delta V'(X) = 2\theta \int \Div_{M'}X\,d\HH^m = 2\theta \int_{\partial M'} X\cdot \nu\,d\HH^{m-1},
\]
where $\nu$ is the unit normal vector to $\partial M'$ that points out from $M'$.  
Thus by~\eqref{V'-inequality},
\[
2\theta \int_{\partial M'} X\cdot \nu\,d\HH^{m-1} \le \int_{\partial M'} |X| \, d\HH^{m-1},
\]
which immediately implies that $\theta \le 1/2$.  (Let $X$ be a smooth, compactly supported
vecforfield whose restriction to $M'$ is $f\nu$, where $f$ is a nonnegative function that is not
identically $0$.)
 
Now $\theta=\Theta^*(|V|,0)\le 1/2$ implies, for all sufficiently small balls $\BB(0,r)$, that 
$V_i\llcorner \BB(0,r)$ satisfies the hypotheses of the Allard
boundary regularity Theorem~\cite{allard-boundary}*{p.~429} for all sufficiently large $i$, which
implies the asserted behavior~\eqref{boundary-conclusion-holder} and~\eqref{boundary-conclusion-C1-convergence}
 in a smaller ball.  Also, hypothesis~\eqref{density-ge-1} and conclusion~\eqref{boundary-conclusion-C1-convergence}
 of the theorem imply that
 $\Theta(V ,0)\ge 1/2$.  Therefore $\Theta(V,0)=1/2$.

It remains only to prove~\eqref{boundary-conclusion-density}. 
Let $U$ satisfy~\eqref{boundary-conclusion-holder} and~\eqref{boundary-conclusion-C1-convergence}. 
We may assume that~\eqref{boundary-conclusion-holder} and~\eqref{boundary-conclusion-C1-convergence} 
hold for all $i$ by dropping the first $i_0$ terms in the sequence.
Now suppose that~\eqref{boundary-conclusion-density} does not hold for any $U$.
Then, after passing to a subsequence, we can assume that there
are points $x_i\in  U\setminus \Gamma_i$ such that $x_i\to 0$ and such that
\[
   \Theta(V_i, x_i) \ge \beta.
\]
Let $y_i$ be the point in $\Gamma_i$ nearest to $x_i$.  Translate $V_i$, $M$, and $x_i$ by $-y_i$
and dilate by $1/|x_i-y_i|$ to get $V_i^\dag$, $M_i^\dag$ and $x_i^\dag$.  
Note that the $M_i^\dag$ converge to the halfplane $M'=\Tan(M',0)$.  (This follows
from the ${\rm C}^1$ convergence in~\eqref{boundary-conclusion-C1-convergence}.)   Now by exactly the same reasoning
used for the $V^r$, we can assume, after passing to a subsequence, that the $V_i^\dag$ converge
to a limit $V^\dag$ consisting of the halfplane $M'=\Tan(M,0)$ with some constant multiplicity $c\le 1$.
Note that the points $x_i^\dag$ converge to the point $x^\dag$ in $M'$ such that $x^\dag$ is 
a unit vector in $M'$ perpendicular to $\partial M'$.  
Now
\[
   1 \ge c = \Theta(V^\dag ,x^\dag) \ge \limsup_i\Theta(V_i^\dag, x_i^\dag) \ge \beta
\]
by the upper semicontinuity of density for varifolds whose mean curvatures 
satisfy uniform local $\LL^p$ bounds~\cite{simon-book}*{\S17.8}. However, $\beta>1$ by hypothesis.  
The contradiction proves~\eqref{boundary-conclusion-density}.
\end{proof}

\begin{remark}
In Theorem~\ref{varifold-boundary-reg-theorem}, the hypothesis that $|\nu(\cdot)|\le 1$
can be relaxed $|\nu(\cdot)| \le \gamma$, where $\gamma>1$ is a constant (depending on $m$ and on $\dim(\Omega)$)
from the Allard Boundary Regularity Theorem.  If the $V_i$ have the gap $\alpha$ property, then we can let $\gamma$
be any number with $1<\gamma<\alpha$.   
The proof is almost exactly the same as the proof of Theorem~\ref{varifold-boundary-reg-theorem}. 
\end{remark}

\section{The Barrier Principle}\label{barrier-section}

The following theorem shows that an $(m,h)$ subset
obeys the same barrier form of the maximum principle that is satisfied by
smooth $m$-manifolds
with mean curvature bounded by $h$.

\begin{theorem}[Barrier Principle]\label{barrier-theorem}
Let $\Omega$ be a ${\rm C}^1$ Riemannian manifold without boundary,
and let $Z$ be
an $(m,h)$ subset of $\Omega$.
Let $N$ be a closed region in $\Omega$ with smooth boundary such that $Z\subset N$,
and let $p\in Z\cap \partial N$.  Then
\[
  \kappa_1 + \dots + \kappa_m \le h
\]
where $\kappa_1\le \kappa_2 \le \dots \le \kappa_{n-1}$ are the principal curvatures
of $\partial N$ at $p$ with respect to the unit normal that points into $N$.
\end{theorem}

\begin{proof}[Proof of the Barrier Principle (Theorem~\ref{barrier-theorem})]
Since the result is local, we may assume that $\Omega$ is an open subset of $\RR^n$.
\begin{comment}By Lemma~\ref{gradient-nonzero-lemma},\end{comment} 
It suffices to construct a smooth function $f:\Omega\to \RR$
such that 
such that 
\begin{align}
\label{maxN}
\max_N f &= f(p), \\
\label{p-noncritical}
\D f(p) &\ne 0, 
\end{align}
and such that
\[
   \frac{ \trm(\D^2f)(p)}{ |\D f(p)|}  = \mu:=\sum_{i=1}^m\kappa_i
\]

Case 1: $g$ is the Euclidean metric.  Let $u:\Omega\to \RR$
be the signed distance to $\partial N$:
\begin{equation*}
u(x)
=
\begin{cases}
\dist(x, \partial N) &\text{if $x\notin N$},
\\
-\dist(x, \partial N) &\text{if $x\in N$}.
\end{cases}
\end{equation*}
Let $\ee_1, \dots, \ee_{n-1}$ be unit vectors in $\Tan_p\partial N$ in the principal
directions of $\partial N$.  These vectors together with $\nabla u(p)$ form an orthonormal
basis for $\RR^n$, and a standard and straightforward computation shows that these
are eigenvectors of $\D\nabla u(p)$ with eigenvalues $\kappa_1, \dots, \kappa_{n-1}$, and $0$.

Let $f(x)=e^{\alpha u(x)}$, where $\alpha$ is a positive number to specified later.  Then
\[
  \D f = \alpha e^{\alpha u} \D u.
\]
If we work in normal coordinates at $p$ and (by a slight abuse of notation) use $\D^2 f$
to denote the matrix of second partial derivatives of $f$ with respect to those coordinates,
then we have (at the point $p$)
\[
  \D^2 f = \alpha^2 e^{\alpha u}\D u^T \D u  + \alpha e^{\alpha u} \D^2  u.
\]
From this we see that the eigenvectors of $\D^2u(p)$ are also eigenvectors of $\D^2f(p)$,
and that the eigenvalues of $\D^2f(p)$ are 
\begin{equation}\label{eigs}
  \lambda_i = \alpha \kappa_i  \quad (i=1,\dots, n-1)
\end{equation}
together with $\lambda_n := \alpha^2$.  
Choosing $\alpha$ so that
\[
  \alpha > \max_i  | \kappa_i |
\]
guarantees that $\lambda_n$ is the largest eigenvalue and thus by~\eqref{eigs} that
\[
  \trm (\D^2f(p)) = \sum_{i=1}^m \alpha\kappa_i = \alpha\mu,
\]
so
\[
   \frac{\trm(\D^2f(p))}{|\D f(p)|} = \frac{\alpha \mu}{\alpha} = \mu.
\]
This completes the proof in case 1.

Case 2: $g$ is a general ${\rm C}^1$ metric.
As before, we can assume that $\Omega\subset \RR^n$.
By a diffeomorphic change of coordinates, we may assume that
\begin{equation}\label{normal1}
   g_{ij}(p) = \delta_{ij}
\end{equation}
and that 
\begin{equation}\label{normal2}
    \D g_{ij}(p) = 0.
\end{equation}

Now by~\eqref{normal1} and~\eqref{normal2}, at the point $p$,
the principal curvatures of $\partial M$ with respect to the Euclidean
metric $\delta$ are equal to the principal curvatures with respect to the metric $g$.
Thus, by case 1, there is a smooth function $f:\Omega\to \RR$ such that
$\D f(p)\ne 0$, 
\[
  \max_N f = f(p),
\]
and such that
\begin{equation}\label{equals-mu}
  \left[ \frac{\trm(\D^2f(p))}{|\D f(p)|} \right]_\delta = \mu.
\end{equation}
But by~\eqref{normal1} and~\eqref{normal2}, the left side of~\eqref{equals-mu}
 does not change if we replace $\delta$ by $g$.
This completes the proof in case 2.
\end{proof}

\begin{corollary}\label{avoidance-corollary}
Suppose $Z$ is an $(m,0)$ subset of smooth Riemannian $(m+1)$-manifold.
If $t\in [0,T] \mapsto M(t)$ is a mean curvature flow of compact hypersurfaces 
and if $M(0)$ is disjoint from $Z$, then $M(t)$ is disjoint from $Z$ for all $t\in [0,T]$.
\end{corollary}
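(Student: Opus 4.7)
The plan is to argue by contradiction. Let $t_0 := \inf\{t \in [0,T] : M(t) \cap Z \neq \emptyset\}$; by continuity of the smooth MCF and closedness of $Z$, $t_0 > 0$ and $M(t_0)\cap Z \ne \emptyset$. Fix $p \in M(t_0) \cap Z$; near $p$, $M(t_0)$ is a smooth embedded $m$-submanifold of the $(m+1)$-dimensional ambient manifold. Since $M(s)\cap Z = \emptyset$ for all $s<t_0$ and $M(s)\to M(t_0)$ smoothly, a short continuity argument produces a neighborhood $U$ of $p$ and a unit normal $\nu$ to $M(t_0)$ at $p$ with $Z\cap U$ contained in the closed ``$\nu$-side'' of $M(t_0)\cap U$; call this closed local region $N$, so that $Z\cap U\subset N$, $p\in Z\cap \partial N$, and the inward normal to $N$ at $p$ is $\nu$.

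I would then apply the Barrier Principle (Theorem~\ref{barrier-theorem}) to the $(m,0)$-set $Z$ and the region $N$ at $p$: with $h=0$ this yields $\kappa_1+\cdots+\kappa_m \le 0$, where $\kappa_1 \le \cdots \le \kappa_m$ are the principal curvatures of $M(t_0)$ at $p$ with respect to $\nu$. In codimension one this is equivalent to saying that the mean curvature vector $\vec H(p,t_0)$ of $M(t_0)$ at $p$ satisfies $\vec H(p,t_0)\cdot \nu \le 0$.

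On the MCF side, define $\phi(t) := \dist(M(t),Z)^2$, which is positive on $[0,t_0)$ and vanishes at $t_0$. By compactness, $\phi(t)$ is realized at some minimizing pair $(x(t),y(t))\in M(t)\times Z$, and the envelope theorem gives $\phi'(t) = -2(y(t)-x(t))\cdot \vec H(x(t),t)$ for $t$ slightly less than $t_0$. Taking $t\to t_0^-$, and using that $(y(t)-x(t))/|y(t)-x(t)|$ is a unit normal to $M(t)$ at $x(t)$ converging to $\nu$, forces $\vec H(p,t_0)\cdot \nu \ge 0$. Combining, $\vec H(p,t_0)\cdot \nu = 0$ and hence $\vec H(p,t_0) = 0$.

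The main obstacle is promoting this marginal equality into an actual contradiction. The plan is to rescale and exploit the strong form of the barrier principle. For each small $s > 0$, let $d_s := \dist(M(t_0-s),Z) > 0$, pick $(\xi_s,q_s) \in M(t_0-s)\times Z$ realizing $d_s$, and consider the Hausdorff limit (after passing to a subsequence) of the dilations $(1/d_s)(Z - q_s)$ in normal coordinates at $p$. By the blow-up Corollary~\ref{blowup-corollary}, the limit $C$ is an $(m,0)$-subset of $\RR^{m+1}$ containing the origin and contained in the closed half-space on the $\nu$-side of the tangent hyperplane $T_p M(t_0)$. The strong barrier principle (Theorem~\ref{strong-barrier-theorem}), which is available in codimension one, then forces $C$ to contain the entire tangent hyperplane. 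This tangent-cone rigidity, together with the Constancy Theorem~\ref{constancy-theorem}, is what I expect yields a contradiction with $M(0)\cap Z=\emptyset$ via the smooth continuous dependence of $M(t)$ on $t$ on the compact interval $[0,T]$.
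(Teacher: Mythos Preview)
Your proposal has a genuine gap: you never actually reach a contradiction. Steps 1--4 are reasonable and yield $\vec H(p,t_0)\cdot\nu=0$, but this is not a contradiction --- it is exactly what one expects at a first contact point. Your step~5 is not a proof: you obtain a blow-up set $C$ containing the tangent hyperplane, but you do not explain how this, together with the Constancy Theorem, contradicts $M(0)\cap Z=\emptyset$. (It does not, at least not without substantial further argument amounting to a parabolic strong maximum principle.) There is also a smaller gap earlier: your ``short continuity argument'' that $Z\cap U$ lies on one side of $M(t_0)$ near $p$ is not justified --- disjointness of $M(s)$ from $Z$ for $s<t_0$ does not prevent $Z$ from having points on both sides of $M(t_0)$ near $p$.

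The argument the paper has in mind (via \cite{white-isoperimetric}*{Proposition~7.7}) avoids the touching time altogether. One works directly with $d(t)=\dist(M(t),Z)$ for $t$ \emph{before} any contact. At a minimizing pair $(x(t),y(t))$, the set $Z$ lies locally in the region $N_t=\{q:\dist(q,M(t))\ge d(t)\}$ on the $y(t)$ side, with $y(t)\in Z\cap\partial N_t$. The Barrier Principle (Theorem~\ref{barrier-theorem}) applied to $Z$ and $N_t$ at $y(t)$ bounds the mean curvature of the parallel hypersurface $\partial N_t$ at $y(t)$, and the tube formula (Proposition~\ref{eigenvalue-proposition}) transfers this to a bound $H(x(t))\cdot\nu\le C\,d(t)$ on the mean curvature of $M(t)$ at $x(t)$. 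Since $d'(t)=-\vec H(x(t),t)\cdot\nu$ (in the support-function sense), this gives $d'(t)\ge -C\,d(t)$, and Gronwall's inequality yields $d(t)\ge d(0)e^{-Ct}>0$ for all $t\in[0,T]$. The key difference from your approach is that the barrier principle is invoked at \emph{every} $t$ (with the parallel surface to $M(t)$ as barrier), which produces a differential inequality, rather than only once at the hypothetical contact time $t_0$, which produces merely an equality that is not contradictory.
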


Here the mean curvature flow can be a classical flow, a Brakke flow of varifolds,
or a level-set flow.
See~\cite{white-isoperimetric}*{Proposition~7.7} for the proof.
(There $Z$ is stated to be the support of a stationary $m$-varifold, but
in fact the proof only uses the Barrier Principle~\ref{barrier-theorem}
and hence establishes Corollary~\ref{avoidance-corollary} for any $(m,0)$ set $Z$.)

In the codimension one case, we also have a strong barrier principle:

\begin{theorem}[Strong Barrier Principle]\label{strong-barrier-theorem}
Let $Z$ be an $(m,h)$ subset of a smooth,
$(m+1)$-dimensional, Riemannian manifold $\Omega$ without boundary.

Let $N$ be a closed region in $\Omega$ with smooth, connected boundary such that $Z\subset N$
and such that
\[
    H_{\partial N}\cdot \nu \ge h
\]
at every point of $\partial N$, where $H_{\partial N}(x)$ is the mean curvature vector of $\partial N$ at $x$
and $\nu(x)$ is the unit normal at $x$ to $\partial N$ that points into $N$.

If $Z$ contains any points of $\partial N$, then it contains all of $\partial N$.
\end{theorem}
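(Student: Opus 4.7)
I would argue by contradiction, using the (weak) Barrier Principle (Theorem~\ref{barrier-theorem}) twice. The first step is simply to unpack the hypotheses at a touching point. Pick any $p \in Z \cap \partial N$; since $\dim \Omega = m+1$, $\partial N$ has \emph{exactly} $m$ principal curvatures $\kappa_1, \dots, \kappa_m$ with respect to $\nu$, so $\kappa_1 + \cdots + \kappa_m = H_{\partial N}(p) \cdot \nu(p)$. Theorem~\ref{barrier-theorem} gives $H_{\partial N}(p) \cdot \nu(p) \le h$, while the hypothesis gives $\ge h$. Hence equality $H_{\partial N} \cdot \nu = h$ holds at every point of the touching set $T := Z \cap \partial N$.

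Suppose for contradiction that $T$ is a nonempty proper subset of $\partial N$. By connectedness of $\partial N$ and Lemma~\ref{closed-set-lemma} applied to the pair $(\partial N, T)$, there is an open geodesic ball $B \subset \partial N \setminus T$ with a point $p_0 \in T$ on its boundary in $\partial N$. The plan is to build a smooth hypersurface $\Sigma'$ bounding a closed region $N' \subset \Omega$ such that (a) $Z \subset N'$, (b) $Z \cap \partial N' \ne \emptyset$, and (c) at some touching point $p' \in Z \cap \partial N'$, the mean curvature satisfies $H_{\partial N'}(p') \cdot \nu'(p') > h$ strictly (where $\nu'$ is the inward normal to $N'$). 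Theorem~\ref{barrier-theorem} applied to $(Z, N')$ then gives $\le h$ at $p'$, contradicting (c).

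To construct $\Sigma'$, I would deform $\partial N$ by a normal variation $x \mapsto \exp_x(\epsilon \phi(x)\nu(x))$ for small $\epsilon > 0$ and a smooth function $\phi : \partial N \to \mathbb{R}$ supported near $\overline{B}$. The function $\phi$ would be chosen with $\phi \ge 0$ on $\partial N$, $\phi(p_0) = 0$, and $\phi$ strictly positive (with controlled second derivatives) on $B$. The first-order change of mean curvature is $L\phi$, where $L = \Delta_{\partial N} + |A|^2 + \mathrm{Ric}(\nu, \nu)$ is the Jacobi operator of $\partial N$. At $p_0$, $\phi(p_0) = 0$ and $\nabla \phi(p_0) = 0$, so $L\phi(p_0) = \Delta_{\partial N}\phi(p_0)$; designing $\phi$ so that $D^2\phi(p_0)$ has a strictly positive eigenvalue in the direction pointing into $B$ forces $\Delta_{\partial N}\phi(p_0) > 0$. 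Combined with the equality $H_{\partial N}(p_0) \cdot \nu(p_0) = h$ from Step~1, this gives $H_{\Sigma_\epsilon}(p_0)\cdot \nu_\epsilon(p_0) > h$ for sufficiently small $\epsilon$.

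The main obstacle is verifying $Z \subset N_\epsilon$, since pushing $\partial N$ inward shrinks the region. One must exploit the geometry: off of $T$, $Z$ has positive distance to $\partial N$, and the cutoff $\phi$ can be chosen to vanish to high order along $T$, so that by taking $\epsilon$ small and $\phi$'s support concentrated near $\overline{B}$ one keeps $Z$ inside $N_\epsilon$ while retaining $p_0$ as a touching point and the strict mean-curvature bound. In essence this is a Hopf-boundary-point construction done on the hypersurface $\partial N$ relative to the touching set $T$: the positivity of $L\phi$ at a relative boundary point $p_0$ of $T$ plays the role of the strict sign in Hopf's lemma, and the weak Barrier Principle of Theorem~\ref{barrier-theorem} provides the inequality that this strict sign ultimately contradicts.
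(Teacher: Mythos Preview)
Your overall plan---perturb $\partial N$ inward to obtain a barrier with strictly larger mean curvature at a touching point of $Z$, then invoke the weak Barrier Principle (Theorem~\ref{barrier-theorem}) for a contradiction---is exactly the strategy the paper appeals to: it simply cites \cite{SolomonWhite}*{step~1, p.~687}, which carries out precisely this reduction from the strong to the weak barrier principle. So the architecture matches.

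The gap is in your construction of $\phi$. You require simultaneously that (i) $\phi(p_0)=0$, $\nabla\phi(p_0)=0$, and $\Delta_{\partial N}\phi(p_0)>0$, so that the perturbed surface has mean curvature strictly greater than $h$ at $p_0$; and (ii) $\phi$ ``vanishes to high order along $T$'', so that $Z\subset N_\epsilon$. But $p_0\in T$, so (ii) forces $D^2\phi(p_0)=0$, contradicting (i). If you retreat to only second-order vanishing at $p_0$ and support $\phi$ in $\overline{B}$, smoothness across $\partial B$ forces all derivatives of $\phi$ to vanish along $\partial B\ni p_0$, again killing $\Delta\phi(p_0)$. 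If instead you allow $\phi>0$ in a full punctured neighborhood of $p_0$, then points of $T\setminus\{p_0\}$ (which may accumulate at $p_0$ from the side opposite $B$) get pushed out of $N_\epsilon$. And even ignoring $T\setminus\{p_0\}$, second-order vanishing of $\phi$ at $p_0$ does not by itself give $Z\subset N_\epsilon$: interior points of $Z$ may approach $\partial N$ over $B$ at a rate $o(\operatorname{dist}(\cdot,p_0)^2)$, which a quadratically vanishing $\epsilon\phi$ cannot accommodate for any $\epsilon>0$.

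The Solomon--White argument avoids this by using a genuine Hopf-type supersolution (roughly $\phi=e^{-\alpha r^2}-e^{-\alpha R^2}$, with $r$ the $\partial N$-distance to the center of $B$), for which $\nabla\phi(p_0)\neq 0$ and $L\phi>0$ throughout an annular region, not merely at $p_0$. The strict sign that drives the contradiction comes from the nonvanishing normal derivative, as in the classical Hopf boundary-point lemma, and the eventual touching point of $Z$ with the perturbed barrier is found by a continuity argument rather than prescribed in advance to be $p_0$. Your closing analogy with Hopf's lemma is apt, but the mechanism you describe (positivity of $\Delta\phi$ at a critical point) is not the one that actually works.
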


\begin{proof}
See~\cite{SolomonWhite} for a proof.  Specifically,
 \cite{SolomonWhite}*{step 1, page 687} shows that any set $Z$ that violates the conclusion of
 the strong barrier principle~\ref{strong-barrier-theorem} also violates the conclusion
 of the barrier principle~\ref{barrier-theorem}.
 (The proof there is written for the case $h=0$, but the same proof works for arbitrary $h$.)
 \end{proof}
 
\begin{corollary}[The Halfspace Theorem for $(2,0)$ sets]\label{halfspace-theorem-1}
Suppose $Z\subset \RR^3$ is a nonempty $(2,0)$ set that lies in a halfspace of $\RR^3$.
Then $Z$ contains a plane.  Indeed, if $L:\RR^3\to\RR$ is a nonconstant linear function
and if 
\[
    s:=\sup_ZL<\infty,
\]
 then $Z$ contains the plane $L=s$.
\end{corollary}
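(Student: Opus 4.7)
After an orthogonal change of coordinates and a translation of $\RR^3$, I may assume $L(x,y,z) = z$ and $s = 0$, so that $Z \subset \{z \le 0\}$ with $\sup_Z z = 0$. The goal is to prove $\{z = 0\} \subset Z$.

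My main tool is the Strong Barrier Principle (Theorem~\ref{strong-barrier-theorem}) applied to $N = \{z \le 0\}$. Its boundary $\partial N = \{z = 0\}$ is smooth, connected, and has zero mean curvature, so $H_{\partial N}\cdot \nu = 0 = h$, where $\nu = -\partial/\partial z$ points into $N$. Consequently, once I establish that $Z$ meets the plane $\{z = 0\}$, Theorem~\ref{strong-barrier-theorem} immediately forces $\{z = 0\} \subset Z$. Thus the entire corollary reduces to showing $Z \cap \{z = 0\} \ne \emptyset$.

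Suppose for contradiction that $Z \cap \{z = 0\} = \emptyset$. Since $\sup_Z z = 0$, I can choose $p_k \in Z$ with $z(p_k) \to 0^-$. Because $Z$ is closed, any bounded subsequence of $\{p_k\}$ would have a cluster point in $Z \cap \{z = 0\}$, and so $|p_k| \to \infty$. Thus $Z$ possesses an escape sequence approaching the plane $\{z = 0\}$ while running off to infinity.

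The main obstacle is to convert this escape behavior into a contradiction; the plan is to use a catenoid as a minimal barrier, in the spirit of the Hoffman--Meeks halfspace theorem. For parameters $a > 0$ and $v = (v_1, v_2) \in \RR^2$, consider the catenoid $K_{a,v}$ with vertical axis $\{(v_1, v_2)\} \times \RR$, waist at height $0$, and waist radius $a$. Its closed exterior
\[
  N_{a,v} = \{(x,y,z) \in \RR^3 : \sqrt{(x-v_1)^2 + (y-v_2)^2} \ge a\cosh(z/a)\}
\]
is a region with smooth, connected, minimal boundary, so Theorem~\ref{strong-barrier-theorem} applies with $h = 0$. I will find parameters $(a, v)$ such that $Z \subset N_{a,v}$ and $Z \cap K_{a,v} \ne \emptyset$; the Strong Barrier Principle will then force $K_{a,v} \subset Z$, which is impossible since $K_{a,v}$ is unbounded in the $z$-direction while $Z \subset \{z \le 0\}$. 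The delicate point, which is the real work of the argument, is engineering this first-contact configuration despite the escape of the $p_k$ to infinity: one begins with a catenoid disjoint from $Z$ (for instance by taking $|v|$ very large so that $Z$ is far from the catenoid's axis) and then continuously deforms $(a, v)$, recording the first moment at which $\partial N_{a,v}$ meets $Z$. This sliding-catenoid argument---a direct adaptation of the Hoffman--Meeks proof to the $(m,h)$-set setting---supplies the desired contradiction and completes the proof.
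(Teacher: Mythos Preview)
Your overall strategy matches the paper's proof exactly: both reduce the corollary to the Hoffman--Meeks catenoid barrier argument, observing that this argument uses nothing about a proper minimal surface beyond the Strong Barrier Principle (Theorem~\ref{strong-barrier-theorem}), which you have available for $(2,0)$ sets. Your Steps~1--2 (normalization and the reduction to $Z\cap\{z=0\}\ne\emptyset$) are correct and are precisely how the argument begins.

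However, the specific barrier family you describe has a genuine gap. You propose using the \emph{exterior} $N_{a,v}$ of a full catenoid and claim that ``taking $|v|$ very large'' gives an initial configuration with $Z\subset N_{a,v}$. This fails whenever $Z$ is horizontally unbounded---for instance, nothing prevents $Z$ from containing an entire horizontal plane $\{z=-1\}$ (a union of $(2,0)$ sets is again a $(2,0)$ set), and such a plane meets the interior of every vertical catenoid, no matter where its axis is placed. So you cannot start the sliding argument this way. The actual Hoffman--Meeks mechanism uses \emph{half}-catenoids whose boundary circle lies in $\{z=0\}$: one begins (conceptually) with the plane $\{z=0\}$ itself as barrier, then pushes down through a one-parameter family of lower half-catenoids with waist circle held in $\{z=0\}$; because $Z\subset\{z<0\}$ is closed, the waist circle and disk stay a positive distance from $Z$, so first contact must occur at an interior catenoidal point, and the Strong Barrier Principle then forces $Z$ to contain an entire (half-)catenoid, contradicting $Z\subset\{z\le 0\}$. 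In short, your invocation of Hoffman--Meeks is the right move, but the full-catenoid-exterior family you wrote down cannot be initialized; you should replace it with the half-catenoid family with boundary fixed in $\{z=0\}$.
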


\begin{proof} 
Hoffman and Meeks~\cite{hoffman-meeks-halfspace}*{Theorem~1} proved this in case $Z$ is a properly immersed minimal submanifold of $\RR^3$,
but their proof only uses the strong barrier principle and hence also works for arbitrary $(2,0)$ sets $Z$.
\end{proof}

\section{Converse to the Barrier Principle}\label{converse-section}

\begin{lemma}\label{gradient-nonzero-lemma}
Suppose $Z$ is a closed subset of a Riemannian manifold $\Omega$.
If $Z$ is not an $(m,h)$ set, then there is smooth function $f:\Omega\to \RR$ such
that $f|Z$ has a local maximum at a point $p$ where 
\begin{equation}\label{defining-inequality}
   \trm(\D^2f(p)) > h\, |\D f(p)|
\end{equation}
and where
\begin{equation}
     \D f(p)\ne 0.
\end{equation}
\end{lemma}

\begin{proof}
Since the result is local, we may assume that $\Omega$ is diffeomorphic to a ball or,
equivalently, to $\RR^n$.  Thus we may in fact assume that $\Omega$ is $\RR^n$ with
a Riemannian metric.  By hypothesis, there is a ${\rm C}^2$ function $f:\Omega\to \RR$
and a point $p$ such that $f|Z$ has a local maximum at $p$ and such that~\eqref{defining-inequality} holds.
By corollary~\ref{special-f-corollary}, there is such an $f$ that is smooth.
By translation, we may assume that $p=0$.

We assume that $\D f(0)=0$, as otherwise there is nothing to prove.
By replacing $f$ by 
\[
   x \mapsto f(x) - \eps\,|x|^2
\]
for a sufficiently small $\eps>0$, we may assume that $f|Z$ has a strict local maximum at $0$
and that $\D f$ has an isolated zero at $0$, i.e., that 
\begin{equation}\label{punctured}
   \text{ $\D f(x)\ne 0$ if $0< |x| < r$ }
\end{equation}
for some $r>0$.

Since $\trm(\D^2f(0))>0$, the function $f$ does not have a local maximum at $0$.  Thus $0$ is not
in the interior of $Z$.  Let $p_i$ be a sequence of points in $\Omega\setminus Z$ converging to $0$.
Let
\begin{align*}
&f_i: \Omega\to \RR \\
&f_i(x) = f(x-p_i).
\end{align*}

Since $f|Z$ has a strict local maximum at $p$, it follows that (for sufficiently large $i$)
$f_i|Z$ has a local maximum at some point $q_i$ with $\lim_i q_i=0$. By the smooth convergence
$f_i\to f$ and by~\eqref{defining-inequality}, 
\[
    \trm(\D^2f_i(q_i)) - h\,|\D f_i(q_i)| > 0
\]
for all sufficiently large $i$.

Now $|q_i-p_i|>0$ since
 since $q_i\in Z$ and $p_i\notin Z$.  Also, $|q_i-p_i|\to 0$ since $p_i$ and $q_i$ tend to $0$.
Thus $|Df_i(q_i)|\ne 0$ by~\eqref{punctured}.

Thus (for all sufficiently large $i$) the function $f_i$ and the point $q_i$ have the desired properties.
\end{proof}

\begin{theorem}\label{converse-theorem}
Let $Z$ be a closed subset of a Riemannian manifold $\Omega$ and let $m<\dim(\Omega)$.
Suppose that $Z$ is not an $(m,h)$ set.  
Then there is a closed region $N\subset \Omega$ containing $Z$ and
a point $p\in Z\cap \partial N$ such that $\partial N$ is smooth and such that
\[
     H_m(\partial N, p) > h
\]
where $H_m(\partial N,p)$ is the sum of the smallest $m$ principal curvatures of $\partial N$ at $p$
with respect to the unit normal that points into $N$.
\end{theorem}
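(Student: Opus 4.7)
The plan is to invert the barrier-principle construction. First, by the hypothesis that $Z$ fails to be an $(m,h)$ set, Corollary~\ref{special-f-corollary} produces a smooth $f:\Omega\to\RR$ for which $f|Z$ achieves its global maximum value $0$ uniquely at some point near the original failure point, with $\trm(D^2f(p))>h\,|Df(p)|$ and $\{f\ge a\}$ compact for every $a$. By then applying the perturbation argument from the proof of Lemma~\ref{gradient-nonzero-lemma} (subtracting $\epsilon|x-p|^2$ and translating slightly to land at a nearby maximizer), I may assume in addition that $Df(p)\ne 0$, and by one further modification of $f$ away from $p$ of the kind used in Lemma~\ref{special-f-lemma} (which preserves the $2$-jet of $f$ at $p$) that all the previous properties still hold.

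Since $Df(p)\ne 0$, the level set $\{f=0\}$ is a smooth embedded hypersurface in some ball $B(p,\rho)$, with $Z\cap B(p,\rho)\subset\{f\le 0\}$. The aim is to produce a closed region $N\subset\Omega$ whose smooth boundary $\Sigma$ coincides with $\{f=0\}$ near $p$, contains $Z$, and passes through $p$. For this, choose a small regular value $-\epsilon<0$ of $f$ (by Sard); then $\{f=-\epsilon\}$ is smooth, and for $\rho$ small and $\epsilon$ small relative to $\sup_{Z\setminus B(p,\rho)}f<0$, one has $Z\setminus B(p,\rho)\subset\{f<-\epsilon\}$. I would construct $\Sigma$ as the zero set of an interpolated function of the form
\[
F(x):=f(x)-\epsilon\bigl(1-\alpha(x)\bigr),
\]
where $\alpha:\Omega\to[0,1]$ is a smooth cutoff with $\alpha\equiv 1$ on $B(p,\rho/2)$, $\alpha\equiv 0$ outside $B(p,\rho)$, and with vanishing $2$-jet at $p$. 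Then $F$ has the same $2$-jet as $f$ at $p$, and a further Sard-type perturbation of $\alpha$ (or of $\epsilon$) ensures that $0$ is a regular value of $F$ globally. Setting $N:=\{F\le 0\}$ then yields a closed region with smooth boundary $\Sigma=\partial N$ that contains $Z$ (since on $Z$ one has $F\le f\le 0$, with equality forcing $x=p$) and passes through $p$.

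The curvature estimate at $p$ is a direct computation. With inward unit normal $\nu=-Df(p)/|Df(p)|$, the second fundamental form of $\Sigma=\partial N$ at $p$ acts on tangent vectors $X,Y\in(Df(p))^\perp$ by $A(X,Y)=\langle D^2f(p)X,Y\rangle/|Df(p)|$. Thus the principal curvatures $\kappa_1\le\cdots\le\kappa_{n-1}$ of $\partial N$ at $p$ with respect to $\nu$ are $\lambda_i/|Df(p)|$, where $\lambda_1\le\cdots\le\lambda_{n-1}$ are the eigenvalues of $D^2f(p)$ restricted to the codimension-one subspace $(Df(p))^\perp$. By Cauchy interlacing, $\lambda_i\ge\mu_i$, where $\mu_1\le\cdots\le\mu_n$ are the eigenvalues of $D^2f(p)$, so that $\sum_{i=1}^m\lambda_i\ge\trm(D^2f(p))$. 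Hence
\[
\kappa_1+\cdots+\kappa_m \;\ge\; \frac{\trm(D^2f(p))}{|Df(p)|} \;>\; h,
\]
as required.

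The main obstacle is the middle paragraph: producing a globally smooth $\Sigma$. The level set $\{f=0\}$ is only smooth near $p$, and the interpolation toward $\{f=-\epsilon\}$ might a priori pick up critical points of $F$ in the transition annulus. Handling this cleanly requires combining Sard's theorem with a careful cutoff construction whose $2$-jet at $p$ vanishes, so as to preserve the Hessian data that drives the curvature inequality at $p$.
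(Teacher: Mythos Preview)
Your argument is correct and follows essentially the same route as the paper: produce a smooth test function $f$ with $Df(p)\ne 0$ witnessing the failure of the $(m,h)$ condition, take $N$ to be the appropriate sublevel set (with boundary made globally smooth by a modification away from $p$), and then compare $H_m(\partial N,p)$ to $\trm D^2f(p)/|Df(p)|$ via linear algebra. The paper normalizes $|Df(p)|=1$, writes the matrix of $D^2f(p)$ in a basis of principal directions, and invokes the fact that the sum of any $m$ diagonal entries of a symmetric matrix dominates the sum of the $m$ smallest eigenvalues; your Cauchy interlacing argument is an equivalent way to reach the same inequality.

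Two small remarks on execution. First, in your interpolation $F=f-\epsilon(1-\alpha)$ the level set $\{F=0\}$ outside $B(p,\rho)$ is $\{f=\epsilon\}$, not $\{f=-\epsilon\}$, so you want $\epsilon$ (not $-\epsilon$) to be the regular value; this is a harmless slip, and the parametric Sard argument you sketch does make $0$ a regular value of $F$ for generic $\epsilon$. Second, the paper handles the smooth-boundary issue more tersely, simply asserting that one may modify $f$ outside a compact neighborhood of $p$ so that $Df$ never vanishes on the level set $\{f=f(p)\}$; your more explicit interpolation is a valid way to justify that step.
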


\begin{proof}
By hypothesis, there is a point $p\in Z$ and a smooth function $f:\Omega\to \RR$
such that $f|Z$ has a local maximum at $p$ and such that 
\[
  \trm(\D^2f(p)) >  h \, |\D f(p)|.
\]
By Lemma~\ref{gradient-nonzero-lemma}, we may assume that $\D f(p)\ne 0$.  We may also assume
that 
\[
   |\D f(p)|=1.
\]
(Otherwise replace $f$ by $f/ |\D f(p)|$.)
Thus
\[
  \trm(\D^2f(p)) > h.
\]
 By modifying $f$ outside of a compact
neighborhood of $p$, we may assume that $f|Z$ attains its global maximum at $p$, 
and that $\D f$ never vanishes on the level set $f=f(p)$.  Hence the set
$N:=\{x: f(x)\le f(p)\}$ is a closed region with smooth boundary, $Z\subset N$,
and $p\in Z\cap \partial N$.

Let 
\[
 \kappa_1 \le \kappa_2 \le \dots \le \kappa_{n-1}
\]
be the principal curvatures of $\partial N$ at $p$ with respect to the unit normal that points into $N$.
We may suppose that we have chosen normal coordinates at $p$ such that
the standard basis vectors 
 $\ee_1, \ee_2, \dots, \ee_{n-1}$ 
are the corresponding principal directions of $\partial N$ at $p$.
Let 
\[
   \nu = \frac{\nabla f}{|\nabla f|}
\]
and $s=|\nabla f|$, so that $\nabla f = s\nu$.
Now
\begin{align*}
   \D_{\ee_i}(\nabla f(p)) 
   &= \D_{\ee_i} s\nu \\
   &= s \D_{\ee_i}\nu + \nu \D_{\ee_i}s \\
   &= \kappa_i\ee_i + \nu \D_{\ee_i}s,
\end{align*}
so
\[
  \ee_i\cdot \D_{\ee_i}\nabla f(p) = \kappa_i.
\]
In other words, $\kappa_i$ is the $ii$ entry of the matrix for $\D\nabla f(p)$ with respect to the orthonormal
basis $\ee_1, \dots, \ee_n$.
Thus
\[
 h <  \trm(\D^2f(p)) \le \sum_{i=1}^m\kappa_i.
\]
(For the last step we are using the following fact from linear algebra: 
if $Q$ is a symmetric $n\times n$ matrix, then the sum of any $m$ of the diagonal
entries of $Q$ is greater than or equal to the sum of the smallest $m$ eigenvalues of $Q$.)
\end{proof}

\section{Minimal Hypersurfaces}\label{minimal-hypersurfaces-section}

Here we prove some results in the special case of $(m,0)$ sets in $(m+1)$-dimensional manifolds.
In the next section, we extend the results to $(m,h)$ sets with $h>0$.

We suppose throughout this section that $N$ is a smooth, $(m+1)$-dimensional Riemannian manifold with smooth, connected boundary.  We also suppose that one of the following hypotheses holds:
\begin{enumerate}
\item\label{ricci} $N$ is complete with Ricci curvature bounded below, or
\item\label{exhaust} $N$ has an exhaustion by nested, compact, mean convex regions, or
\item\label{piecewise} $N$ is a subset of a larger $(m+1)$-manifold and $\overline{N}$ is compact
and mean convex.
\end{enumerate}

(Each of these hypotheses guarantees that a compact hypersurface in $N$ moving by mean curvature flow
cannot escape to infinity in finite time.  For hypotheses~(2) and~(3), this follows immediately from the maximum principle.  For hypothesis~\eqref{ricci}, see~\cite{ilmanen-generalized-flow-of-sets}*{6.2 or 6.4}.  
In hypotheses~\eqref{exhaust} and~\eqref{piecewise}, 
the mean convex regions referred to need not have smooth boundary.)

\begin{theorem}\label{minimal-hypersurface-theorem}
Let $m<7$ and let $N$ be a smooth, mean convex, $(m+1)$-dimensional Riemannian manifold with smooth, nonempty, connected boundary satisfying one of the hypotheses (1)--(3) above.

Suppose that $N$ contains a nonempty $(m,0)$ subset $Z$ and that $Z$ does not contain all of $\partial N$.

Then $N$ contains a nonempty, smooth, embedded, 
stable minimal
hypersurface $S$ that weakly separates $Z$ from $\partial N$ in the following sense:
 if $C\subset N$ is a connected, compact set that contains points of $Z$ and of $\partial N$, 
 then $C$ intersects $S$.

The theorem remains true for $m\ge 7$, except that the surface $S$ is allowed to have a singular
set of Hausdorff dimension $\le m-7$.
\end{theorem}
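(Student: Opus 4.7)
The plan is to realize $S$ as the limit of a weak mean curvature flow starting from $\partial N$, with $Z$ acting as an interior barrier.

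First I apply the Strong Barrier Principle (Theorem~\ref{strong-barrier-theorem}) with $h=0$. Since $\partial N$ is smooth, connected, and mean convex with normal pointing into $N$, and since $Z$ is an $(m,0)$-subset of $N$ that does not contain all of $\partial N$, Theorem~\ref{strong-barrier-theorem} forces $Z \cap \partial N = \emptyset$. Hence $Z$ lies strictly in the interior of $N$.

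Next I run the weak (level-set) mean curvature flow of $\partial N$ inward into $N$. Under any of the hypotheses (1)--(3), this flow is defined for all $t \geq 0$ and no compact piece escapes to infinity in finite time. Let $U_t \subset N$ denote the open region swept out by time $t$ (so $\partial N \subset \overline{U_t}$, and $U_t$ is monotone increasing in $t$ because $\partial N$ is mean convex), and set $K_t = N \setminus U_t$. Because $\partial N \cap Z = \emptyset$, Corollary~\ref{avoidance-corollary} gives $Z \subset K_t$ for every $t$. Let $K_\infty = \bigcap_{t \geq 0} K_t$ and $U_\infty = \bigcup_{t\ge 0}U_t$; then $Z \subset K_\infty$ and $K_\infty$ is closed. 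Define $S$ to be the topological boundary of $K_\infty$ inside the interior of $N$.

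The regularity theory for mean curvature flow starting from a mean convex hypersurface then asserts that $S$ is a stable embedded minimal hypersurface, smooth off a closed singular set of Hausdorff dimension at most $m-7$ (and entirely smooth when $m < 7$). The weak separation property is purely topological: if a connected compact $C \subset N$ meets $Z \subset K_\infty$ and also meets $\partial N \subset \overline{U_\infty}$, then $C$ contains points of $K_\infty$ and of its open complement $U_\infty$, so by connectedness $C \cap \partial K_\infty \neq \emptyset$, i.e., $C \cap S \neq \emptyset$.

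The principal obstacle is Step~2: rigorously setting up the long-time weak flow in the three non-compact settings and extracting the claimed regularity of the limit surface $S$. This requires importing results from the theory of mean-convex mean curvature flow rather than using only the tools developed earlier in the paper. An alternative route that avoids the flow machinery is to minimize the $m$-dimensional area among Caccioppoli sets that enclose $Z$ and lie in the interior of $N$ (using a compact exhaustion if $Z$ is not compact), in which case stability and the singular-set codimension bound are supplied directly by classical geometric measure theory; the avoidance of $\partial N$ is again guaranteed by Step~1, so the minimizer's reduced boundary is the desired $S$.
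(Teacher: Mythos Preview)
Your approach---run mean curvature flow inward from $\partial N$, use the avoidance principle to trap $Z$ inside the evolving region, take the long-time limit $K_\infty$, and invoke the regularity theory for mean-convex flow to identify $S=\partial K_\infty$ as a stable minimal hypersurface---is exactly the paper's strategy, and the paper cites the same outside source for the regularity and stability of $\partial K_\infty$.

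There is, however, a genuine gap: you do not handle the case where $\partial N$ is itself minimal. In that case the flow is static, so $K_t=N$ for all $t$, $K_\infty=N$, and your $S$ (the boundary of $K_\infty$ inside the interior of $N$) is empty. The paper resolves this by an explicit case split: if $\partial N$ is a \emph{stable} minimal hypersurface, one simply takes $S=\partial N$ and stops; if $\partial N$ is minimal but \emph{unstable}, one first pushes $\partial N$ slightly into the interior of $N$ using the lowest eigenfunction of the Jacobi operator, which produces a strictly mean-convex hypersurface, and then runs the flow from there.

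The paper also separates the compact and noncompact $\partial N$ cases. In the noncompact case (which you correctly flag as the principal obstacle), the paper does not attempt to flow all of $\partial N$ at once via level-set flow; instead it chooses a proper Morse function $f:\partial N\to\RR$ and runs a flow whose moving boundary at time $t$ is the compact set $\{f=t\}\subset\partial N$, so that $\partial N$ is swept out as $t\to\infty$. Each time-slice is then a compact hypersurface-with-boundary, so Corollary~\ref{avoidance-corollary} (which is stated only for compact flowing hypersurfaces) applies directly, and the long-time limit is analyzed exactly as in the compact case.
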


(We remark that $S$ has a one-sided minimizing property considerably stronger than stability.
See~\cite{white-size}*{\S11} for details.
In particular, if any connected component of $S$ is one-sided (i.e., has a nonorientable normal bundle),
then its two-sided double cover is also stable.)

\begin{proof}
By the Strong Barrier Principle~\ref{strong-barrier-theorem}, the set $Z$ must lie in the interior of $N$.
If $\partial N$ is a stable minimal hypersurface, then we let $S=\partial N$.
Thus we may assume that $\partial N$ is not a minimal hypersurface or that
it is an unstable minimal hypersurface.  We divide the proof into four cases
according to whether $\partial N$ is or is not minimal and whether it is or
is not compact.

{\bf Case 1}: $\partial N$ is compact and $\partial N$ is not a minimal surface.   
Let 
\[
   t\in [0,\infty) \mapsto K(t)
\]
be the flow such that $K(0)=N$ and such that $\partial K(t)$ flows by mean curvature flow.
Each of the hypotheses~\eqref{ricci}, \eqref{exhaust}, and \eqref{piecewise}
 imply that $\partial K(t)$ remains in $N$ (as a compact set) for all time.  

Also, since $Z$ is an $(m,0)$ set, $\partial K(t)$ can never bump into $Z$ (Corollary~\ref{avoidance-corollary}.)
That is, $Z$ is contained in the interior of $K(t)$ for all $t$.
Thus $Z\subset K_\infty \subset \text{interior}(N)$ where $K_\infty=\cap_tK(t)$.
Furthermore, by~\cite{white-size}*{\S11}, 
$S:=\partial K_\infty$ is a minimal surface with the indicated regularity properties.
This completes the proof in case~1.

{\bf Case 2}: $\partial N$ is a compact, unstable minimal hypersurface.  The instability means that
we can push $\partial N$ slightly into $N$ 
to get a surface whose mean curvature is everywhere nonzero and points away from $\partial N$.
(For example, we can push $\Sigma$ into $N$ by the lowest eigenfunction
of the Jacobi operator; see \cite{hoffman-white-genus-one}*{Proposition~A3} for a proof.)  
Replacing $N$ by the portion of $N$ on one side of that surface reduces case 2 to case 1.

{\bf Case 3}: $\partial N$ is noncompact and nonminimal.  In this case, let $p$ be a point in $\partial N$
where the mean curvature of $\partial N$ is nonzero.  Let $f:\partial N\to \RR$
be a proper Morse function such that $f(p)=\min f < 0$ and such that $0$
is a regular value of $f$.   
Let 
\[
  t\in[0,\infty)\mapsto K(t) \subset N
\]
be the flow such that 
\begin{gather*}
K(0)=N,\\
K(t)\cap (\partial N)= \{q\in \partial N: f(q)\ge t\},
\end{gather*}
and such that the surfaces
\[
   M(t):= \partial K(t)
\]
move by mean curvature flow.

(Note that $M(t)$ is a (possibly singular) $m$-dimensional surface with boundary,
the boundary of $M(t)$ is $\{q\in \partial N: f(q)=t\}$.)

The rest of the proof is essentially identical to the proof in case 1.

{\bf Case 4}: $\partial N$ is a noncompact, unstable minimal hypersurface.
Let $f:\partial N\to \RR$ be a smooth, proper Morse function that is bounded below.
Since $\partial N$ is unstable, it follows that
 for all sufficiently large $t$, the surface $(\partial N)\cap \{f<t\}$ will be unstable.
In particular, there is a regular value $\tau$ of $t$ for which the surface $\Sigma:=(\partial N)\cap\{f<\tau\}$
is unstable.  
By adding a constant to $f$, we may suppose that $\tau=0$.
The instability implies that 
we can push the interior of $\Sigma$ slightly into the interior of $N$ to get a surface $\Sigma'$
with $\partial \Sigma'=\partial \Sigma$ such that the mean curvature of $\Sigma'$ is everywhere nonzero and points away from $\partial N$.  For example, we can push $\Sigma$ into $N$ by the lowest eigenfunction
of the Jacobi operator as in case (2).
We make the perturbation small enough that the closed region bounded by $\Sigma\cup\Sigma'$ does
not contain any points of $Z$.

Now let 
\[
  t\in [0,\infty)\mapsto M(t)
\]
be the mean curvature flow (constructed by elliptic regularization) such that $M(0)=\Sigma'$
and such that $\partial M(t)= \{x\in \partial N: f(x)=t\}$ for all $t\ge 0$.

The rest of the proof is identical to the proof in case 3.
\end{proof}

\begin{corollary}[Strong Halfspace Theorem for $(2,0)$ sets]\label{halfspace-theorem-2}
Let $\Sigma$ be a connected, properly embedded, separating minimal surface in a 
complete $3$-manifold $\Omega$
of nonnegative
Ricci curvature.
Suppose $Z$ is a nonempty $(2,0)$ set that lies in the closure $N$ of one of the connected
components of $\Omega\setminus \Sigma$, and suppose that $\Sigma\setminus Z$ is nonempty.  
Then $N$ contains
a properly embedded, totally geodesic surface $M$ with Ricci flat normal bundle.

In particular, if $\Omega$ is the flat $\RR^3$, then $\Sigma$ is a plane and $Z$ contains
a plane parallel to $\Sigma$.
\end{corollary}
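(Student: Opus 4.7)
The plan is to deduce this corollary from Theorem~\ref{minimal-hypersurface-theorem}, combined with the classical rigidity of stable two-sided minimal surfaces in nonnegatively-Ricci-curved $3$-manifolds, and (for the Euclidean case) with the halfspace principle Corollary~\ref{halfspace-theorem-1}.

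First I would verify the hypotheses of Theorem~\ref{minimal-hypersurface-theorem} for $N$ with $m=2$. Completeness of $\Omega$ together with the lower Ricci bound $\operatorname{Ric}\ge 0$ gives that $N$ is complete with Ricci bounded below, so condition~(1) of Section~\ref{minimal-hypersurfaces-section} holds. The boundary $\partial N=\Sigma$ is minimal and connected. Since $Z\subset N$ is a nonempty $(2,0)$ set that does not contain all of $\partial N$, Theorem~\ref{minimal-hypersurface-theorem} (with $m=2<7$) produces a nonempty, smooth, properly embedded, stable minimal surface $M\subset N$ that weakly separates $Z$ from $\partial N$. (In the degenerate case where $\Sigma$ is already stable minimal, one may take $M=\Sigma$.)

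Next I would upgrade $M$ to being totally geodesic and Ricci-flat in the normal direction. Working on each connected component, and passing to the two-sided double cover if the component is one-sided (as permitted by the remark following Theorem~\ref{minimal-hypersurface-theorem}), the stability inequality reads
\[
\int_M \bigl(|A|^2 + \operatorname{Ric}(\nu,\nu)\bigr)\,\phi^2\,dA \;\le\; \int_M |\nabla \phi|^2 \,dA
\]
for every compactly supported $C^1$ test function $\phi$. The one-sided minimizing property of $M$ described in~\cite{white-size}*{\S11} controls the area growth of $M$ well enough to carry out a logarithmic cutoff exhaustion. Since both terms on the left are pointwise nonnegative (by $\operatorname{Ric}\ge 0$), letting $\phi\to 1$ forces $|A|\equiv 0$ and $\operatorname{Ric}(\nu,\nu)\equiv 0$ along $M$, which proves the main assertion of the corollary.

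Finally, for the case $\Omega=\RR^3$: the surface $M$ produced above is totally geodesic in Euclidean space and therefore a disjoint union of planes. Restricting to a connected component, $M$ is a single plane $P$. By the weak separation property, $\Sigma$ lies in one closed halfspace bounded by $P$ and $Z$ lies in the other. Since the properly embedded minimal surface $\Sigma$ is a $(2,0)$ set by Corollary~\ref{main-theorem-examples-corollary} and is contained in a halfspace, Corollary~\ref{halfspace-theorem-1} forces $\Sigma$ to contain a plane parallel to $P$; by connectedness and embeddedness, $\Sigma$ equals that plane. Applying Corollary~\ref{halfspace-theorem-1} to $Z$ on the opposite side yields a plane in $Z$ parallel to $\Sigma$, as claimed. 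The main obstacle I anticipate is the rigidity step converting one-sided stability into totally-geodesic-plus-Ricci-flat-normal: one must carefully justify the logarithmic cutoff exhaustion for noncompact components of $M$ and handle one-sided components via their two-sided covers.
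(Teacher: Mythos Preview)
Your approach is essentially the paper's: invoke Theorem~\ref{minimal-hypersurface-theorem} to produce a stable minimal surface $M$, apply rigidity to conclude $M$ is totally geodesic with Ricci-flat normal bundle, and use Corollary~\ref{halfspace-theorem-1} for the Euclidean statement. The paper, however, simply cites Fischer--Colbrie--Schoen \cite{fischer-colbrie-schoen} for the rigidity step rather than reproving it.

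Your sketch of the rigidity argument has a soft spot. You propose to justify the logarithmic cutoff by appealing to area growth coming from the one-sided minimizing property in~\cite{white-size}*{\S11}; it is not clear that this by itself yields the needed parabolicity-type control in an arbitrary complete $3$-manifold of nonnegative Ricci curvature. The Fischer--Colbrie--Schoen argument instead uses the Gauss equation: under nonnegative scalar curvature, stability forces the intrinsic Gaussian curvature of $M$ to be nonnegative, so $M$ is parabolic (or compact), and then the exhaustion by $\phi\to 1$ in the stability inequality is legitimate and forces $|A|\equiv 0$ and $\operatorname{Ric}(\nu,\nu)\equiv 0$. You correctly flag this as the delicate point; the cleanest fix is to cite \cite{fischer-colbrie-schoen} as the paper does. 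Your treatment of the Euclidean case is more explicit than the paper's terse reference to Corollary~\ref{halfspace-theorem-1}, and is correct.
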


Hoffman and Meeks~\cite{hoffman-meeks-halfspace}*{Theorem~2} 
proved this in case $Z$ is a properly immersed minimal surface.

\begin{proof}
The corollary follows from the Theorem~\ref{minimal-hypersurface-theorem}
 because by~\cite{fischer-colbrie-schoen}*{page 210, paragraph 1},
every complete, stable, two-sided
minimal surface $M$ in $\Omega$ is totally geodesic and has Ricci flat normal bundle.

The last assertion (``$Z$ contains a plane parallel to $\Sigma$'') is Corollary~\ref{halfspace-theorem-1}.
\end{proof}

\section{Bounded Mean Curvature Hypersurfaces}\label{bounded-mean-curvature-section}

Here we extend Theorem~\ref{minimal-hypersurface-theorem} from  $(m,0)$ sets to $(m,h)$ sets.

\begin{definition}
Let $N$ be a smooth Riemannian manifold with smooth boundary and let $h\ge 0$.  We say that $N$ is 
{\em $h$-mean convex} provided 
\begin{equation}\label{h-convexity-inequality}
    H_{\partial N}\cdot \nu\ge h
\end{equation}
at all points of $\partial N$, where $H_{\partial N}$ is the 
mean curvature vector and $\nu$ is unit normal to $\partial N$ that points into $N$.
\end{definition}

It is also convenient to allow $N$ with piecewise smooth boundary.
In particular, suppose $N=\cap_iN_i$ is the intersection of finitely many smooth Riemannian
manifolds with smooth boundary and that the $\partial N_i$ are transverse.
(The transversality means that if $x$ belongs to several of the $\partial N_i$, then the unit
normals to those $\partial N_i$ at $x$ are linearly independent.) 
In that case, we say that $N$ is {\em $h$-mean convex} provided~\eqref{h-convexity-inequality} holds at all the regular
boundary points of $N$.

In this section, we suppose that $h>0$ and that $N$ is a smooth, $(m+1)$-dimensional Riemannian manifold 
that satisfies one of the following hypotheses:
\begin{enumerate}[\upshape (i)]
\item\label{ricci2} $N$ is complete with Ricci curvature bounded below.
\item\label{exhaust2} $N$ has an exhaustion by nested, compact, $h$-mean convex regions.
\item\label{piecewise2} $N$ is a subset of a larger $(m+1)$-manifold and $\overline{N}$ is compact
and $h$-mean convex.
\end{enumerate}

(The exhausting regions in~\eqref{exhaust2} and the region $\overline{N}$ in~\eqref{piecewise2}
  are allowed to have piecewise smooth boundary.)

\begin{theorem}\label{bounded-mean-curvature-hypersurface-theorem}
Let $h>0$,  $m<7$, and let $N$ be a smooth, $h$-mean convex, $(m+1)$-dimensional Riemannian manifold with smooth, nonempty, connected boundary.
Suppose that one of the hypotheses (i), (ii) or (iii) holds, and 
that $N$ contains a nonempty $(m,h)$ subset $Z$.

Then $Z$ is contained in a region $K$ whose boundary is smooth and has constant mean curvature $h$ with respect
to the inward unit normal.  Furthermore, if $\partial N$ is not contained in $Z$, then $\partial K$ is stable
for the functional 
$({\rm area}) - h({\rm enclosed\,volume})$.

The theorem remains true for $m\ge 7$, except that the surface $S$ is allowed to have a singular
set of Hausdorff dimension $\le m-7$.
\end{theorem}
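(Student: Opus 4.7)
The plan is to mirror the proof of Theorem~\ref{minimal-hypersurface-theorem}, replacing mean curvature flow with the $L^{2}$-gradient flow of the functional $\FF(K) = |\partial K| - h \operatorname{Vol}(K)$. Under this flow the boundary $\partial K(t)$ moves with inward normal velocity $H - h$, so its equilibria are exactly the hypersurfaces of constant mean curvature $h$ with respect to the inward normal, and the condition $H \ge h$ is preserved by the parabolic maximum principle applied to $u = H - h$, which satisfies $\partial_{t} u = \Delta u + |A|^{2} u$ on each moving surface.

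The key new ingredient is an avoidance principle for $(m,h)$ sets under the forced flow: if $Z$ is an $(m,h)$ subset contained in an $h$-mean convex initial region $K(0)$, then $Z \subset K(t)$ for all $t \ge 0$. The proof parallels Corollary~\ref{avoidance-corollary} and \cite{white-isoperimetric}*{Proposition~7.7}, with Theorem~\ref{strong-barrier-theorem} playing the role of the ordinary strong maximum principle: if $\partial K(t)$ first met $Z$ at some time $t_{0}$ at a point $p$, then since $h$-mean convexity is preserved we would have $H \ge h$ at $p$ with the inward normal pointing into the region containing $Z$, so Theorem~\ref{strong-barrier-theorem} would force the entire connected component of $\partial K(t_{0})$ through $p$ to lie in $Z$; this is consistent with $Z \subset K(t_{0})$, and the flow continues past $t_{0}$ on the remaining components.

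With avoidance in hand I would run the forced flow starting from $K(0) = N$ and take the monotone limit $K_{\infty} := \bigcap_{t \ge 0} K(t)$, in parallel with the four cases of Theorem~\ref{minimal-hypersurface-theorem}. In the compact, strictly $h$-mean convex case, $K(t)$ remains in $N$ by the maximum principle and contains $Z$ by avoidance, so $K_{\infty} \supset Z$; its boundary is smooth of constant mean curvature $h$ away from a singular set of Hausdorff dimension at most $m-7$, by the analogue of~\cite{white-size}*{\S11} adapted to the $h$-mean convex forced flow. The remaining cases (non-compact $\partial N$, or $\partial N$ itself CMC~$h$ but unstable for $\FF$) are reduced to this one by the same devices used in Theorem~\ref{minimal-hypersurface-theorem}: either perturbing $\partial N$ inward via the lowest eigenfunction of the $\FF$-Jacobi operator, or prescribing the moving boundary by a proper Morse function on $\partial N$. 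Stability of $\partial K_{\infty}$ for $\FF$ when $\partial N \not\subset Z$ then follows from the one-sided $\FF$-minimizing property of the limit, exactly as in the minimal case. In the exceptional case $\partial N \subset Z$, Theorem~\ref{barrier-theorem} applied at each point of $\partial N$ (where the sum of all $m$ principal curvatures equals $H \ge h$, but the barrier gives $\le h$) forces $H \equiv h$ on $\partial N$, so we may simply take $K = N$.

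The main obstacle is the regularity theory for the singular $h$-mean convex forced flow, i.e., the extension of~\cite{white-size}*{\S11} from $h = 0$ to $h > 0$. The essential ingredients---convexity estimates near singularities, stratification of the singular set, and one-sided $\FF$-minimality of the limit region---are expected to carry over with only lower-order modifications coming from the forcing term, but verifying this rigorously is where the bulk of the technical work would lie.
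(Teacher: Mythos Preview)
Your proposal is correct and follows essentially the same approach as the paper: replace mean curvature flow by the forced flow with normal velocity $H - h$, note that $h$-mean convexity is preserved and that the avoidance principle for $(m,h)$ sets still applies, then run the four cases of Theorem~\ref{minimal-hypersurface-theorem} and invoke the extension of~\cite{white-size}*{\S11} to $h>0$ for the regularity and one-sided minimizing property of the limit. The paper adds one observation you omit: for $h>0$ the limiting behavior is actually simpler than in the minimal case, since $\partial K(t)$ cannot converge to a double cover of $\partial K_\infty$.
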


\begin{proof}
The proof is exactly the same as the proof of Theorem~\ref{minimal-hypersurface-theorem},
except that in that proof, we let the sets $K(t)$ evolve so that $\partial K(t)$ moves
not with velocity $H$ but rather with velocity $H-h\nu$ where $H$ is the mean curvature
and $\nu(x)$ is the inward unit normal.

Suitable varifold solutions to the flow can be constructed by elliptic regularization just as in the $h=0$ case.
Furthermore, $h$-mean convexity is preserved by the flow just as in the $h=0$ case.  Indeed,
all the results in~\cite{white-size} for mean convex mean curvature flow continue to hold for arbitrary $h$, with only
very minor modifications in the proofs.   In fact, for $h>0$, the behavior of $\partial K(t)$ as $t\to\infty$
is slightly simpler: in the case $h=0$, it is possible for $\partial K(t)$ to converge smoothly to a double
cover of the limit surface $S$, whereas for $h>0$, that is clearly impossible.
\end{proof}

\section{The Distance to an $(m,h)$ Set}\label{distance-section}

Here we show that $(m,h)$ sets behave well with respect to the distance function.
The theorem and its proof are particularly simple when the ambient space is Euclidean,
so we consider that case first:

\begin{theorem}
Suppose $Z$ is an $(m,h)$ subset of $\RR^n$.  Then for $s>0$, the set $Z(s)$
of points in $\RR^n$ at distance $\le s$ from $Z$ is also an $(m,h)$ set. 
\end{theorem}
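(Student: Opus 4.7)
The plan is to reduce a local maximum of a test function on $Z(s)$ to a local maximum of a translated test function on $Z$ itself, exploiting the translation invariance of the Euclidean distance. Suppose $f\colon \RR^n \to \RR$ is $C^2$ and that $f|Z(s)$ attains a local maximum at some point $p\in Z(s)$; we must verify that $\trm(D^2f(p)) \le h\,|Df(p)|$. The argument splits into two cases depending on whether $p$ is an interior or boundary point of $Z(s)$.

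If $\dist(p,Z) < s$, then $p$ lies in the topological interior of $Z(s)$, so the constrained local maximum of $f|Z(s)$ at $p$ is an unconstrained local maximum of $f$ at $p$. Then $Df(p)=0$ and $D^2f(p)$ is negative semidefinite, whence $\trm(D^2f(p)) \le 0 = h\,|Df(p)|$, and there is nothing further to check.

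If instead $\dist(p,Z) = s$, then since $Z$ is closed there exists $q\in Z$ with $|p-q|=s$. Set $v := p-q$ and define the translated test function $\tilde f(y) := f(y+v)$. For every $y\in Z$ the point $y+v$ satisfies $|(y+v)-y| = |v| = s$, hence $y+v \in Z(s)$; moreover $y+v$ is close to $p$ whenever $y$ is close to $q$. Consequently, for $y\in Z$ near $q$,
\[
  \tilde f(y) = f(y+v) \le f(p) = \tilde f(q),
\]
so $\tilde f|Z$ attains a local maximum at $q$. Applying the $(m,h)$ property of $Z$ to the $C^2$ test function $\tilde f$ at $q$ gives $\trm(D^2 \tilde f(q)) \le h\,|D\tilde f(q)|$. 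Since translation preserves derivatives, $D\tilde f(q) = Df(p)$ and $D^2\tilde f(q) = D^2 f(p)$, which yields exactly the required inequality.

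There is essentially no obstacle in the Euclidean case because rigid translation is an isometry that carries $Z$ to $Z - v$, whose $s$-neighborhood automatically contains $Z$. The genuine difficulty will arise in the Riemannian generalization mentioned just after this theorem: one must replace rigid translation by transporting $f$ from $p$ to $q$ along a minimizing geodesic, and the resulting comparison of Hessians will pick up curvature-induced error terms that have to be absorbed into the right-hand side (producing either a metric-dependent correction or, more likely, a statement of the form ``$Z(s)$ is an $(m, h')$ set'' with $h'$ depending on $s$ and on a lower bound for the sectional curvature of the ambient manifold).
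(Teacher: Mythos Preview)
Your proof is correct and follows the same translation argument as the paper: pick $q\in Z$ nearest to $p$, translate $f$ by $p-q$, and use that $Z+(p-q)\subset Z(s)$ to transfer the local maximum to $Z$. The paper's version is slightly slicker in that it does not split into cases---since $|p-q|\le s$ always, the translation argument in your second case already covers the interior case as well, making the first case unnecessary (though not incorrect).
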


\begin{proof}
Let $f:\RR^n\to \RR$ be a smooth
function such that $f|Z(s)$ has a local maximum at $p\in Z(s)$.
Let $q$ be a point in $Z$ that minimizes $\dist(q,p)$.   Let
\[
    g(x) = f(x + p - q).
\]
Then $g|Z$ has a local maximum at $q$, so
\[
  \trm(\D^2g(q)) - h\,|\D g(q)| \le 0.
\]
Since $\D f(p)=\D g(q)$ and $\D^2f(p)=\D^2g(q)$, this implies that
\[
  \trm(\D^2f(p)) - h\,|\D f(p)| \le 0.
\]
\end{proof}

\begin{theorem}
Suppose $Z$ is an $(m,h)$ subset of a connected, Riemannian manifold $\Omega$.
For $s>0$, let $Z(s)$ be the set of points at geodesic distance $\le s$ from $Z$.
\begin{enumerate}[\upshape (i)]
\item\label{sectional-case} If the sectional curvatures of $\Omega$ are bounded below by $K$,
then $Z(s)$ is an $(m, h - mKs)$ set.
\item\label{ricci-case} If $\dim(\Omega)=m+1$ and if the Ricci curvature of $\Omega$ is bounded
below by $\rho$, then $Z(s)$ is an $(m, h - \rho s)$ set.
\end{enumerate}
\end{theorem}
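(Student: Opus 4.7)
We prove (i); part (ii) is the same argument, noting that when $\dim\Omega = m+1$ the perpendicular space $\gamma'(0)^\perp$ has dimension exactly $m$, so summing sectional curvatures over an orthonormal basis of $\gamma'(0)^\perp$ yields $\operatorname{Ric}(\gamma'(0),\gamma'(0)) \ge \rho$.

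Argue by contradiction, mimicking the Euclidean proof above. Suppose $Z(s)$ fails to be an $(m,h-mKs)$ set. By Lemma~\ref{special-f-lemma}, there is a $C^2$ function $f \colon \Omega \to \RR$ such that $f|Z(s)$ has a local maximum at some $p$ with $\trm(D^2f(p)) > (h-mKs)|Df(p)|$. If $p$ lies in the interior of $Z(s)$, then $Df(p)=0$ and $D^2f(p)\preceq 0$, immediately contradicting the displayed inequality. So $p \in \partial Z(s)$, $\dist(p,Z)=s$, and we may choose $q \in Z$ with $\dist(q,p)=s$ together with a unit-speed minimizing geodesic $\gamma \colon [0,s] \to \Omega$ from $q$ to $p$.

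Next, introduce a ``parallel-transport'' map $\Phi$ replacing the Euclidean translation. For $x$ in a small neighborhood of $q$, let $V(x)$ be the parallel transport of $\gamma'(0)$ along the minimizing geodesic from $q$ to $x$, and set $\Phi(x) := \exp_x(s\,V(x))$. Then $|V|=1$ gives $\dist(x,\Phi(x)) \le s$, so $x \in Z$ implies $\Phi(x) \in Z(s)$; and $\Phi(q)=p$. Consequently $g := f \circ \Phi$ has a local maximum on $Z$ at $q$, and the hypothesis that $Z$ is an $(m,h)$-set yields
\[
   \trm(D^2 g(q)) \le h\,|Dg(q)|.
\]

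It remains to translate this inequality back to $f$ at $p$ via the Jacobi field calculus. The first variation of $\Phi$ identifies $d\Phi_q(X) = J_X(s)$, where $J_X$ is the Jacobi field along $\gamma$ with $J_X(0)=X$ and $J_X'(0)=0$ (the vanishing initial derivative comes from $V$ being parallel along geodesics through $q$). A second-order expansion along $u \mapsto \exp_q(uX)$ gives
\[
  D^2g(q)(X,X) = D^2f(p)\bigl(J_X(s),J_X(s)\bigr) + Df(p) \cdot A_X(s),
\]
where $A_X$ is the ``second-variation'' field along $\gamma$ satisfying $A_X(0) = 0$ and $\nabla_{\gamma'}A_X(0) = R(\gamma'(0),X)X$. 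Choosing an orthonormal basis $\{e_1,\dots,e_{n-1}\}$ of $\gamma'(0)^\perp$ whose parallel transport diagonalizes the restriction of $D^2f(p)$ to $\gamma'(s)^\perp$, and summing over the $m$ directions realizing the minimum trace, the curvature hypothesis $\operatorname{sec} \ge K$ together with the identity $\langle R(\gamma'(0),e_i)e_i,\gamma'(0)\rangle = K(e_i,\gamma'(0)) \ge K$ produces (by Rauch comparison for the Jacobi-pullback term and integration of the ODE for $A_X$)
\[
  \trm(D^2g(q)) \ge \trm(D^2f(p)) + mKs\,\bigl\langle \nabla f(p), \gamma'(s)\bigr\rangle.
\]
At a smooth boundary point of $Z(s)$, $\nabla f(p) = |Df(p)|\gamma'(s)$ (since it lies in the outward normal cone), and Rauch also gives $|Dg(q)| \le |Df(p)|$; combining these yields $\trm(D^2 f(p)) \le (h-mKs)|Df(p)|$, contradicting the assumption.

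The main obstacle is the Jacobi-field / Rauch comparison estimate: showing rigorously that the combined correction from the bilinear pullback $D^2 f(p)(J_X(s),J_X(s))$ and the acceleration $Df(p) \cdot A_X(s)$, summed over the $m$ optimal perpendicular directions, dominates $mKs|Df(p)|$ uniformly for $s$ within the injectivity radius (not merely to first order in $s$); and handling the cut-locus case, where $p$ admits several minimizing geodesics from $Z$ and $\nabla f(p)$ may lie strictly in the interior of the normal cone. The latter is dealt with by selecting $\gamma$ so as to maximize $\langle \nabla f(p),\gamma'(s)\rangle$, or by a limiting argument from the smooth-boundary case.
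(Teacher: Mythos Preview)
Your key displayed inequality
\[
  \trm(D^2g(q)) \ge \trm(D^2f(p)) + mKs\,\bigl\langle \nabla f(p), \gamma'(s)\bigr\rangle
\]
is not merely hard to justify: it is \emph{false}. Take $\Omega=S^2$ with the round metric ($K=1$), $m=1$, $Z$ the equator (a $(1,0)$ set), and $f=\cos\theta$ the height function. Let $p$ be a point at colatitude $\pi/2-s$ and $q$ its foot on the equator. Both eigenvalues of $D^2f(p)$ equal $-\sin s$, so $\trm(D^2f(p))=-\sin s$ and $|Df(p)|=\cos s$. For your map $\Phi$, parallel transport of $\gamma'(0)$ along the equator is again $-\partial_\theta$, so $\Phi$ sends the equator to the circle of constant colatitude $\pi/2-s$; hence $g=f\circ\Phi$ is constant along the equator near $q$, giving $D^2g(q)(\hat\phi,\hat\phi)=0$, while along $\gamma$ one computes $D^2g(q)(\gamma'(0),\gamma'(0))=-\sin s$. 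Thus $\trm(D^2g(q))=-\sin s$, and your inequality reads $-\sin s \ge -\sin s + s\cos s$, i.e.\ $0\ge s\cos s$. Worse, in this example $\trm(D^2g(q))-h|Dg(q)|=-\sin s$ while $\trm(D^2f(p))-(h-mKs)|Df(p)|=-\sin s+s\cos s$, so \emph{no} inequality comparing these two quantities can go in the direction you need. The map $\Phi$ simply does not transport the $(m,h)$ condition correctly; Rauch comparison will not save it. (The ``$|Dg(q)|\le|Df(p)|$'' claim also fails in general when $\nabla f(p)$ has a component perpendicular to $\gamma'(s)$ and $K<0$.)

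The paper avoids this obstruction by passing through the barrier characterization. By the converse to the barrier principle (Theorem~\ref{converse-theorem}), it suffices to show that whenever $N\supset Z(s)$ has smooth boundary and $p\in Z(s)\cap\partial N$, one has $H_m(\partial N,p)\le h-mKs$. After a small outward perturbation $N'\supset N$ making the signed distance to $\partial N'$ smooth along the minimizing geodesic from $p$ to $q\in Z$, one sets $N^*=\{x:\dist(x,\partial N')\ge s\}$; then $q\in\partial N^*$ and $Z\subset N^*$. The Riccati equation for the second fundamental form of level sets of a distance function (Proposition~\ref{eigenvalue-proposition}) gives $\kappa_i(q)\ge\kappa_i(p)+Ks$ for each principal curvature (and $H(q)\ge H(p)+\rho s$ in the Ricci case), hence $H_m(\partial N^*,q)\ge H_m(\partial N',p)+mKs$. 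Since $Z$ is an $(m,h)$ set, the barrier principle (Theorem~\ref{barrier-theorem}) bounds $H_m(\partial N^*,q)\le h$, and the perturbation is then sent to zero. The point is that the parallel hypersurfaces of a \emph{fixed} barrier $\partial N'$ obey a clean monotone Riccati estimate, whereas your $\Phi$, which pushes each point along a direction determined at a single base point $q$, does not generate such a family.
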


\begin{proof}
If $\dim(\Omega)=m$, then by the constancy Theorem~\ref{constancy-theorem}, 
 $Z$ is either all of $\Omega$ or the empty set, in either of which cases
the theorem is trivially true.   Thus we suppose that $\dim(\Omega)>m$.

Let $N$ be a closed region in $\Omega$ with smooth boundary such that $Z(s)\subset N$
and such that $p\in Z(s)\cap \partial N$.  By Theorem~\ref{converse-theorem}, it suffices to show that
\[
    H_m(\partial N, p) \le h - mKs.
\]
in case \eqref{sectional-case} or
\[
   H_m(\partial N,p)\le h- \rho s
\]
in case \eqref{ricci-case}.
Let $q$ be a point in $Z$ such that $\dist(q,p)=s$.   Let $\Gamma$ be the geodesic joining $p$ to $q$.
Note that the signed distance function $\dist(\cdot, \partial N)$ will be smooth on an open set containing $\Gamma\setminus\{q\}$, but that it may not be smooth at $q$.

We get around that lack of smoothness 
as follows.  Note that for each $\eps>0$, we can find a closed region $N'\subset \Omega$ with
smooth boundary such that 
\begin{enumerate}[\upshape (1)]
\item\label{containment} $N\subset N'$, 
\item\label{touching} $N\cap \partial N'=\{p\}$, 
\item
 the principal directions of $\partial N$ at $p$ are also principal directions of $\partial N'$ at $p$, 
\item\label{non-osculating}
  each principal curvature of $\partial N'$ at $p$ is strictly less than the corresponding principal curvature of 
  $\partial N$ at $p$,
\item\label{curvatures-close}
 each principal curvature of $\partial N'$ at $p$ is within $\eps$ of the the corresponding principal
curvature of $N$ at $p$.
\end{enumerate}
By~\eqref{curvatures-close},
\begin{equation}\label{H_ms-close}
H_m(\partial N',p) \ge H_m(\partial N, p) -m\eps.
\end{equation}
By~\eqref{non-osculating}, the function $f(\cdot):=\dist(\cdot, \partial N')$
 is smooth on an open subset of $N'$ containing $\Gamma$.
In particular, if 
\[
   N^* = \{x\in N': \dist(x,\partial N')\ge s\}
\]
then $q\in \partial N^*$ and $\partial N^*$ is smooth near $q$.

It follows (Proposition~\ref{eigenvalue-proposition}) that each principal curvature of $\partial N^*$ at $q$
is greater than or equal to $Ks$ plus the corresponding principal curvature of $\partial N'$ at $p$,
and thus (taking the sum of the first $m$ principal curvatures) that
\[
  H_m(\partial N^*,q)  \ge H_m(\partial N',p) + mKs.
\]
Since $Z\subset N^*$ and since $Z$ is an $(m,h)$ set, the left side of this inequality is at most $h$
(by the barrier principle~\ref{barrier-theorem}), so
\begin{align*}
   h &\ge H_m(\partial N',p) + mKs
   \\
   &\ge H_m(\partial N,p) - m\eps + mKs
\end{align*}
by~\eqref{H_ms-close}.  Since  $\eps>0$ can be arbitrarily small, this implies
that $h\ge H_m(\partial N,p) + mKs$ or
\[
  H_m(\partial N,p) \le h - mKs,
\]
from which it follows (Theorem~\ref{converse-theorem}) that $Z(s)$ is an $(m,h-mKs)$ set.

If $\dim(\Omega)=m+1$, then (letting $N$, $N'$, and $N^*$ be as above)
\[
  H_m(\partial N^*,q) \ge H_m(\partial N',p) + \rho s
\]
by Proposition~\ref{eigenvalue-proposition}.
Arguing exactly as above with $\rho$ in place of $mK$, we conclude
that
\[  
   H_m(\partial N, p) \le h - \rho s,
\]
from which it follow that $Z(s)$ is an $(m, h-\rho s)$ set.
\end{proof}

\section{Appendix: Tubular Neighborhoods}

For the reader's convenience, we give  the basic facts about
the second fundamental form of the level sets of the distance function
to a smooth hypersurface.  (These facts were used in Section~\ref{distance-section}.)

\begin{lemma}\label{tubular-lemma}
Let $M$ be a two-sided, smoothly embedded hypersurface in 
a smooth, $(n+1)$-dimensional Riemannian manifold $N$, 
let $f:N\to\RR$ be the signed distance function to $M$, 
and let $\Omega$ be an open subset of $N$ on which $f$ is smooth
with nonvanishing gradient.
For $p\in \Omega$, let 
\[
   M_p := \{x: f(x)=f(p)\}
\]
be the level set of $f$ containing $p$, and let $B_p$ be the second
fundamental form of $M_p$ at $p$ with respect to the unit normal $\nu(p):=\nabla f(p)$.
Then
\[
  (\D_\nu B) (\cdot,\cdot) = \Rr(\cdot,\nu,\cdot,\nu) + \sum_{k=1}^n B(\cdot,\ee_k)B(\ee_k,\cdot).
\]
where $R$ is the curvature tensor of $N$ and where 
 $\ee_1,\dots,\ee_n$ are unit vectors orthogonal to each other and to $\nu$.
\end{lemma}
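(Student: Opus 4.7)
My plan is to exploit the fact that the unit normal $\nu = \grad f$ is a geodesic vector field on $\Omega$, derive a Riccati-type equation for the shape operator $S(X) := \grad_X \nu$, and then convert it into the asserted identity for $B(X,Y) = \langle S(X),Y\rangle$.

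First I would verify the two key properties of $\nu$. Since $f$ is a signed distance function, $|\grad f|\equiv 1$ on $\Omega$. Differentiating $|\nu|^2 = 1$ gives $\langle \grad_X \nu, \nu\rangle = 0$ for every $X$, so $S(X)$ is always tangent to the level set. Moreover, because $\nu$ is a gradient, the Hessian of $f$ is symmetric, hence $\langle \grad_\nu \nu, X\rangle = \langle \grad_X \nu, \nu\rangle = 0$ for every $X$, giving $\grad_\nu\nu = 0$ (the integral curves of $\nu$ are unit-speed geodesics orthogonal to the level sets).

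Next I would fix $X, Y \in T_p M_p$ and extend them near $p$ to vector fields $\tilde X, \tilde Y$ that are invariant under the flow of $\nu$, i.e.\ $[\nu,\tilde X] = [\nu,\tilde Y] = 0$; since the flow of $\nu$ carries level sets of $f$ to level sets, the extensions remain tangent to the $M_{(\cdot)}$'s, so $B(\tilde X, \tilde Y)$ is well defined in a neighborhood. The commutation relation gives $\grad_\nu \tilde X = \grad_{\tilde X}\nu = S(\tilde X)$, and similarly for $\tilde Y$. Then
\begin{align*}
(\grad_\nu B)(X,Y)
&= \nu\langle \grad_{\tilde X}\nu,\tilde Y\rangle - B(\grad_\nu \tilde X,\tilde Y) - B(\tilde X, \grad_\nu \tilde Y) \\
&= \langle \grad_\nu \grad_{\tilde X}\nu,\tilde Y\rangle + \langle \grad_{\tilde X}\nu, S(\tilde Y)\rangle - \langle S(S(\tilde X)),\tilde Y\rangle - \langle \grad_{\tilde X}\nu, S(\tilde Y)\rangle,
\end{align*}
in which the two middle terms cancel.

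Finally, the curvature identity applied to $\nu$, $\tilde X$, together with $\grad_\nu \nu = 0$ and $[\nu,\tilde X] = 0$, collapses to $\grad_\nu \grad_{\tilde X}\nu = \Rr(\nu,\tilde X)\nu$, so $\langle \grad_\nu\grad_{\tilde X}\nu,\tilde Y\rangle = \Rr(X,\nu,Y,\nu)$ (after the standard pair-symmetry of $\Rr$). Expanding $\langle S(S(X)),Y\rangle$ in the orthonormal frame $\ee_1,\dots,\ee_n$ tangent to $M_p$ yields $\sum_k B(X,\ee_k)B(\ee_k,Y)$, and combining these two pieces gives the asserted formula. The only real subtlety is the bookkeeping of sign conventions for $\Rr$ and $B$; all the differential-geometric content sits in the eikonal identity $\grad_\nu\nu=0$ and the curvature commutator, both of which are standard.
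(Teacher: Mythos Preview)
Your argument is correct and uses the same ingredients as the paper's proof: the eikonal identity $\nabla_\nu\nu=0$, an extension of the tangent vectors along the normal geodesics, and the curvature commutator. The one difference is the choice of extension: the paper extends $\xx,\yy$ by \emph{parallel transport} along $\nu$ (so $\nabla_\nu\xx=\nabla_\nu\yy=0$), which makes $(\nabla_\nu B)(\xx,\yy)=\nu\,B(\xx,\yy)$ immediately and pushes the quadratic-in-$B$ term into the Lie-bracket piece $\nabla_{[\nu,\xx]}\yy\cdot\nu$ of the curvature identity; you instead extend by the \emph{flow} of $\nu$ (so $[\nu,\tilde X]=0$), which kills the bracket term in the curvature identity but produces the quadratic term from the Leibniz corrections $-B(\nabla_\nu\tilde X,\tilde Y)-B(\tilde X,\nabla_\nu\tilde Y)$. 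Both are standard routes to the Riccati equation and are essentially equivalent; your remark that the only residual issue is the sign convention for $B$ and $\Rr$ is accurate (the paper takes $B(\xx,\yy)=\nabla_\xx\yy\cdot\nu$, the negative of your $\langle\nabla_X\nu,Y\rangle$).
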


\begin{proof}  
Note that the hypotheses imply that $\nu$ is a unit vectorfield and that  the integral
curves of $\nu$ are geodesics:
\begin{equation}\label{nu-derivative-zero}
  \D_\nu \nu \equiv 0.
\end{equation}
Let $\xx$ and $\yy$ be two tangent vectorfields to one of the level sets of $f$.
Extend these vectorfields by parallel transport along the integral curves of $\nu$.
Thus
\begin{equation}\label{x-and-y-derivative-zero}
  \D_\nu\xx = \D_\nu\yy = 0.
\end{equation}
Now
\[
  B(\yy,\xx)=B(\xx,\yy) = (\D_\xx\yy)\cdot \nu = -\xx\cdot \D_{\yy}\nu.
\]
By~\eqref{x-and-y-derivative-zero}, $(\D_\nu B)(\xx,\yy) = \nu(B(\xx,\yy))$.  
Thus by~\eqref{nu-derivative-zero} and~\eqref{x-and-y-derivative-zero}, 
\begin{align*}
(\D_\nu B)(\xx,\yy)
&=
\nu(\D_\xx\yy\cdot\nu)
\\
&=
(\D_\nu\D_\xx\yy)\cdot \nu + \D_\xx\yy \cdot\D_\nu\nu
\\
&=
(\D_\nu\D_\xx\yy)\cdot \nu + 0  
\\
&=
\Rr(\nu,\xx)\yy \cdot \nu + (\D_\xx\D_\nu\yy)\cdot \nu + (\D_{[\nu,\xx]}\yy)\cdot \nu
\\
&=
\Rr(\nu,\xx,\nu,\yy) + 0 + (\D_{[\nu,\xx]}\yy)\cdot \nu
\end{align*}
It remains only to show that
\begin{equation}\label{to-show}
   (\D_{[\nu,\xx]}\yy)\cdot \nu = \sum_{k=1}^n B(\xx,\ee_k) B(\ee_k,\yy).
\end{equation}
Now
\[
  [\nu,\xx] = \D_{\nu}\xx - \D_{\xx}\nu = - \D_{\xx}\nu
\]
which is orthogonal to $\nu$, and thus tangent to the level sets of $f$, so
\begin{equation}\label{Lie-bracket-term}
 (\D_{[\nu,\xx]}\yy)\cdot \nu = B([\nu,\xx], \yy) =  B(-\D_{\xx}\nu, \yy).
\end{equation}
Now
\begin{align*}
 -\D_{\xx}\nu
 &=
 \sum_{k=1}^n (-\D_{\xx}\nu\cdot \ee_k) \ee_k
 \\
 &=
 \sum_{k=1}^n B(\xx,\ee_k) \ee_k
\end{align*}
Substituting this into~\eqref{Lie-bracket-term} and using the linearity of $B(\cdot,\yy)$ gives~\eqref{to-show}.
\end{proof}

\begin{proposition}\label{eigenvalue-proposition}
Let $\Omega$, $f$, $\nu$, $B$, and $M_x$ (for $x\in\Omega$) be as in Lemma~\ref{tubular-lemma}.
Let $\kappa_1(x)\le \dots \le \kappa_n(x)$ be the principal curvatures of $M_x$
at $x$ with respect to the unit normal $\nu$.
Let $\Gamma$ be a geodesic curve perpendicular to the level sets of $f$ (i.e., an integral
curve of the vectorfield $\nu:=\nabla f$).

If $p,q\in \Gamma$ and if $f(q)>f(p)$, then
\begin{align}
\label{principal-curvature-inequality}
\kappa_i(q) &\ge \kappa_i(p) + K\dist(p,q), 
\\
\label{trm-inequality}
\trm B_q &\ge \trm B_p + mK \dist(p,q), 
\\
\label{mean-curvature-inequality}
H(q) &\ge H(p) + \rho \dist(p,q)
\end{align}
where $K$ is a lower bound for the sectional curvature of the ambient space, $\rho$ is a lower
bound for the Ricci curvature of the ambient space, and $H(x)=\operatorname{trace}B_x$
is the mean curvature of $M(x)$ at $x$ with respect to $\nu$.
\end{proposition}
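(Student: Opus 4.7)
The plan is to convert Lemma~\ref{tubular-lemma} into a matrix Riccati ODE along $\Gamma$ and then apply elementary operator-monotone comparison. Parametrize $\Gamma$ by arc length with $\Gamma(0)=p$, $\Gamma(s)=q$, pick an orthonormal basis $e_1(0),\dots,e_n(0)$ for $\nu(p)^\perp\subset T_p\Omega$, and parallel-translate along $\Gamma$. Since $\nabla_\nu\nu\equiv 0$ and $\nabla_\nu e_k\equiv 0$, the $e_k(t)$ remain orthonormal and orthogonal to $\nu(t)$, hence tangent to $M_{\Gamma(t)}$. Define the symmetric matrix $B(t)$ by $B(t)_{ij}=B_{\Gamma(t)}(e_i(t),e_j(t))$. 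Because the frame is parallel along $\Gamma$, the left side of the identity in Lemma~\ref{tubular-lemma} is simply $B'(t)$, so the lemma reads
\[
    B'(t)=\mathcal{R}(t)+B(t)^2,
\]
where $\mathcal{R}(t)_{ij}=\Rr(e_i(t),\nu(t),e_j(t),\nu(t))$.

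First I would prove \eqref{principal-curvature-inequality}. The matrix $B(t)^2$ is positive semidefinite, and under the sectional-curvature hypothesis $\langle\mathcal{R}(t)X,X\rangle=\Rr(X,\nu,X,\nu)\ge K|X|^2$ for $X\perp\nu$, so $\mathcal{R}(t)\succeq K\,I$ in the operator order. Hence $B'(t)\succeq K\,I$, and integrating the matrix inequality over $[0,s]$ gives $B(s)-B(0)\succeq Ks\,I$. By the Weyl min-max characterization, the ordered eigenvalues of a symmetric matrix are monotone under adding a positive semidefinite matrix, so $\kappa_i(q)\ge\kappa_i(p)+K\,\dist(p,q)$. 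Summing over $i=1,\dots,m$ immediately yields \eqref{trm-inequality}.

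For \eqref{mean-curvature-inequality}, I would take the trace of the Riccati equation: $H'(t)=\operatorname{trace}(\mathcal{R}(t))+|B(t)|^2$. Tracing $\mathcal{R}(t)$ against an orthonormal basis of $\nu(t)^\perp$ gives $\sum_k\Rr(e_k,\nu,e_k,\nu)=\operatorname{Ric}(\nu,\nu)$ (the missing $\Rr(\nu,\nu,\nu,\nu)$ term vanishes), which is $\ge\rho$ by hypothesis, while $|B(t)|^2\ge 0$. Integrating over $[0,s]$ delivers $H(q)-H(p)\ge\rho\,\dist(p,q)$.

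There is no genuine obstacle: the content is all contained in Lemma~\ref{tubular-lemma}, after which the proposition reduces to standard Riccati comparison plus Weyl's inequality. The only bookkeeping points are (a) confirming that parallel transport of $\nu$ and of the frame keeps the $e_k(t)$ tangent to the level sets so that $B(t)$ genuinely represents the shape operator in a fixed basis, and (b) keeping the sign conventions straight so that the $+B(t)^2$ term in the Riccati equation is the correct nonnegative one (which it is, because $\nu=\nabla f$ points in the direction of increasing $f$, i.e., toward the "convex" side of later level sets).
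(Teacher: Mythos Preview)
Your proposal is correct and follows essentially the same approach as the paper: parallel-transport an orthonormal frame along $\Gamma$, use Lemma~\ref{tubular-lemma} to obtain the Riccati identity, drop the nonnegative $B^2$ term, bound the curvature term below by $K\,I$ (respectively, take the trace and bound by $\rho$), integrate, and invoke the min--max characterization of eigenvalues. The paper phrases the argument in the language of quadratic forms on the space $V$ of parallel normal fields rather than matrices, but the content is identical.
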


\begin{proof}
Let $V$ be the space of normal vectorfields $\vv$ on $\Gamma$ such that $\D_\nu\vv\equiv 0$.
We may regard $B(q)$ and $B(p)$ as both being symmetric bilinear forms on $V$.
(In effect, we are identifying $\Tan_pM_p$ and $\Tan_qM_q$ by parallel transport along $\Gamma$.)

Let $\vv$ be a vectorfield in $V$.
Then $\D_\nu (B(\vv,\vv))= (\D_\nu B)(\vv,\vv)$, so by Lemma~\ref{tubular-lemma},
\begin{equation}\label{B-inequality}
\D_\nu (B(\vv,\vv)) 
\ge
\Rr(\nu,\vv,\nu,\vv).
\end{equation}
and thus
\[
\D_\nu (B(\vv,\vv)) \ge K \|\vv\|^2.
\]
Integrating from $p$ to $q$ gives
\[
   B_q(\vv,\vv) \ge B_p(\vv,\vv) + K \dist(p,q) \|\vv\|^2
\]
Now~\eqref{principal-curvature-inequality}
 follows from the Rayleigh quotient characterization of the eigenvalues of $B_x$, i.e., the principal
curvatures.  (See Lemma~\ref{rayleigh-lemma} below.)

Summing from $i=1$ to $m$ in~\eqref{principal-curvature-inequality} gives~\eqref{trm-inequality}.

To prove~\eqref{mean-curvature-inequality}, let $\ee_1,\dots, \ee_n$ 
be an orthonormal set of vectorfields in $V$.  Then by~\eqref{B-inequality},
\[
  \D_\nu (B(\ee_i,\ee_i)) \ge \Rr(\nu,\ee_i,\nu,\ee_i).
\]
Summing from $i=1$ to $n$ gives
\[
   \D_\nu h \ge \operatorname{Ricci}(\nu,\nu) \ge \rho.
\]
Assertion~\eqref{mean-curvature-inequality} follows by integrating from $p$ to $q$.
\end{proof}

\begin{lemma}\label{rayleigh-lemma}
Let $Q$ and $Q'$ be symmetric bilinear forms on a Euclidean space $V$.
Suppose $Q(\vv,\vv)\le Q'(\vv,\vv)$ for all unit vectors $\vv$.  Then each eigenvalue
of $Q$ is less than or equal to the corresponding eigenvalue of $Q'$.
\end{lemma}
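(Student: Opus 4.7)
The plan is to prove this by invoking the Courant--Fischer min-max characterization of the eigenvalues of a symmetric bilinear form. Label the eigenvalues of $Q$ and $Q'$ in increasing order as $\lambda_1(Q)\le\dots\le\lambda_n(Q)$ and $\lambda_1(Q')\le\dots\le\lambda_n(Q')$, where $n=\dim V$. The claim is that $\lambda_i(Q)\le \lambda_i(Q')$ for every $i$.

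The key step is the identity
\[
  \lambda_i(Q) \;=\; \min_{\substack{W\subset V \\ \dim W = i}} \;\max_{\substack{\vv\in W \\ \|\vv\|=1}} Q(\vv,\vv),
\]
with the analogous formula for $Q'$. This is standard: it follows by diagonalizing $Q$ with respect to the Euclidean inner product and writing an arbitrary unit vector as a linear combination of the eigenvectors; the min is realized by the span of the first $i$ eigenvectors, and any $i$-dimensional subspace must intersect the span of the last $n-i+1$ eigenvectors, forcing the max to be at least $\lambda_i(Q)$.

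Given this identity, the hypothesis $Q(\vv,\vv)\le Q'(\vv,\vv)$ for all unit $\vv$ implies, for any fixed $i$-dimensional subspace $W$, that
\[
  \max_{\vv\in W,\,\|\vv\|=1} Q(\vv,\vv) \;\le\; \max_{\vv\in W,\,\|\vv\|=1} Q'(\vv,\vv).
\]
Taking the minimum of both sides over all $i$-dimensional subspaces $W\subset V$ yields $\lambda_i(Q)\le \lambda_i(Q')$, which is the desired conclusion.

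There is really no obstacle here: the whole argument is the pointwise monotonicity of $W\mapsto \max_{\vv\in W,\,\|\vv\|=1} Q(\vv,\vv)$ combined with the min-max formula. The only thing to be careful about is that the eigenvalues are ordered the same way for both forms, which is what ``corresponding eigenvalue'' is understood to mean. The argument works verbatim in either direction (using the dual max-min formula over $(n-i+1)$-dimensional subspaces gives the same inequality).
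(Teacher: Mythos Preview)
Your proof is correct and essentially identical to the paper's: both invoke the Courant--Fischer (Rayleigh quotient) min-max formula $\lambda_k(Q)=\inf_{\dim W=k}\sup_{w\in W,\,|w|=1}Q(w,w)$ and observe that the pointwise inequality $Q\le Q'$ passes through the inner $\sup$ and then the outer $\inf$. The paper simply states the formula and says the result follows immediately, whereas you spell out the two monotonicity steps.
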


\begin{proof} This follows immediately from the Rayleigh quotient characterization of the
eigenvalues:
\[
   \lambda_k(Q) = \inf_{W\in G(k,V)\vphantom{\hat\hat N}} \left( \sup_{w\in W,\, |w|=1} Q(w,w) \right),
\]
where $G(k,V)$ is the set of $k$-dimensional linear subspaces of $V$.
\end{proof}

\newcommand{\hide}[1]{}

\begin{bibdiv}

\begin{biblist}

\bib{allard-first-variation}{article}{
  author={Allard, William K.},
  title={On the first variation of a varifold},
  journal={Ann. of Math. (2)},
  volume={95},
  date={1972},
  pages={417--491},
  issn={0003-486X},
  review={\MR {0307015},
  Zbl 0252.49028.}}
  \hide{(46 \#6136)}
  
\bib{allard-boundary}{article}{
   author={Allard, William K.},
   title={On the first variation of a varifold: boundary behavior},
   journal={Ann. of Math. (2)},
   volume={101},
   date={1975},
   pages={418--446},
   issn={0003-486X},
   review={\MR{0397520 (53 \#1379)}},
}

\bib{fischer-colbrie-schoen}{article}{
   author={Fischer-Colbrie, Doris},
   author={Schoen, Richard},
   title={The structure of complete stable minimal surfaces in $3$-manifolds
   of nonnegative scalar curvature},
   journal={Comm. Pure Appl. Math.},
   volume={33},
   date={1980},
   number={2},
   pages={199--211},
   issn={0010-3640},
   review={\MR{562550 (81i:53044)}},
   doi={10.1002/cpa.3160330206},
}
 
\bib{hoffman-meeks-halfspace}{article}{
   author={Hoffman, D.},
   author={Meeks, W. H., III},
   title={The strong halfspace theorem for minimal surfaces},
   journal={Invent. Math.},
   volume={101},
   date={1990},
   number={2},
   pages={373--377},
   issn={0020-9910},
   review={\MR{1062966 (92e:53010)}},
   doi={10.1007/BF01231506},
}

\bib{hoffman-white-genus-one}{article}{
   author={Hoffman, David},
   author={White, Brian},
   title={Genus-one helicoids from a variational point of view},
   journal={Comment. Math. Helv.},
   volume={83},
   date={2008},
   number={4},
   pages={767--813},
   issn={0010-2571},
   review={\MR{2442963 (2010b:53013)}},
}

\bib{ilmanen-generalized-flow-of-sets}{article}{
   author={Ilmanen, Tom},
   title={Generalized flow of sets by mean curvature on a manifold},
   journal={Indiana Univ. Math. J.},
   volume={41},
   date={1992},
   number={3},
   pages={671--705},
   issn={0022-2518},
   review={\MR{1189906 (93k:58057)}},
   doi={10.1512/iumj.1992.41.41036},
}

\bib{simon-book}{book}{
  author={Simon, Leon},
  title={Lectures on geometric measure theory},
  series={Proceedings of the Centre for Mathematical Analysis, Australian National University},
  volume={3},
  publisher={Australian National University Centre for Mathematical Analysis},
  place={Canberra},
  date={1983},
  pages={vii+272},
  isbn={0-86784-429-9},
  review={\MR {756417},
  Zbl 0546.49019.}
}  \hide{ (87a:49001)}

\bib{SolomonWhite}{article}{
  author={Solomon, Bruce},
  author={White, Brian},
  title={A strong maximum principle for varifolds 
  that are stationary with respect to even parametric elliptic functionals},
  journal={Indiana Univ. Math. J.},
  volume={38},
  date={1989},
  number={3},
  pages={683--691},
  issn={0022-2518},
  review={\MR {1017330},
  Zbl 0711.49059.} 
}  \hide{ (90i:49052)} 

\bib{white-size}{article}{
   author={White, Brian},
   title={The size of the singular set in mean curvature flow of mean-convex
   sets},
   journal={J. Amer. Math. Soc.},
   volume={13},
   date={2000},
   number={3},
   pages={665--695 (electronic)},
   issn={0894-0347},
   review={\MR{1758759 (2001j:53098)}},
   doi={10.1090/S0894-0347-00-00338-6},
}

\bib{white-local-regularity}{article}{
   author={White, Brian},
   title={A local regularity theorem for mean curvature flow},
   journal={Ann. of Math. (2)},
   volume={161},
   date={2005},
   number={3},
   pages={1487--1519},
   issn={0003-486X},
   review={\MR{2180405 (2006i:53100)}},
   doi={10.4007/annals.2005.161.1487},
}

\bib{white-isoperimetric}{article}{
   author={White, Brian},
   title={Which ambient spaces admit isoperimetric inequalities for
   submanifolds?},
   journal={J. Differential Geom.},
   volume={83},
   date={2009},
   number={1},
   pages={213--228},
   issn={0022-040X},
   review={\MR{2545035}},
}

\bib{White-maximum}{article}{
   author={White, Brian},
   title={The maximum principle for minimal varieties of arbitrary
   codimension},
   journal={Comm. Anal. Geom.},
   volume={18},
   date={2010},
   number={3},
   pages={421--432},
   issn={1019-8385},
   review={\MR{2747434 (2012a:53117)}},
}

\end{biblist}

\end{bibdiv}	

\end{document}